\newtheorem{thm}{Theorem}[section]
\newtheorem{ex}[thm]{Example}
\newtheorem{lem}[thm]{Lemma}   
\newtheorem{prop}[thm]{Proposition}
\newtheorem{defn}[thm]{Definition}
\newtheorem{rmrk}[thm]{Remark}   
\newtheorem{question}[thm]{Open Question}   
\newcommand{\be}{\begin{equation}}
\newcommand{\ee}{\end{equation}}
\newcommand{\bee}{\begin{equation*}}
\newcommand{\eee}{\end{equation*}}
\newcommand{\N}{\mathbb{N}}
\newcommand{\R}{\mathbb{R}}
\newcommand{\E}{\mathbb{E}}
\newcommand{\HH}{\mathbb{H}}
\newcommand{\Z}{\mathbb{Z}}
\newcommand{\vare}{\varepsilon}
\newcommand{\union}{\cup}
\newcommand{\disjointunion}{\sqcup}
\newcommand{\diam}{\operatorname{Diam}}
\newcommand{\vol}{\operatorname{Vol}}
\newcommand{\Hto}{\stackrel { \textrm{H}}{\longrightarrow} }
\newcommand{\GHto}{\stackrel { \textrm{GH}}{\longrightarrow} }
\begin{document}

\title{Sequences of Open Riemannian Manifolds with Boundary}

\author{Raquel Perales}
\thanks{The first author is a doctoral student at Stony Brook.}
\address{SUNY at Stony Brook}
\email{praquel@math.sunysb.edu}

\author{Christina Sormani}
\thanks{The second author's research is partially supported by 
NSF DMS 10060059.}
\address{CUNY Graduate Center and Lehman College}
\email{sormanic@member.ams.org}

\keywords{}
\maketitle

\begin{abstract}
We consider sequences of open Riemannian manifolds
with boundary that have no regularity conditions on the boundary.  To
define a reasonable notion of a limit of such a sequence, 
we examine ``$\delta$ inner regions" which avoid the 
boundary by a distance $\delta$.   We prove Gromov-Hausdorff
compactness theorems for sequences of these
``$\delta$ inner regions".  We then build ``glued limit spaces"
out of the Gromov-Hausdorff limits of these $\delta$ interior regions 
and study the properties of these glued limit spaces. 
Our main applications assume the sequence is noncollapsing and
has nonnegative Ricci curvature.   We
include open questions.
\end{abstract}

\section{Introduction}

Recall that Gromov's Ricci Compactness Theorem states that a
sequence of compact
Riemannian manifolds with nonnegative Ricci curvature
and a uniform upper bound on diameter has a subsequence which
converges in the Gromov-Hausdorff sense to a metric space
\cite{Gromov-metric}.   When the sequence of manifolds
is noncollapsing, then the Gromov-Hausdorff limit spaces have
a variety of properties, particularly restrictions on their metrics, 
their Hausdorff measures, and their topologies.  These properties 
were proven by 
Cheeger, Colding, Naber, Wei and the second author
(c.f. \cite{ChCo-PartI}, 
\cite{Colding-volume}, \cite{Colding-Naber-2} and \cite{SorWei1}).

Here we consider an open Riemannian manifold, $(M^m,g)$,
endowed with the length metric, $d_M$,
as in (\ref{length-metric}).  We
define the boundary to be
\be \label{defn-bndry}
\partial M = \bar{M}\setminus M
\ee
where $\bar{M}$ is the metric completion of $M$. For example,
$(M^m, g)$ may be a smooth manifold with boundary.  However,
we do not require any smoothness conditions on this boundary.

First observe that Gromov's Ricci Compactness Theorem
does not hold for precompact open manifolds with boundary
that have a uniform upper bound on diameter even if they are
flat and two dimensional:

\begin{ex} \label{gold-foils}
The $j$-fold covering spaces, $M_j$, of the 
annulus, $Ann_{0}(1/j,1)\subset \E^2$,
depicted in Figure~\ref{fig-golden-foils},
are flat surfaces such that
\be
\diam(M_j)\le 2+\pi \textrm{ and } \vol(M_j)= j (\pi - \pi (1/j)^2).
\ee
See Remark~\ref{rmrk-gold-foils} for the proof that
there is no subsequence of these
spaces with a Gromov-Hausdorff limit.   
\end{ex}

\begin{figure}[htbp] 
   \centering
   \includegraphics[width=3.5in]{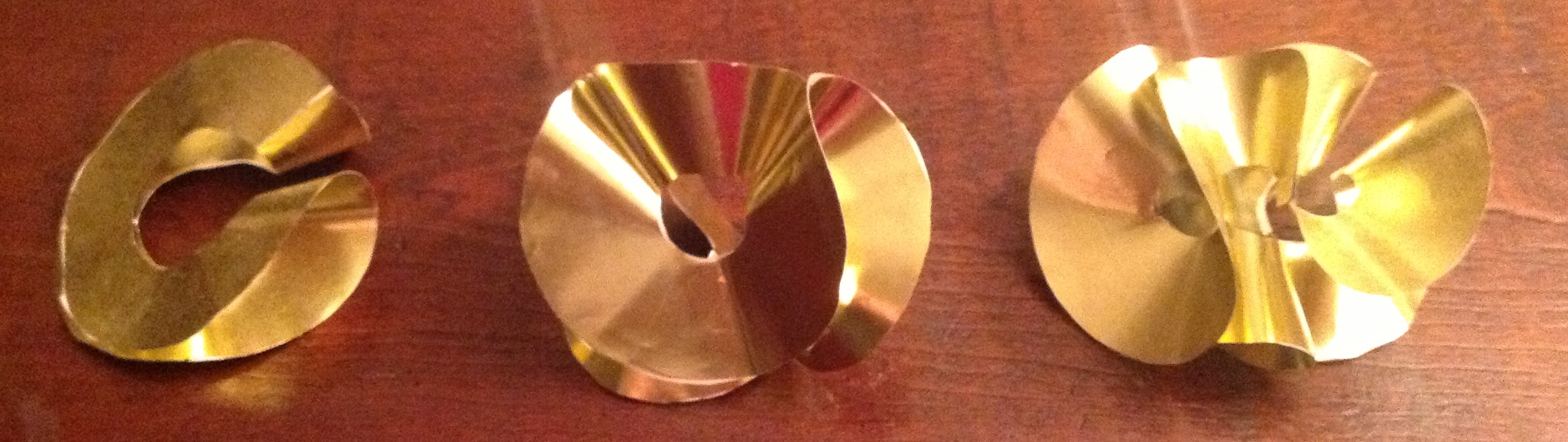} 
 \caption{Models of Example~\ref{gold-foils}: $M_2$, $M_3$, $M_4$...}
   \label{fig-golden-foils}
\end{figure}

Assuming both a uniform upper bound on volume and diameter,
we still do not have Gromov-Hausdorff compactness:

\begin{ex} \label{many-splines}
The smooth regions, $M_j\subset \E^2$, 
with many splines 
depicted in Figure~\ref{fig-many-splines}
have no subsequence with a Gromov-Hausdorff limit.  See Example~\ref{ex-many-splines} for details.   
\end{ex}

\begin{figure}[htbp] 
   \centering
   \includegraphics[width=3.5in]{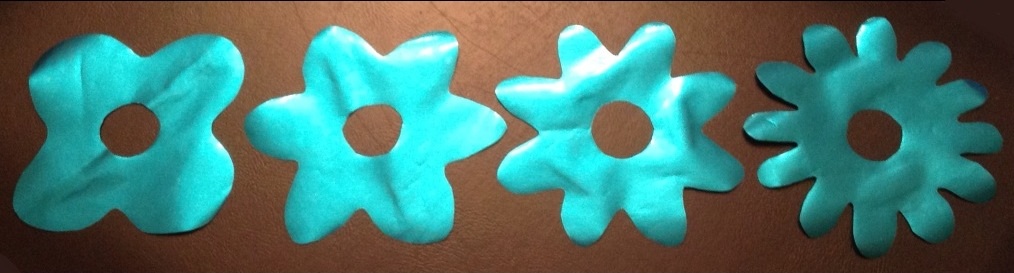} 
 \caption{Models of Example~\ref{many-splines}: $M_4$, $M_6$, $M_8$, $M_{12}$...}
   \label{fig-many-splines}
\end{figure}

Kodani \cite{Kodani-1990}, Anderson-Katsuda-Kurylev-Lussas-Taylor \cite{AKKLT}, Wong \cite{Wong-2008} and, most recently, Knox \cite{Knox}
have proven compactness theorems for sequences of Riemannian
manifolds with boundary assuming curvature controls on the boundary.   A survey of these results has been written by the first author \cite{Perales-survey}.
Since we do not wish to assume the boundary is smooth,
we prove compactness theorems
for regions which avoid the boundary [Theorem~\ref{main-thm}].
We then glue together the limits of these regions
[Theorem~\ref{def-glue}]
and prove these glued limit spaces have nice properties
[Theorem~\ref{thm-ricci-glued-lim-measure}].   

\begin{defn} \label{defn-1}
Given an open Riemannian manifold, $(M, g_M)$, and a positive 
$\delta>0$, we define the $\delta$ inner region as follows
\be
M^\delta=\Big\{ x\in M: \,\, d_M(x, \partial M) > \delta\Big\}
\ee
where $\partial M$ is defined as in (\ref{defn-bndry}),
\be \label{length-metric}
d_M(x,y) := \inf\Big\{ L_g(C): \,\, C:[0,1]\to M, \, C(0)=x, \, C(1)=y \Big\}
\ee
and 
\be
L_g(C)=\int_0^1 g(C'(t),C'(t))\, dt.
\ee
\end{defn}

Note that there are two metrics 
on the $\delta$ interior region, $M^\delta$: the restricted metric, $d_M$,
and the induced length metric, 
\be \label{intrinsic-distance}
d_{M^\delta} (x,y) :=\inf\Big\{ L_g(C): \,\, C:[0,1]\to M^\delta, \, C(0)=x, \, C(1)=y \Big\}.
\ee
Note that $d_{M^\delta}$ is only defined between points in
connected components of $M^\delta$.   The intrinsic diameter
\be \label{intrinsic-delta-diameter}
\diam(M^\delta, d_{M^\delta})=\sup\left\{d_{M^\delta}(x,y): \, x,y\in M^\delta\right\},
\ee
will be infinite if $M^\delta$ is not connected by rectifiable paths.

\begin{thm}\label{main-thm}
Given $m\in \N$, $\delta>0$,  $D>0$, $V>0$, and $\theta>0$, set
$
\mathcal{M}^{m, \delta, D, V}_{\theta}  
$
to be the class of open Riemannian manifolds, $M$, with boundary
of dimension $m$,
with nonnegative Ricci curvature, $\vol(M)\le V$, and
\be \label{diam-here}
\diam(M^\delta,{d_{M^\delta}})
\le D,
\ee
that are noncollapsing at a point,
\be \label{V-epsilon}
\exists q\in M^\delta
\textrm{ such that } \vol(B_q(\delta)) \ge \theta\delta^m.
\ee
If $(M_j, g_j) \subset \mathcal{M}^{m, \delta, D, V}_{\theta}$, then
there is a subsequence, $(M^\delta_{j_k}, d_{M_{j_k}})$, such that the
metric completions
with the restricted metric, $d_{M_j}$,
converge in the Gromov-Hausdorff sense to a metric space,
$(Y^\delta,d)$.  
\end{thm}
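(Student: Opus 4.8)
The plan is to verify Gromov's precompactness criterion for the family $(\overline{M^\delta_j},d_{M_j})$: it suffices to establish a uniform bound on the diameters together with, for each $\varepsilon>0$, a bound $N(\varepsilon)$ \emph{independent of $j$} on the number of $d_{M_j}$-balls of radius $\varepsilon$ needed to cover $\overline{M^\delta_j}$; Gromov's theorem then produces a subsequence converging in the Gromov--Hausdorff sense to a compact metric space $(Y^\delta,d)$. Two preliminary observations are immediate. Since every rectifiable curve in $M^\delta_j$ is also a curve in $M_j$, we have $d_{M_j}\le d_{M^\delta_j}$ on $M^\delta_j$, whence $\diam(M^\delta_j,d_{M_j})\le\diam(M^\delta_j,d_{M^\delta_j})\le D$ by (\ref{diam-here}). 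And a $d_{M_j}$-Cauchy sequence in $M^\delta_j$ converges, in the complete space $\overline{M_j}$, to a point at distance at least $\delta>0$ from $\partial M_j$ and hence in $M_j$; so $\overline{M^\delta_j}$ is the closure of $M^\delta_j$ inside $M_j$, it stays a definite distance from $\partial M_j$, and its diameter is $\le D$.

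The crux is a uniform lower volume bound for small interior balls: I would show there is a positive function $\rho\mapsto v(\rho)$, $0<\rho\le\delta/2$, depending only on $m,\delta,D,\theta$, such that $\vol(B_p(\rho))\ge v(\rho)$ for every $j$, every $p\in M^\delta_j$, and every such $\rho$ (here $B_p(\rho)$ is the $d_{M_j}$-ball in $M_j$). The idea is to spread the single noncollapsed ball at $q_j$ given by (\ref{V-epsilon}) over all of $M^\delta_j$. Fix $p\in M^\delta_j$. By (\ref{diam-here}), $d_{M^\delta_j}(q_j,p)\le D$, so there is a curve in $M^\delta_j$ from $q_j$ to $p$ of length at most $D+\delta$; subdividing it produces points $q_j=x_0,x_1,\dots,x_K=p$ of $M^\delta_j$ with $d_{M_j}(x_i,x_{i+1})<\delta/3$ and $K$ bounded solely in terms of $D/\delta$. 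From $\vol(B_{q_j}(\delta))\ge\theta\delta^m$ and Bishop--Gromov at $q_j$ we get $\vol(B_{x_0}(\delta/2))\ge\theta(\delta/2)^m$; and since $B_{x_i}(\delta/2)\subset B_{x_{i+1}}(5\delta/6)$ with $5\delta/6<\delta$, Bishop--Gromov at $x_{i+1}$ gives $\vol(B_{x_{i+1}}(\delta/2))\ge(3/5)^m\,\vol(B_{x_i}(\delta/2))$. Iterating $K$ times yields $\vol(B_p(\delta/2))\ge\theta(\delta/2)^m(3/5)^{mK}=:v_1>0$, and one more application of Bishop--Gromov at $p$ gives $\vol(B_p(\rho))\ge v_1(2\rho/\delta)^m=:v(\rho)$ for $0<\rho\le\delta/2$. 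Note that every ball used here has radius strictly less than $\delta$ and centre in $M^\delta_j$, hence lies at positive distance from $\partial M_j$ and is relatively compact in $M_j$; this is what makes the classical Bishop--Gromov relative volume comparison (needing only $\Ricci\ge0$ together with the existence of minimizing geodesics inside the ball) applicable.

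The covering bound then follows by a standard packing argument. Fix $0<\varepsilon<\delta$ (larger $\varepsilon$ are handled by the case $\varepsilon=\delta/2$). If $\{p_1,\dots,p_N\}\subset M^\delta_j$ is $\varepsilon$-separated, the balls $B_{p_a}(\varepsilon/2)$ are pairwise disjoint, are contained in $M_j$, and each has volume at least $v(\varepsilon/2)>0$ by the previous step; therefore $N\,v(\varepsilon/2)\le\vol(M_j)\le V$, so $N\le N(\varepsilon):=V/v(\varepsilon/2)$, a bound independent of $j$. Taking a maximal $\varepsilon$-separated set (which exists and has at most $N(\varepsilon)$ points by the previous sentence), maximality forces $M^\delta_j\subset\bigcup_a B_{p_a}(\varepsilon)$, and hence $\overline{M^\delta_j}\subset\bigcup_a B_{p_a}(2\varepsilon)$. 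Together with the diameter bound $D$, this is exactly the uniform total boundedness needed, and Gromov's precompactness theorem completes the proof.

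I expect the main difficulty to lie in the volume-comparison step, precisely because the boundary carries no regularity and $(M_j,d_{M_j})$ is incomplete. One has to justify carefully that balls of radius $<\delta$ centred in $M^\delta_j$ are relatively compact in $M_j$, that their points are joined to the centre by minimizing geodesics, and that the Ricci comparison is valid along these geodesics up to the (interior) cut locus --- this is exactly the reason for working with $\delta$-inner regions and for keeping all radii strictly below $\delta$. A secondary, more bookkeeping-type issue is the interplay of the two metrics: the chaining must be carried out using $d_{M^\delta_j}$ so that the chain stays inside $M^\delta_j$, while the packing and the Gromov--Hausdorff convergence are phrased in terms of $d_{M_j}$, and one must keep track, as in the preliminary observations, that the $d_{M_j}$-completion of $M^\delta_j$ introduces no points of $\partial M_j$.
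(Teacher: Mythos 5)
Your proof is correct and takes essentially the same approach as the paper's: it reproduces the content of Proposition~\ref{chain-counting}, propagating the noncollapsing hypothesis at $q_j$ along a chain in $(M^\delta_j, d_{M^\delta_j})$ via Bishop--Gromov to obtain a uniform lower volume bound for small balls centred in $M^\delta_j$, and then invokes packing and Gromov's precompactness criterion. The only (favourable) variation is that you chain once at the fixed scale $\delta/3$ and shrink the radius at $p$ afterward, giving a cleaner polynomial bound $v(\rho)\sim c\rho^m$ in place of the paper's $\epsilon$-dependent chain and exponential-in-$1/\epsilon$ bound.
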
  

Example~\ref{many-splines} satisfies the conditions of this theorem,
demonstrating why we can only obtain Gromov-Hausdorff convergence
of the $M_j^\delta$ instead of the $M_j$ themselves.   The $M_j^\delta$
of Example~\ref{gold-foils} do not have Gromov-Hausdorff converging
subsequences (see Remark~\ref{rmrk-gold-foils}) demonstrating the
necessity of the hypothesis requiring an upper volume bound.   
In Theorem~\ref{sc-const}, stated within, we remove the intrinsic
diameter condition, (\ref{diam-here}), and the noncollapsing condition,
(\ref{V-epsilon}), and assume conditions on closed geodesics and
constant sectional curvature instead.

Theorem~\ref{main-thm} and Theorem~\ref{sc-const} are proven in
Section~\ref{sect-main}.  First we review of Gromov-Hausdorff
convergence in Section~\ref{sect-background}.
In Sections~\ref{sect-inner} and~\ref{sect-manifolds} we study the
limits of inner regions in sequences of manifolds that have Gromov-Hausdorff
limits.  See in particular Theorem~\ref{GH-lim-gives-delta-lim}.  
These sections contain many examples.

In Section~\ref{sect-glued-limit-spaces} we define glued limit
spaces for any sequence of open Riemannian manifolds,
$(M_j,g_j)$ assuming that for all $\delta>0$, $(M_j^\delta, d_j)$
converge in the Gromov-Hausdorff sense to a metric space
$(Y^\delta, d_\delta)$.  We build a ``glued limit space", $(Y, d_Y)$, 
from these
$Y^\delta$ in Theorem~\ref{thm-gluing-isoms} and Theorem~\ref{def-glue}.
The metric completion of a glued limit space is called a ``completed glued limit space".

Note that this glued limit space may exist even when $(M_j, d_j)$
has no Gromov-Hausdorff limit as in Examples~\ref{ex-many-splines},
[Remark~\ref{rmrk-many-splines}].   
The glued limit may not be precompact even 
when one has a sequence of flat Riemannian manifolds with boundary
[Examples~\ref{ex-decreasing-splines} and~\ref{ex-many-pages-2}].     

In general the completed glued limit space of a sequence of $M_j$ need
not be unique [Example~\ref{ex-nonunique}].  However, if the
$(M_j, d_{M_j})$ 
have a Gromov-Hausdorff limit, $(X, d_X)$, then the
completed glued limit space is unique and is embedded isometrically into 
$X$ [Theorem~\ref{thm-uniquenessGluedLimit}].   The completed glued limit
space need not be isometric to the Gromov-Hausdorff limit [Example~\ref{ex-one-spline}] even when the $(M_j, g_j)$ are regions in the Euclidean
plane satisfying all the hypothesis of Theorem~\ref{main-thm}
[Remark~\ref{rmrk-one-spline-2}].   
Intuitively, regions which collapse relative to the boundary disappear
while regions which collapse that lie far from the boundary, need not
disappear.

In Section~\ref{sect-curv} we apply Theorems~\ref{sc-const} 
and~\ref{main-thm} to construct glued limit spaces for sequences
of manifolds with curvature bounds [Theorems~\ref{glued-constsec-limits}
and~\ref{glued-Ricci-limits}.   In Section~\ref{sect-prop} we explore
the properties of these glued limit spaces.  First we present an example
where the curvature bounds in the sequence of manifolds is lost in the
Gromov-Hausdorff
limit [Example~\ref{ex-to-negative}].   Then we prove 
Proposition~\ref{sc-prop} concerning glued limits of manifolds with constant sectional
curvature.  We close with Theorem~\ref{thm-ricci-glued-lim-measure}, proving that glued limits constructed under the
conditions of Theorem~\ref{main-thm} have 
Hausdorff dimension $m$, Hausdorff measure $\le V$, and positive
density everywhere.   This final theorem is proven using Theorem~\ref{thm-balls-in-limits-1} which proves certain balls in glued limit spaces are the
Gromov-Hausdorff limits of nice balls in the open manifolds, combined
with the Bishop-Gromov Volume Comparison Theorem \cite{Gromov-metric}
and Colding's Volume Convergence Theorem \cite{Colding-volume}.

Throughout the paper we state open questions: Question~\ref{q-local-geod}, Question~\ref{q-sc-manifold},
Question~\ref{q-sc-unique}, Question~\ref{q-ricci-rectifiable}, 
and Question~\ref{q-ricci-unique}.   The first author is in the 
process of proving Question~\ref{q-ricci-rectifiable} as part of her doctoral
dissertation.  Please contact us if you would like to work on one of the
other open questions or if you are interested in extending
our theorems to the setting where the sequence has a 
negative uniform lower Ricci curvature bound or is allowed to collapse.

We would like to thank Stephanie Alexander (UIUC) for informing us about
the work of Wong and Kodani when we first began to
explore the question.   We'd like to thank 
Frank Morgan (Williams) and David Johnson (Lehigh) for their interest
and encouragement.   We'd like to thank Pedro Sol\'orzano (UC Riverside)
for looking over some of the proofs.    Thanks to Tabitha (IS 25), Penelope (IS 25) 
and Kendall (PS 32) for building the
models of Examples~\ref{gold-foils}-\ref{many-splines} depicted in Figures~\ref{fig-golden-foils}, \ref{fig-many-splines}
and~\ref{fig-many-splines-2} and for computing the
areas of the manifolds in Example~\ref{gold-foils} as part of a K-12 outreach.
Finally we would like to thank
Jorge Basilio (CUNY), Christine Briener (MIT), Maria Hempel (ETH Zurich),
Sajjad Lakzian (CUNY), Christopher Lonke (Carnegie Mellon),
Mike Munn (U Missouri at Columbia), Jacobus Portegies (Courant, NYU), and Timothy Susse (CUNY) for actively
participating with us in the CUNY Metric Geometry Reading Seminar in
the Summer of 2012 and Kenneth Knox (Stony Brook) for joining us in the Spring of 2013.

\section{Background} \label{sect-background}

Here we review Gromov-Hausdorff convergence and Gromov's
Compactness Theorem \cite{Gromov-metric}.  A good resource
for this material is \cite{BBI}.

\subsection{Hausdorff Convergence}

In \cite{Gromov-metric}, Gromov defined the Gromov-Hausdorff
distance between pairs of compact
metric spaces.  We review this definition here.

\begin{defn} [Hausdorff]
The Hausdorff distance between two compact subsets, $A_1, A_2$,
of a metric space, $Z$, with metric, $d_Z$, is defined
\be
d_H^Z(A_1, A_2) = \inf \Big\{ r: \, A_1 \subset T_r(A_2), \, A_2 \subset T_r(A_1)\Big\}
\ee
where the tubular neighborhood, $T_r(A)=\big\{ x\in Z: \, d_Z(x, A)<r\big\}$.
\end{defn}

Observe that if one has a sequence of compact subsets $A_j \subset Z$
such that $d_H(A_j , A_\infty) \to 0$, then for all $a\in A_\infty$
there exists $a_j \in A_j$ such that $\lim_{j\to \infty} a_j =a$.
One also has the following lemma:

\begin{lem} \label{lem-H-balls}
Suppose $A_j \subset Z$ are compact, $d_H^Z(A_j, A_\infty)=h_j \to 0$ and $a_j \in A_j$ such that
$d_Z(a_j, a_\infty)=\delta_j \to 0$.    Then for all $r>0$ there 
exists $r_j =r+\delta_j+h_j \to r$ such that the closed balls converge
\be
d_H^Z\Big(\bar{B}_{a_j}(r_j)\cap A_j, \bar{B}_{a_\infty}(r)\cap A_\infty\Big) \to 0.
\ee
\end{lem}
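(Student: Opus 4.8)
The plan is to show the two required inclusions directly from the definition of the Hausdorff distance, tracking how the error radii $h_j$ and $\delta_j$ propagate. First I would fix $r>0$ and set $r_j = r + \delta_j + h_j$. To prove that $\bar B_{a_\infty}(r)\cap A_\infty$ is contained in a small tubular neighborhood of $\bar B_{a_j}(r_j)\cap A_j$, I would take any $x\in \bar B_{a_\infty}(r)\cap A_\infty$; since $d_H^Z(A_j, A_\infty) = h_j$, for every $\vare>0$ there is $x_j\in A_j$ with $d_Z(x,x_j) < h_j + \vare$, hence $d_Z(a_j, x_j)\le d_Z(a_j,a_\infty)+d_Z(a_\infty,x)+d_Z(x,x_j) < \delta_j + r + h_j + \vare = r_j + \vare$, so $x_j\in \bar B_{a_j}(r_j+\vare)\cap A_j$. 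This is not quite a point of $\bar B_{a_j}(r_j)\cap A_j$, which is the one genuinely delicate point; I address it below. Granting a fix, $x$ lies within $h_j+\vare$ of the desired set, and letting $\vare\to 0$ shows $\bar B_{a_\infty}(r)\cap A_\infty \subset T_{h_j + \eta_j}\big(\bar B_{a_j}(r_j)\cap A_j\big)$ for a suitable $\eta_j\to 0$.

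The reverse inclusion is symmetric with one extra step: given $y_j\in \bar B_{a_j}(r_j)\cap A_j$, there is $y\in A_\infty$ with $d_Z(y_j,y) < h_j+\vare$, and then $d_Z(a_\infty, y)\le d_Z(a_\infty,a_j)+d_Z(a_j,y_j)+d_Z(y_j,y) < \delta_j + r_j + h_j + \vare = r + 2h_j + 2\delta_j + \vare$. So $y$ need not lie in $\bar B_{a_\infty}(r)\cap A_\infty$ either, but it lies in $\bar B_{a_\infty}(r + 2h_j + 2\delta_j + \vare)\cap A_\infty$; I again invoke the fix below to push it back into $\bar B_{a_\infty}(r)\cap A_\infty$ at the cost of an additional error tending to $0$. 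Combining the two inclusions yields $d_H^Z\big(\bar B_{a_j}(r_j)\cap A_j,\ \bar B_{a_\infty}(r)\cap A_\infty\big) \le \eta_j' \to 0$.

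The main obstacle is exactly the issue flagged above: a point $z$ of $Z$ lying within a tiny distance $s$ of the center-moved ball $\bar B_{c}(\rho)$ need not be close to $\bar B_{c}(\rho')$ for $\rho' < \rho$ when the space $Z$ is not geodesic or when the sets $A_j$ are disconnected — there may simply be no points of the smaller ball near $z$. The clean way around this is to observe that the lemma only needs to be applied in the context of this paper, where the ambient sets arise as Gromov-Hausdorff limits and the balls in question can be taken inside the manifolds $M_{j_k}^\delta$ (or their completions), which are length spaces, so that $\bar B_c(\rho)$ is the closure of $\bigcup_{\rho'<\rho}\bar B_c(\rho')$ and every point of $\bar B_c(\rho)$ is a limit of points of $\bar B_c(\rho - \vare)$; alternatively, one restates the conclusion with the closed balls replaced throughout by the appropriate radii so that no shrinking is needed, i.e. compare $\bar B_{a_j}(r+\delta_j+h_j)\cap A_j$ with $\bar B_{a_\infty}(r+\delta_j + h_j)\cap A_\infty$ and absorb $\delta_j+h_j$ into the stated error. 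I would carry out the argument in the length-space setting, since that is where the lemma is used, noting that there $d_Z$ restricted to each $A_j$ is a length metric and the map $\rho\mapsto \bar B_c(\rho)$ is ``left-continuous'' in the Hausdorff sense, which supplies precisely the approximation needed to close both inclusions.
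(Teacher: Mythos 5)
Your diagnosis of the difficulty is correct: a direct triangle-inequality argument for the second inclusion lands $y$ in $\bar{B}_{a_\infty}(r+2h_j+2\delta_j+\vare)\cap A_\infty$ rather than in $\bar{B}_{a_\infty}(r)\cap A_\infty$, and there is no a priori way to shrink the radius. But the fix you propose does not hold up, for two reasons.

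First, the paper's proof explicitly states that it is \emph{not} assuming $A_j$ or $A_\infty$ are length spaces, and it uses the lemma precisely in settings where they are not. The lemma is applied in Theorem~\ref{thm-balls-in-limits-1} to $A_j=\varphi_{j_k}(M^{\delta_{i+1}}_{j_k})$ with the \emph{restricted} metric $d_{M_{j_k}}$, and Section~\ref{sect-inner} gives explicit examples showing $(M^\delta,d_M)$ fails to be a length space. Even if $Z$ were geodesic, a geodesic from $a_\infty$ to $y$ need not stay inside $A_\infty$, so the ``left-continuity of $\rho\mapsto \bar B_c(\rho)\cap A$'' you invoke can fail outright. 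Your fallback of changing the radius on the $A_\infty$ side changes the statement of the lemma, which would break its downstream use.

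Second, and more to the point, the ingredient you are missing is \emph{compactness of $Z$}. The paper's proof of the second inclusion is by contradiction: if the inclusion failed with some fixed $\vare_0>0$, one gets points $x_j\in\bar{B}_{a_j}(r_j)\cap A_j$ staying outside $T_{\vare_0}(\bar{B}_{a_\infty}(r)\cap A_\infty)$; compactness of $Z$ yields a subsequential limit $x_\infty$, which by $d(x_j,a_j)\le r_j\to r$ satisfies $d(x_\infty,a_\infty)\le r$, and by the Hausdorff convergence $A_j\to A_\infty$ (take $y_j\in A_\infty$ with $d(x_j,y_j)<h_j$, so $y_j\to x_\infty$ and $A_\infty$ is closed) satisfies $x_\infty\in A_\infty$. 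Thus $x_\infty\in\bar B_{a_\infty}(r)\cap A_\infty$, contradicting $x_\infty\notin T_{\vare_0}(\bar B_{a_\infty}(r)\cap A_\infty)$. The excess radius vanishes in the limit; no length-space structure is needed. Your first inclusion is essentially the paper's, but you should drop the gratuitous $\vare$: since $A_\infty\subset T_{h_j'}(A_j)$ for any $h_j'>h_j$, the triangle inequality already gives $d(y_j,a_j)< h_j'+r+\delta_j$, and the error is absorbed in $r_j$ up to an arbitrarily small adjustment rather than posing a ``delicate'' obstruction.
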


Here we are not assuming $A_\infty$ or $A_j$ are length spaces.
For completeness of exposition we include the proof of this well known
lemma:

\begin{proof} 
Suppose $x \in \bar{B}_{a_\infty}(r) \cap A_\infty$, then $d_Z(x, a_\infty)\leq r$
and $x\in A_\infty \subset T_{h_j}(A_j)$.  So there exists 
$y_j\in A_j$ such that $d_Z(x,y_j) < h_j$.  By triangle inequality,
\be
d(y_j, a_j) \le d(y_j,x)+d(x, a_\infty) + d(a_\infty, a_j)\le h_j + r + \delta_j =r_j.
\ee
Thus 
\be \label{first-part}
\bar{B}_{a_\infty}(r) \cap A_\infty \subset T_{h_j}(\bar{B}_{a_j}(r_j) \cap A_j).
\ee

Now we need only show there exists $\vare_j \to 0$ such that
\be
\bar{B}_{a_j}(r_j) \cap A_j \subset T_{\vare_j}(\bar{B}_{a_\infty}(r) \cap A_\infty).
\ee
Suppose not.   Then there exists $\vare_0>0$ such that
for all $j$ sufficiently large, there is an 
\be
x_j \in \Big(\bar{B}_{a_j}(r_j) \cap A_j \Big) \setminus T_{\vare_0}\Big(\bar{B}_{a_\infty}(r) \cap A_\infty\Big).
\ee
Since $Z$ is compact
and $T_{\vare_0}(\bar{B}_{a_\infty}(r) \cap A_\infty)$ is open,
 a subsequence of the $x_j$ converge to
some 
\be
x_\infty \notin T_{\vare_0}(\bar{B}_{a_\infty}(r) \cap A_\infty).
\ee
Since $d(x_j, a_j)\le r_j$, we have $d(x_\infty, a_\infty)\le r$.
Since $x_j \in A_j$, there exists $y_j \in A_\infty$ such that
$d(x_j, y_j) < h_j$.   By the triangle inequality
\be
y_j \in B_{a_\infty}(r+h_j) \cap A_\infty.
\ee 
Observe that for our subsequence $y_j \to x_\infty$, thus
\be
x_\infty \in \bar{B}_{a_\infty}(r)\cap A_\infty
\subset T_{\vare_0}(\bar{B}_{a_\infty}(r) \cap A_\infty)
\ee
which is a contradiction.
\end{proof}

\subsection{Gromov-Hausdorff Convergence}

 \begin{defn}
 An isometric embedding, $\varphi: (X, d_X) \to (Z, d_Z)$ between
 metric spaces is a mapping which preserves distances:
 \be
 d_Z( \varphi(x_1), \varphi(x_2)) = d_X(x_1, x_2)
 \ee
 \end{defn}

\begin{defn} [Gromov]
The Gromov-Hausdorff distance between a pair of compact metric
spaces, $(X_1, d_{X_1})$ and $(X_2, d_{X_2})$ is defined
\be
d_{GH}\Big(\big(X_1, d_{X_1}\big),\big(X_2, d_{X_2}\big)\Big)= 
\inf\Big\{ d_Z\big(\varphi_1(X_1), \varphi_2(X_2)\big): \,\, \varphi_i: X_i\to Z \,\Big\}
\ee
where the infimum is taken over all isometric embeddings
$\varphi_i: X_i \to Z$ and all metric spaces, $Z$.
\end{defn}

Gromov proved that the Gromov-Hausdorff distance is a distance
on the space of compact metric spaces.  When studying metric
spaces, $X_i$, which are only precompact, one takes the metric 
completions, $\bar{X}_i$,
before comparing such spaces using the Gromov-Hausdorff distance:

 \begin{defn}
 Given a precompact metric space space, $(X, d_X)$, the 
 metric completion, $(\bar{X}, d_X)$, consists of equivalence classes
 of Cauchy sequences, $\{x_1, x_2, x_3,...\}$, in $X$, where 
 \be
 d_X(\{x_j\}, \{y_j\}) = \lim_{j\to \infty} d_X(x_j, y_j)
 \ee
 and two Cauchy sequences are equivalent if the distance between them
 is $0$.   There is an isometric embedding
 \be
 \varphi: X \to \bar{X} \textrm{ such that } \varphi(x)=\{x,x,x,...\}.
 \ee
 \end{defn}
 
 In this paper we define the boundary of an open metric space 
 \be
 \partial X = \bar{X} \setminus X.
 \ee
 When $M$ is a smooth Riemannian manifold with boundary,
 then this notion of boundary agrees with the standard notion
 of boundary.   However, if $M$ is a smooth Riemannian manifold
 with a singular point removed, then the boundary in our setting
 is just the missing singular point.

\subsection{Lattices and Gromov-Hausdorff Convergence}

One technique that can be applied to produce amazingly
complicated Gromov-Hausdorff limits from surfaces, is to construct
lattices.   The basic well known lemma is as follows:

\begin{lem} \label{lem-lattice-1}
Let $X=[a_1,b_1]\times \cdots \times [a_k, b_k]$ with the taxi product
metric
\be
d_X\big( (x_1,...x_k), (y_1,...y_k) \big) = \sum_{i=1}^k |x_i-y_i|.
\ee
Then for any $\vare>0$ there exists a 2 dimensional manifold
$M_\vare$ such that
\be
d_{GH}(M_\vare, X) < \vare.
\ee
\end{lem}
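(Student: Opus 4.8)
The plan is to realize the box $X=[a_1,b_1]\times\cdots\times[a_k,b_k]$ with the taxi metric as a Gromov--Hausdorff limit by building a ``grid'' surface: a two-dimensional manifold consisting of long thin tubes laid out along the edges of a fine rectangular lattice inside $X$, joined at the lattice vertices. The key observation is that when one thickens a one-dimensional graph into a two-dimensional surface by replacing edges with thin flat strips (tubes) of width $w$ and smoothing the joints at vertices, the intrinsic length metric on the surface is uniformly close (within a constant times $w$, plus the joint corrections) to the combinatorial metric on the graph where each edge has its Euclidean length. So the problem reduces to two steps: (i) a graph built from a sufficiently fine lattice in $X$, with the $\ell^1$-edge-lengths, is GH-close to $(X,d_X)$; and (ii) the thin-tube surface around that graph is GH-close to the graph.

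For step (i), fix a large integer $N$ and let $G_N$ be the graph whose vertices are the points of $X$ all of whose coordinates lie in the appropriate $\tfrac1N$-spaced grid, with an edge between two vertices differing in exactly one coordinate by one grid step, the edge carrying its Euclidean (= that coordinate's) length. The graph distance in $G_N$ between two vertices is exactly the sum of coordinate differences, i.e.\ $d_X$ restricted to the grid, since monotone coordinate-by-coordinate paths are available and are shortest. Every point of $X$ is within $\tfrac{k}{2N}$ (in $d_X$) of a grid vertex, so the inclusion of vertices gives a $\tfrac{k}{2N}$-dense isometry onto its image, hence $d_{GH}(G_N,X)\le \tfrac{k}{N}$, say. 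Choose $N$ with $\tfrac{k}{N}<\vare/2$.

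For step (ii), replace each edge of $G_N$ by a flat rectangular strip of width $w$ and length equal to the edge length (shortened slightly near the endpoints), and at each vertex insert a small smooth ``hub'' — e.g.\ a piece of a sphere or a smoothly rounded polygonal cap — of diameter $O(w)$ that connects the incident strips smoothly, producing a genuine smooth $2$-manifold $M_\vare$ (with boundary, the two ends of each terminal strip, or one may cap those off too; the lemma only asks for a surface GH-close to $X$, so either is fine). Put the induced length metric on $M_\vare$. A path in $M_\vare$ projects to a path in the graph whose length it approximates up to the accumulated hub corrections, and conversely graph paths lift; one checks that for each pair of points the intrinsic distance in $M_\vare$ agrees with the $G_N$-distance of the nearest vertices up to an error bounded by $C(k)\,w\cdot(\text{number of vertices along a near-geodesic})$, which is itself $O(w/N)$ times a constant depending only on $\diam(X)$ and $k$. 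Also every point of $M_\vec$ lies within $O(w)$ of a vertex. Hence $d_{GH}(M_\vare,G_N)\to 0$ as $w\to 0$; pick $w$ small enough that this is $<\vare/2$, and the triangle inequality for $d_{GH}$ finishes the proof.

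The main obstacle is the metric control in step (ii): one must verify that gluing in the hubs does not distort lengths in an uncontrolled way, i.e.\ that a shortest path in the surface cannot gain by cutting corners through a hub more than $O(w)$ per hub, and that it cannot be forced to be much longer than the corresponding graph path. This is where the estimate ``error $\le C(k,\diam X)\,w$'' must be made honest; once the hubs have diameter $O(w)$ and bounded geometry uniformly in $N$, the number of hubs a near-geodesic meets is $O(1/w)\cdot O(\diam X)$ only in the worst case but the per-hub error is $O(w^2)$ after smoothing, so the total stays $O(w)\to 0$. I would carry out the hub construction explicitly enough to pin down these constants, but would not belabor the elementary comparison estimates.
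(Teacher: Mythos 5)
Your proposal follows essentially the same strategy the paper uses: approximate $X$ by a fine rectangular lattice graph with $\ell^1$-edge-lengths, observe that the graph metric agrees with the restricted taxi metric on vertices, then thicken the graph into a surface by replacing edges with thin tubes and vertices with small hubs (the paper uses thin cylinders glued to small spheres). The paper states Lemma~\ref{lem-lattice-1} without proof as ``well known'' and only proves the more elaborate Proposition~\ref{prop-lattice}, but your two-step decomposition and the tube/hub construction are exactly the paper's technique. One minor slip: after you fix $N$, the number of hubs a near-geodesic meets is governed by $N$ and $\diam X$, not by $O(1/w)$ as you write; this does not affect the conclusion, since once $N$ is fixed the per-hub error vanishes as $w\to 0$.
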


The classic application of this lemma is to construct a Gromov-Hausdorff
limit of Riemannian surfaces which is infinite dimensional:

\begin{ex}
Let $X_j=[0,1]\times[0,1/2]\times \cdots \times [0, 1/2^j]$ with the
taxi metric and let 
\be
X=[0,1]\times[0,1/2]\times \cdots \times [0, 1/2^j]\times \cdots
\ee
be the infinite dimensional space also with the taxi metric:
\be
d_X\big( (x_1,x_2,...), (y_1,y_2,...) \big) = \sum_{i=1}^\infty |x_i-y_i|.
\ee
Then 
\be
d_{GH}(X_k, X) \le \sum_{j=k+1}^\infty 1/2^j=1/2^k \to 0.
\ee
Thus by Lemma~\ref{lem-lattice-1} we have a sequence of
surfaces $M_k$ converging to $X$ as well.
\end{ex}

Since we are interested in manifolds with boundary, we will
prove a stronger version of Lemma~\ref{lem-lattice-1} that can
be applied to produce examples later in the paper.

\begin{prop} \label{prop-lattice}
Suppose $X=[a_1,b_1]\times \cdots \times [a_k, b_k]$
with the taxi product metric and $A \subset \partial X$ (possibly empty), 
then for any $\vare>0$ there exists an open Riemannian surface, $M$, 
with boundary, $\partial M$ (possible empty) such that
\be
d_{GH}(M, X) < \vare \textrm{ and } d_{GH}(\partial M, A) < \vare.
\ee 
We can also prove that if we have a collection of $X_k$ and
$A_k \subset \partial X_k$ as above with subsets $B_k \subset X_k$ and isometric
embeddings $\psi_k: B_{k+1}\to B_{k}$, and we glue together
$X=X_1 \,\disjointunion \, X_2 \,\disjointunion \, \cdots \,\disjointunion \, X_k$ along these isometric embeddings,
and set $A=\union A_k \subset X$, then for any $\vare>0$
we have an open Riemannian surface, $M$, 
with boundary, $\partial M$ (possible empty) such that
\be
d_{GH}(M, X) < \vare \textrm{ and } d_{GH}(\partial M, A) < \vare.
\ee 
In fact, for any $\delta>0$, using the restricted distances, we have
\be
d_{GH}\big( (M\setminus T_\delta(\partial M), d_M),
(X\setminus T_\delta(A), d_X) \big) < \vare.
\ee
\end{prop}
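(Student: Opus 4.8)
The plan is to build the surface $M$ explicitly as a thin neighborhood of a graph, refining the classical construction behind Lemma~\ref{lem-lattice-1}. First I would handle the single-box case. Fix a small $\eta>0$ (to be chosen much smaller than $\vare$) and take an $\eta$-net $N$ in $X=[a_1,b_1]\times\cdots\times[a_k,b_k]$ with the taxi metric, chosen so that the portion $N_A$ of the net lying within $\eta$ of $A$ is itself an $\eta$-net for $A$. Form a graph $G$ on vertex set $N$ by joining two net points whenever their taxi distance is at most, say, $2\eta$; assign each edge the length equal to that taxi distance. Standard arguments show $(G,d_G)$ is within $C\eta$ of $X$ in the Gromov-Hausdorff sense, and the subgraph $G_A$ spanned by $N_A$ is within $C\eta$ of $A$. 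Then I would thicken $G$ to a smooth surface: replace each edge by a long thin flat strip (a ``tube'') of width $w\ll \eta$ and each vertex by a small smooth ``plumbing'' region where the incident strips meet, tapering the strips so that the induced length metric on the core curves equals $d_G$ up to error $o(\eta)$. The key metric estimate is that the length metric $d_M$ on this surface differs from $d_G$ (hence from $d_X$) by at most $C(\eta+w)$: any path in $M$ can be pushed to the core graph at bounded cost, and any path in $G$ is realized in $M$.

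For the boundary control I would arrange the thickening so that $\partial M$ is exactly the thickened version of $G_A$: along the strips and plumbing regions corresponding to edges and vertices of $G_A$ I make the tube a ``half-open'' strip (one side of the strip is an actual boundary edge $\bar M\setminus M$ rather than a smoothly capped end), while everywhere else the strips are capped off smoothly so they contribute no boundary. With this choice $\partial M$ is within $C(\eta+w)$ of $G_A$, hence within $\vare$ of $A$ once $\eta,w$ are chosen small enough, giving both $d_{GH}(M,X)<\vare$ and $d_{GH}(\partial M,A)<\vare$.

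For the glued statement, the $X_k$ are attached along isometric embeddings $\psi_k\colon B_{k+1}\to B_k$; I would choose the $\eta$-nets compatibly, i.e.\ so that $\psi_k$ maps the net in $B_{k+1}$ onto (a subset of) the net in $B_k$, and likewise for the $A_k$-nets. Then the graphs $G(X_k)$ glue along a common subgraph, producing a single connected graph $G$ that is $C\eta$-close to the glued space $X=X_1\sqcup\cdots\sqcup X_k$ (finite disjoint-then-glued union, so compact), and the same thickening procedure yields $M$ with $d_{GH}(M,X)<\vare$, $d_{GH}(\partial M,A)<\vare$. The last assertion about restricted distances follows because removing $T_\delta(\partial M)$ from $M$ deletes precisely the tubes/plumbings near the thickened $G_A$; on the complement the restricted metric $d_M$ still moves along the remaining core graph, which $C\eta$-approximates $(X\setminus T_\delta(A),d_X)$ — here one must check that the detour distance in $X\setminus T_\delta(A)$ is itself well approximated by paths in the truncated graph, which holds since the net was taken fine relative to $\delta$ as well (so choose $\eta\ll\delta$).

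The main obstacle is the metric bookkeeping at the plumbing regions and, more subtly, making the three approximations simultaneous: the same net must be fine enough to approximate $X$, its restriction must approximate $A$, and after deleting a $\delta$-neighborhood of the (thickened) $A$-part the truncated graph must still approximate $(X\setminus T_\delta(A),d_X)$ in the \emph{restricted} metric. Getting a uniform constant $C$ in ``$d_M$ within $C\eta$ of $d_X$'' that does not blow up as the net is refined — because refining the net increases the number of plumbing regions a geodesic may traverse — is the delicate point; this is handled by scaling the strip width $w$ down fast enough relative to $\eta$ and by noting that a near-geodesic in $G$ of length $\le \diam(X)$ passes through at most $O(\diam(X)/\eta)$ vertices, each contributing error $O(w)$, so the total plumbing error is $O(w\,\diam(X)/\eta)$, which is made $<\vare$ by taking $w$ small. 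Everything else is routine smoothing and triangle-inequality estimation.
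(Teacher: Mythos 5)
Your construction is correct in outline and follows the same basic strategy as the paper---build a graph that Gromov--Hausdorff approximates $(X,d_X)$ and then thicken it to a surface---but the implementation differs in two notable ways. The paper takes the graph to be a regular rectangular lattice $Y_\vare\subset X$ (vertices plus axis-parallel edges), which is an \emph{isometric} subspace of the taxi product, so there is no graph-versus-$X$ approximation error to estimate; it then replaces vertices by tiny spheres, vertices near $A$ by tiny hemispheres, and edges by thin cylinders, so $\partial M$ is a union of circles coming only from the hemispheres. You instead use a generic $\eta$-net graph (which is only approximately isometric to $X$, so you must invoke the ``standard arguments'' estimate) and create boundary by leaving one edge of the thickening strip open along $G_A$, so $\partial M$ is a union of arcs along edges rather than circles at vertices. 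Both versions are valid; the lattice choice is slightly slicker because $d_{Y_\vare}=d_X$ exactly on the grid. On the other hand, you address two points the paper glosses over: you explicitly control the accumulated plumbing error (proportional to the number of vertices a near-geodesic visits) by scaling the tube width $w$ down relative to $\eta$, and you actually argue the final claim about the restricted-metric approximation of $M\setminus T_\delta(\partial M)$ by $X\setminus T_\delta(A)$ after choosing $\eta\ll\delta$, whereas the paper asserts that claim without proof. Your remark that the restricted metric $d_M$ (not the length metric of the truncated surface) is what makes the last display work is exactly the right observation.
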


\begin{proof}
For the first part, we take a lattice $Y'_\vare\subset Y_\vare \subset X$
such that $X \subset T_{\vare/2}(Y_\vare)$.   Here we use $Y'_\vare$
to denote the points and $Y_\vare$ to include $1$ dimensional edges
between the points in the lattice.   Observe
that $d_{Y_\vare}(y_1, y_2)=d_X(y_1, y_2)$ because we
are using the taxi norm.   Let $A_\vare \subset Y'_\vare$ be chosen
such that $A_\vare \subset T_{\vare/2}(A)$.   So
\be
d_{GH}(Y_\vare, X) < \vare/2 \textrm{ and } d_{GH}(A_\vare, A) < \vare/2.
\ee 
Note that we may now view $Y_\vare$ as a graph.   For example,
if $X=[0,5]\times[0,6]$ and $A=[0,5]\times \{6\}$ and $\epsilon=1$,
then the left side of
Figure~\ref{fig-taxi-lattice} is the graph $Y_\vare$ with 
$A_\vare$ is depicted in red.

\begin{figure}[htbp] 
   \centering
   \includegraphics[width=1.5in]{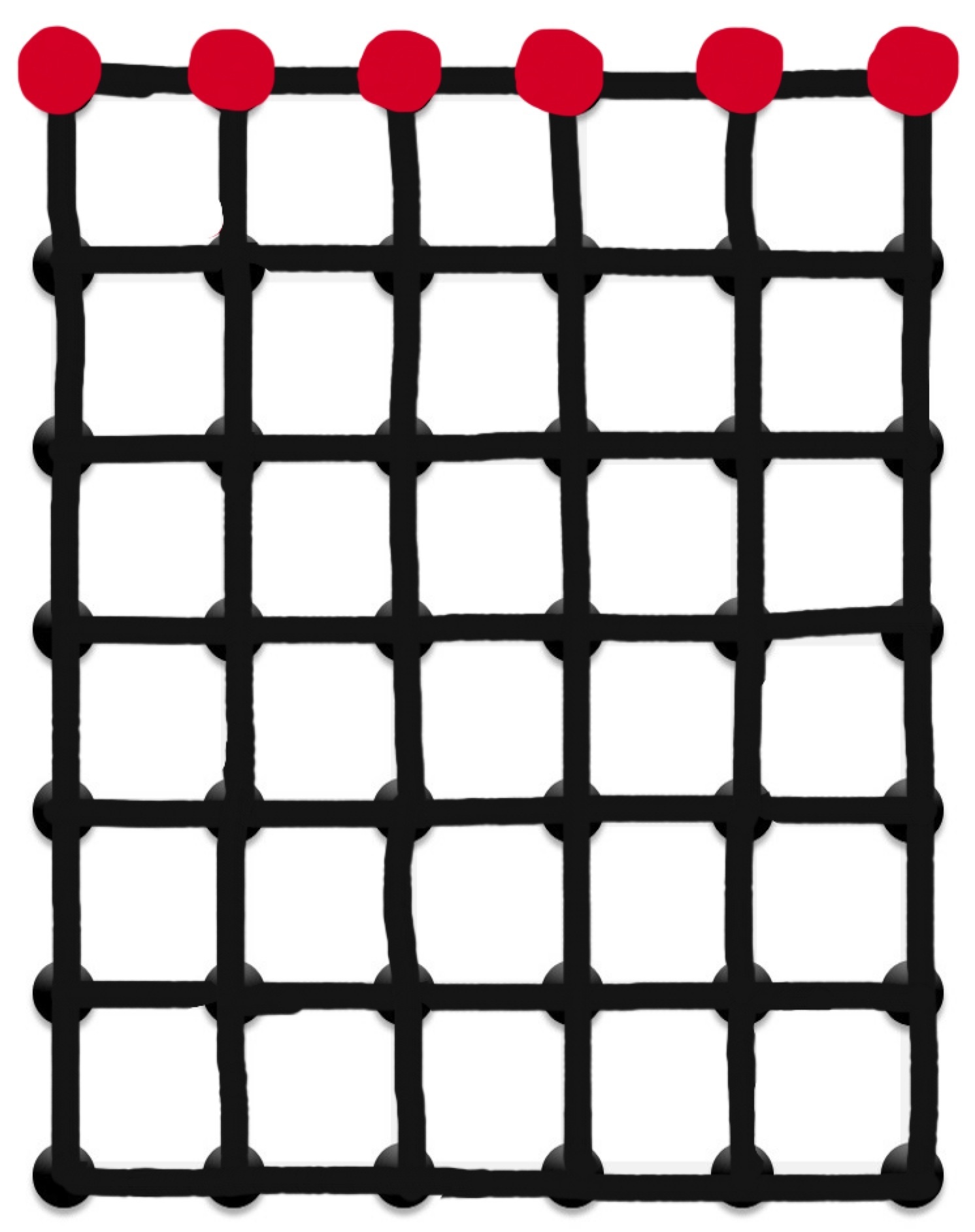}
   \hspace{1in} 
      \includegraphics[width=1.5in]{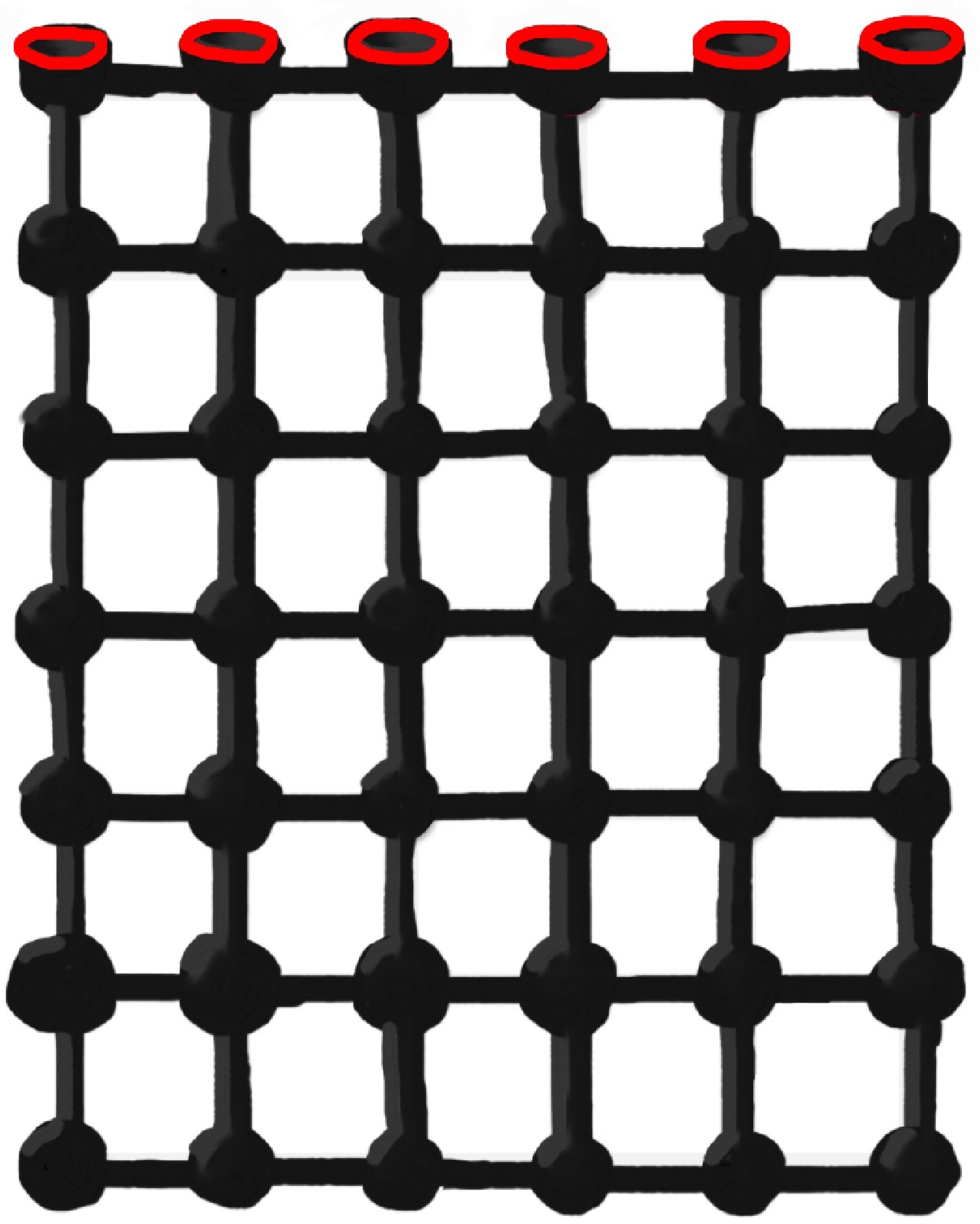} 
 \caption{\textcolor{red}{$A_\vare$}$\,\subset Y_\vare$
 and \textcolor{red}{$\partial M$} $\subset M$ as in the proof
 of Proposition~\ref{prop-lattice}. \newline
 {\em Note that these graphics will be improved before publication.}}  
   \label{fig-taxi-lattice}
\end{figure}

Next we construct a smooth
surface $M$ by replacing the lattice points in $A_\vare \subset Y'_\vare$
by small hemispheres of diameter $<<\vare$ and
lattice points in $Y'_\vare \setminus A_\vare$ by small spheres
of diameter $<<\vare$.   We replace the line segments in $Y_\vare$
by arbitrarily thin cylinders of the same length, small enough that we can 
glue them to their corresponding spheres smoothly replacing disjoint
balls in those spheres or hemispheres.
This creates a smooth manifold, $M$, such that $\partial M$ is
a union of the boundaries of the hemispheres such that
\be
d_{GH}\left(Y_\vare, M\right) < \vare/2 \textrm{ and } 
d_{GH}\left(A_\vare, \partial M\right) < \vare/2.
\ee 
See the right side of Figure~\ref{fig-taxi-lattice}, where $M^2$ is depicted
in black and $\partial M^2$ is in red.
This completes the first claim in the proposition.

To complete the rest, we take $M_k$ consisting of tubes joined at
spheres and hemispheres close to $X_k$ as above such that
\be
d_{GH}\left(X_k, M_k\right) < \vare/k 
\textrm{ and } d_{GH}\left(A_k, \partial M_k\right) < \vare/k.
\ee 
Note that in the construction above we could have created $B'_k\subset Y'_k$
corresponding to $B_k$.  We have $\vare/(2k)$ almost distance preserving
maps $\psi'_k: B'_{k+1}\to B'_{k}$.   So now we glue together the $M_k$
to form $M$ as follows.   If $b\in B'_k$ maps to $\psi'_k(b)\in B'_k$ we
connect the sphere or hemisphere corresponding to $b$ in $M_k$
to a sphere or hemisphere corresponding to $\psi_k(b)$ in $M_{k+1}$
by a very short, very thin tube.
\end{proof}

\subsection{Review of Gromov's Compactness Theorem}

In \cite{Gromov-metric}, Gromov proved a compactness theorem
for sequences of compact metric spaces.  We review this theorems and
related propositions here.

\begin{thm}\label{GH-compactness-1}[Gromov]
Given a $D>0$ and a function $N: (0,D] \to \N$, we define the
collection, $\mathcal{M}^{D,N}$ of compact metric spaces, $(X, d_X)$ with
diameter $\le D$ that can be covered by $N(\epsilon)$ balls of
radius $\epsilon>0$:
\be
X \subset \bigcup_{i=1}^{N(\epsilon)} B_{x_i}(\epsilon).
\ee
This collection $\mathcal{M}^{D,N}$ is compact with respect to the
Gromov-Hausdorff distance.
\end{thm}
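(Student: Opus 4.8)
The plan is to establish the two facts that together yield compactness of a subcollection of the metric space of isometry classes of compact metric spaces: that $\mathcal{M}^{D,N}$ is precompact (every sequence has a Gromov--Hausdorff convergent subsequence) and that it is closed (the limit again lies in $\mathcal{M}^{D,N}$). I would prove precompactness directly, since this also produces the limit space concretely and avoids having to separately invoke completeness of the ambient space.

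\emph{Step 1 (a convergent subsequence).} Given a sequence $(X_j)$ in $\mathcal{M}^{D,N}$, for each $j$ and each integer $k$ with $1/k\le D$ fix a $(1/k)$-net in $X_j$ of cardinality at most $N(1/k)$. Listing these nets in order of increasing $k$ produces a countable dense set $S_j=\{p_j^1,p_j^2,\dots\}\subset X_j$, arranged so that the number $m_k$ of initial points needed to include the $k$-th net depends only on $k$, not on $j$; this uniformity in $j$ is the crucial point. All the distances $d_{X_j}(p_j^\alpha,p_j^\beta)$ lie in $[0,D]$, so by a Cantor diagonal argument over the countably many pairs $(\alpha,\beta)$ I pass to a subsequence along which $d_{X_j}(p_j^\alpha,p_j^\beta)\to d_\infty(\alpha,\beta)$ for all $\alpha,\beta$. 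The array $d_\infty$ is a pseudometric on $\N$; quotienting by zero distances and taking the metric completion gives a space $X_\infty$. Since the images of the first $m_k$ points form a $(1/k)$-net in $X_\infty$ (by a pigeonhole argument on which net point is used, followed by passing to the limit), $X_\infty$ is totally bounded and complete, hence compact; and a standard construction of $\vare$-isometries out of the nets $\{p_j^1,\dots,p_j^{m_k}\}$ shows $X_j\GHto X_\infty$ along the subsequence. (Alternatively, one can show $\mathcal{M}^{D,N}$ is totally bounded outright: an $\vare/3$-net of size $N(\vare/3)$ in $X$, with its mutual distances rounded to multiples of $\vare/3$ and a harmless adjustment to restore the triangle inequality, represents $X$ to within $\vare$ in $d_{GH}$ by one of only finitely many finite metric spaces; then invoke completeness of the space of compact metric spaces.)

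\emph{Step 2 (the limit stays in the class).} Since $|\diam X-\diam Y|\le 2\,d_{GH}(X,Y)$, we get $\diam X_\infty=\lim_j\diam X_j\le D$. For the covering bound, fix $\vare\in(0,D]$ and $\eta>0$; for $j$ large there is a metric space $Z$ containing isometric copies of $X_j$ and $X_\infty$ with Hausdorff distance less than $\eta$. Transporting a cover of $X_j$ by $N(\vare)$ balls of radius $\vare$ over to $X_\infty$ (each center is within $\eta$ of a point of $X_\infty$, and every point of $X_\infty$ is within $\eta$ of $X_j$) shows $X_\infty$ is covered by $N(\vare)$ balls of radius $\vare+2\eta$ centered in $X_\infty$. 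Taking $\eta=1/k\to 0$ and using compactness of $X_\infty$ to pass to limits of the finitely many centers, $X_\infty$ is covered by $N(\vare)$ closed balls of radius $\vare$; enlarging the radius by an arbitrarily small amount if open balls are wanted, $X_\infty\in\mathcal{M}^{D,N}$.

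\emph{Where the difficulty lies.} The real content is Step 1, and within it the decisive ingredient is that the net sizes $m_k$ can be chosen uniformly in $j$ --- this is precisely what the hypothesis on $N$ buys, and what makes a single diagonal limit possible --- together with the routine but slightly fiddly verification that the pointwise distance limits genuinely assemble into a compact metric space to which the $X_j$ converge. Step 2 is easy apart from the bookkeeping needed to retain the exact count $N(\vare)$ in the limit, where the open-versus-closed-ball distinction must be handled (the statement is cleanest read with closed balls). In the total-boundedness alternative, the analogous nuisance is ensuring the rounded model spaces are still honest metric spaces.
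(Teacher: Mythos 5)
The paper does not prove this theorem itself; it appears in the background section titled ``Review of Gromov's Compactness Theorem'' and is cited to Gromov \cite{Gromov-metric}, with proofs supplied only for the auxiliary Propositions~\ref{disjoint-to-cover} and~\ref{unif-lower} that the authors later use to verify membership in $\mathcal{M}^{D,N}$. Your proof is correct and is essentially Gromov's own argument: uniform control by $N$ on the net sizes $m_k$, a Cantor diagonal passage to pointwise limits of distances on a countable index set, assembly of the limit $X_\infty$ as the completion of the quotiented pseudometric space, and verification that $X_\infty$ remains in $\mathcal{M}^{D,N}$. The parenthetical alternative (total boundedness of $\mathcal{M}^{D,N}$ via finitely many $\vare$-approximating finite metric spaces, plus completeness of the ambient Gromov--Hausdorff space) is also correct and standard. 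The one soft spot is the open-versus-closed ball bookkeeping at the end of Step~2: the $\eta\to 0$ argument genuinely yields only a cover of $X_\infty$ by $N(\vare)$ \emph{closed} $\vare$-balls, and ``enlarging the radius slightly'' trades radius for count, so you cannot quite recover $N(\vare)$ \emph{open} balls of radius exactly $\vare$. You flag this yourself, and it is the usual convention nuisance with this theorem rather than a flaw in the idea; the class is cleanest read with closed balls, as you note.
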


It is standard to determine whether a metric space lies in such
a compact collection by examining maximal collections of
disjoint balls:

\begin{prop}\label{disjoint-to-cover}
Given a metric space $(X, d_X)$.  Let $N$ be the
maximum number of pairwise disjoint balls of radius $\epsilon/2$ that
can lie in $X$.  Then the minimum number of balls of radius
$\epsilon$ required to cover $X$ is $\le N$.
\end{prop}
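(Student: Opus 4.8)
The plan is to use the standard packing--covering duality. First I would dispose of the trivial case: if $N=\infty$, there is nothing to prove, so assume $N$ is finite. Then by the definition of $N$ there exists a collection of pairwise disjoint balls $B_{x_1}(\epsilon/2),\dots,B_{x_N}(\epsilon/2)$ in $X$, and this collection is maximal in the sense that no ball $B_x(\epsilon/2)$ can be adjoined to it while keeping all the balls pairwise disjoint (otherwise we would exhibit $N+1$ pairwise disjoint balls of radius $\epsilon/2$, contradicting the maximality of $N$).

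The key step is to check that the enlarged balls $B_{x_1}(\epsilon),\dots,B_{x_N}(\epsilon)$ cover $X$. Given any $x\in X$, consider the ball $B_x(\epsilon/2)$. By the maximality observed above, $B_x(\epsilon/2)$ cannot be disjoint from every $B_{x_i}(\epsilon/2)$, so there is an index $i$ and a point $z\in B_x(\epsilon/2)\cap B_{x_i}(\epsilon/2)$. By the triangle inequality, $d_X(x,x_i)\le d_X(x,z)+d_X(z,x_i)<\epsilon/2+\epsilon/2=\epsilon$, hence $x\in B_{x_i}(\epsilon)$. Since $x\in X$ was arbitrary, $X\subset\bigcup_{i=1}^N B_{x_i}(\epsilon)$.

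This exhibits a cover of $X$ by $N$ balls of radius $\epsilon$, so the minimum number of radius-$\epsilon$ balls needed to cover $X$ is $\le N$, as claimed.

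I do not anticipate any real obstacle here; the argument is elementary. The only points deserving a little care are the reduction to the finite case and the fact that any collection realizing the maximum value $N$ is automatically maximal with respect to inclusion, which is exactly what drives the covering step.
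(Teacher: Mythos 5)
Your proof is correct and follows essentially the same packing--covering argument as the paper: take a maximal disjoint family of $\epsilon/2$-balls, observe any point's $\epsilon/2$-ball must meet one of them, and apply the triangle inequality. The only difference is that you spell out the $N=\infty$ case and the intermediate point $z$ explicitly, which the paper leaves implicit.
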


\begin{proof}
Let $\{B_{x_i}(\epsilon/2): \, i=1,...,N\}$ be a maximal collection of
pairwise disjoint balls of radius $\epsilon/2$.  Let $x\in X$.
Then $\exists i \in \{1,...,N\}$ such that $B_{x_i}(\epsilon/2)\cap B_x(\epsilon/2)\neq \emptyset$.  Thus $d_X(x, x_i)< \epsilon$ and
\be
X \subset \bigcup_{i=1}^N B_{x_i}(\epsilon).
\ee
\end{proof}

In a Riemannian manifold or metric measure space, the
volumes of balls may thus be applied to determine the function, $N$.

\begin{prop}\label{unif-lower}
If there exists $\Theta>0$ such that 
\be
\vol(B_p(\epsilon))/\vol(M) \ge \Theta
\ee
then the maximum number of disjoint balls of
radius $\epsilon$ is $\le 1/\Theta$.
\end{prop}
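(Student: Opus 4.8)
The plan is a direct volume--packing argument, exactly the kind used to prove the Bishop--Gromov covering estimate. Let $B_{p_1}(\epsilon),\dots,B_{p_N}(\epsilon)$ be an arbitrary finite collection of pairwise disjoint balls of radius $\epsilon$ in $M$. First I would observe that, being pairwise disjoint measurable subsets of $M$,
\be
\vol(M) \ \ge\ \sum_{i=1}^N \vol(B_{p_i}(\epsilon)).
\ee

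Next I would apply the hypothesis at each center $p_i$ (reading the stated inequality as holding for every admissible center), namely $\vol(B_{p_i}(\epsilon)) \ge \Theta\,\vol(M)$, to obtain
\be
\vol(M) \ \ge\ \sum_{i=1}^N \Theta\,\vol(M) \ =\ N\,\Theta\,\vol(M).
\ee
Since $M$ is a nonempty open Riemannian manifold, $0<\vol(M)$, and the hypothesis is nonvacuous only when $\vol(M)<\infty$, so dividing through by $\vol(M)$ yields $N\Theta \le 1$, that is, $N \le 1/\Theta$. As the collection of disjoint balls was arbitrary, the maximum number of pairwise disjoint balls of radius $\epsilon$ is $\le 1/\Theta$.

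There is essentially no obstacle here; the argument is a one--line volume count. The only points deserving a word of care are that the inequality in the hypothesis must be interpreted as valid at each center of a ball in the packing, and that $\vol(M)$ is finite and positive so that the final division is legitimate (if $\vol(M)=\infty$ the statement is vacuous, since no $\Theta>0$ can satisfy the assumption). If desired, one could then combine this with Proposition~\ref{disjoint-to-cover} to pass from a disjoint--ball bound to a covering bound, but for the proposition as stated the volume estimate above is all that is needed.
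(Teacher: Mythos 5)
Your proof is correct and is essentially identical to the paper's own one-line volume count: disjointness gives $\vol(M)\ge\sum_i\vol(B_{p_i}(\epsilon))\ge N\Theta\vol(M)$, and dividing by $\vol(M)$ yields $N\le 1/\Theta$. The remarks you add about interpreting the hypothesis at every center and about finiteness of $\vol(M)$ are reasonable caveats but not part of the paper's argument.
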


\begin{proof}
\be
\vol(M) \ge \sum_{i=1}^N \vol(B_{x_i}(\epsilon)) \ge \sum_{i=1}^N \Theta \vol(M)
=N \Theta \vol(M).
\ee
\end{proof}

Gromov applies his compactness theorem in conjunction with these
propositions to study the compactness of sequences of compact Riemannian
manifolds for which one is able to control the volumes of balls.   We will
apply the same idea to study sequences of metric completions of open
manifolds.

One of the beauties of Gromov's Compactness Theorem, is that he
has proven the converse as well:

\begin{thm}\label{GH-converse} [Gromov]
Suppose $(X_j, d_j)$ are compact metric spaces.  Suppose that
there exists $\epsilon_0>0$ such that $X_j$ contains at least $j$ disjoint
balls of radius $\epsilon_0$. Then no subsequence of the $X_j$
has a Gromov-Hausdorff limit.   
\end{thm}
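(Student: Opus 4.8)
The plan is to argue by contradiction using the quantitative obstruction provided by the hypothesis. Suppose a subsequence $(X_{j_k}, d_{j_k})$ converges in the Gromov-Hausdorff sense to a compact metric space $(X_\infty, d_\infty)$. Since $X_\infty$ is compact, it is totally bounded, so there exists a finite number $N = N(\epsilon_0/4)$ such that $X_\infty$ can be covered by $N$ balls of radius $\epsilon_0/4$. The first step is to transport this covering property to the spaces $X_{j_k}$ for $k$ large: using that $d_{GH}(X_{j_k}, X_\infty) < \epsilon_0/8$ for $k$ large, one produces an isometric embedding of both into a common $Z$ with the images within Hausdorff distance $\epsilon_0/8$, and then enlarges the $N$ balls covering $X_\infty$ to balls of radius $\epsilon_0/4 + \epsilon_0/8 < \epsilon_0/2$ whose (appropriately recentered) analogues cover $X_{j_k}$. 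Hence for all large $k$, the space $X_{j_k}$ is covered by at most $N$ balls of radius $\epsilon_0/2$.

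The second step invokes Proposition~\ref{disjoint-to-cover} in its contrapositive form: if $X_{j_k}$ were covered by $\le N$ balls of radius $\epsilon_0/2$, then it cannot contain more than $N$ pairwise disjoint balls of radius $\epsilon_0/2$. Indeed, reading Proposition~\ref{disjoint-to-cover} with $\epsilon = \epsilon_0$: the maximum number $N'$ of pairwise disjoint balls of radius $\epsilon_0/2$ satisfies (covering number with radius $\epsilon_0$) $\le N'$; but a covering by balls of radius $\epsilon_0/2$ is a fortiori a covering by balls of radius $\epsilon_0$, so the covering number with radius $\epsilon_0$ is $\le N$, which does not immediately bound $N'$. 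The cleaner route is a direct packing argument: if $\{B_{y_i}(\epsilon_0/2)\}_{i=1}^M$ are pairwise disjoint in $X_{j_k}$ and $\{B_{x_l}(\epsilon_0/2)\}_{l=1}^N$ covers $X_{j_k}$, then each center $y_i$ lies in some $B_{x_l}(\epsilon_0/2)$; if two centers $y_i, y_{i'}$ lay in the same $B_{x_l}(\epsilon_0/2)$ then $d_{j_k}(y_i, y_{i'}) < \epsilon_0$, so the balls $B_{y_i}(\epsilon_0/2)$ and $B_{y_{i'}}(\epsilon_0/2)$ would share the midpoint region — more precisely, this does not quite force intersection either, so instead one uses radius $\epsilon_0/4$ for the covering transported from $X_\infty$: then two disjoint balls of radius $\epsilon_0/2$ have centers at distance $\ge \epsilon_0$, hence cannot both lie in a single ball of radius $\epsilon_0/4$ (whose diameter is $< \epsilon_0$), giving $M \le N$.

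Putting the two steps together: for $k$ large enough, $X_{j_k}$ contains at most $N$ pairwise disjoint balls of radius $\epsilon_0/2$, a number independent of $k$. But the hypothesis guarantees that $X_{j_k}$ contains at least $j_k$ disjoint balls of radius $\epsilon_0$, hence certainly at least $j_k$ disjoint balls of radius $\epsilon_0/2$ (same balls, smaller radius keeps them disjoint), and $j_k \to \infty$. This contradicts $j_k \le N$ for all large $k$, completing the proof. The main obstacle to watch is purely bookkeeping with the radii: one must choose the covering radius transported from $X_\infty$ small enough (here $\epsilon_0/4$, together with the convergence tolerance $\epsilon_0/8$) that the resulting balls in $X_{j_k}$ have diameter strictly less than $\epsilon_0$, so that they cannot each contain two centers of the given disjoint $\epsilon_0$-balls; all other steps are routine applications of total boundedness and the definition of Gromov-Hausdorff convergence.
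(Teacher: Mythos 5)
The paper does not prove this theorem; it simply cites Gromov, so there is no in-paper proof to compare against. Your overall strategy is the standard one and is sound: Gromov--Hausdorff convergence of a subsequence to a compact limit forces a uniform covering bound on the $X_{j_k}$, which is incompatible with the growing packing number demanded by the hypothesis. The first step (transporting the finite $\epsilon_0/4$-cover of $X_\infty$ into the $X_{j_k}$) is correct in spirit; note only that the transported radius should be $\epsilon_0/4 + 2h$ rather than $\epsilon_0/4 + h$, since you must pass from a point of $X_{j_k}$ to a nearby point of $X_\infty$ (cost $h$), then to a center of the cover (cost $\epsilon_0/4$), then back to the corresponding approximate center in $X_{j_k}$ (cost $h$ again). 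Choosing $h < \epsilon_0/16$, say, keeps the transported radius strictly below $\epsilon_0/2$, which is what you need.

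There is, however, one step that is wrong as written. You assert that ``two disjoint balls of radius $\epsilon_0/2$ have centers at distance $\ge \epsilon_0$.'' In a general compact metric space this is false: disjointness of $B(y,r)$ and $B(y',r)$ only forces $d(y,y') \ge r$ (the center of one ball cannot lie in the other), not $d(y,y') \ge 2r$. The stronger conclusion $\ge 2r$ requires midpoints, i.e.\ a geodesic or length-space assumption, which is not available here. Fortunately the fix is immediate and is already latent in your last paragraph: the hypothesis hands you $j$ disjoint balls of radius $\epsilon_0$, and their centers are automatically $\ge \epsilon_0$ apart. You should invoke this $\epsilon_0$-separation of centers directly, rather than passing through ``disjoint balls of radius $\epsilon_0/2$'' and trying to recover $\epsilon_0$-separation from that weaker data. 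With that correction, a covering of $X_{j_k}$ by $N$ balls of radius strictly less than $\epsilon_0/2$ (hence diameter strictly less than $\epsilon_0$) can contain at most one of the $j_k$ centers per ball, giving $j_k \le N$, which is absurd as $k \to \infty$. The rest of the argument and the concluding contradiction are fine.
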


In particular, if $(X_j, d_{X_j}) \GHto (X, d_X)$ then they have
a uniform upper bound on diameter.   Nor can they have
many splines, as in the following example:

\begin{ex} \label{ex-many-splines}
Let 
\be
M_j = \{(\theta,r): \,\,\, \theta\,\in \,S^1,\,\, r \,\in \,(\,1, 3+\cos(j\theta) \,) \,\}
\ee
with metric $g_j = dr^2 + r^2 d\theta^2$.  Then $\vol(M_j) \le \pi 4^2$
and $\diam(M_j) \le 3+\pi+3$ with $0$ sectional curvature.

Observe that in $M_j$, the balls of radius $1$ about 
$(2\pi k /j, 3)$ are disjoint because paths between these points
in $M_j$ must reach within $r\le2$ between the
splines and so have length $\ge 2(3-2)$.
Thus there are $j$ disjoint balls of radius $1$ in $M_j$
and no subsequence of the metric completions of
$M_j$ converge in the Gromov-Hausdorff
sense.
\end{ex}

\begin{ex}\label{ex-many-pages}
Let
\be
X_j = \big( [0,1]\times[0,1]\big) \,\disjointunion \, \big( [0,1]\times [0,1/2] \big)
\,\disjointunion \, \cdots \,\disjointunion \, \big( [0,1]\times [0,1/2^j] \big)
\ee
be a disjoint union of spaces with taxicab metrics glued with a gluing
map $\psi(0,y)=(0,y)$.  Then $X_j$ has no Gromov-Hausdorff
converging subsequence because it has $j$ disjoint balls of radius $1$ about points
$(1,0)$.   If we take surfaces $M_j$ as constructed in Proposition~\ref{prop-lattice}, such that
\be
d_{GH}(M_j, X_j)\to 0,
\ee
they also have no Gromov-Hausdorff converging subsequence.   
\end{ex}

Defining an appropriate compact metric space and applying   
Theorem~\ref{Blaschke}, in a later paper, \cite{Gromov-81a}(page 65), Gromov proved the following useful theorem.

\begin{thm} [Gromov] \label{common-Z}
If one has a sequence of compact metric spaces, $(X_j, d_{X_j})$,
such that $(X_j, d_{X_j}) \GHto (X_\infty, d_{X_\infty})$, then
there exists a common compact metric space $Z$ and
isometric embeddings $\varphi_j :(X_j,d_{X_j}) \to (Z, d_Z)$
such that $d_H(\varphi_j(X_j), \varphi_\infty(X_\infty))\to 0$.
\end{thm}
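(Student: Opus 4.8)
The plan is to deduce Theorem~\ref{common-Z} from the definition of Gromov-Hausdorff convergence together with a diagonalization argument and a compactness (Blaschke-type) step. Since $(X_j, d_{X_j}) \GHto (X_\infty, d_{X_\infty})$, for each $j$ there is a metric space $Z_j$ and isometric embeddings $\varphi_j^0 : X_j \to Z_j$, $\psi_j : X_\infty \to Z_j$ with $d_H^{Z_j}(\varphi_j^0(X_j), \psi_j(X_\infty)) \le \epsilon_j \to 0$. The first step is to replace each $Z_j$ by the union $\varphi_j^0(X_j) \cup \psi_j(X_\infty)$, which is compact (a union of two compact sets), so without loss of generality $Z_j$ is compact with $\diam(Z_j)$ bounded by $\diam(X_\infty) + 2 + \sup_j \diam(X_j)$, and the latter is finite by Gromov's Compactness Theorem (Theorem~\ref{GH-compactness-1}) applied to a convergent sequence.

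The second step is to build a single space $Z$ containing isometric copies of all the $X_j$ at once. I would take the disjoint union $W = X_\infty \sqcup \bigsqcup_j X_j$ and define a metric on it by gluing: for points $a, b$ in the same $X_j$ (or in $X_\infty$) use their intrinsic distance, and for $a \in X_i$, $b \in X_j$ with $i \neq j$ set
\be
\rho(a,b) = \inf\Big\{ d^{Z_i}\big(\varphi_i^0(a), \psi_i(c)\big) + d_{X_\infty}(c, c') + d^{Z_j}\big(\psi_j(c'), \varphi_j^0(b)\big) : c, c' \in X_\infty \Big\},
\ee
routing all cross-terms through the common space $X_\infty$, with the analogous one-sided formula when one of the points already lies in $X_\infty$. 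One checks this $\rho$ is a pseudometric (the triangle inequality follows because any chain can be rerouted through $X_\infty$), that it restricts to the original metric on each $X_j$ and on $X_\infty$ (here one uses that $\psi_j$ is an isometric embedding, so the detour through $X_\infty$ never shortens a genuine distance), and that $d_H^W(X_j, X_\infty) \le \epsilon_j \to 0$ in this metric. Passing to the metric quotient identifying points at $\rho$-distance zero gives a metric space $(W, \rho)$ with the required embeddings $\varphi_j$.

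The remaining obstacle is that $W$ as constructed is only separable and totally bounded, not necessarily compact, so the third step is to pass to its metric completion $Z = \bar W$; this is complete, and it is totally bounded because for any $\epsilon$ the finitely many sets $X_1, \dots, X_{j(\epsilon)}$ together with $X_\infty$ (each coverable by finitely many $\epsilon$-balls by Gromov's Compactness Theorem and Proposition~\ref{disjoint-to-cover}) $\epsilon$-cover $W$ once $\epsilon_j < \epsilon$ for $j > j(\epsilon)$ — since then $X_j \subset T_\epsilon(X_\infty)$. A complete totally bounded metric space is compact, so $Z$ is the desired common compact metric space, and the composed embeddings $X_j \hookrightarrow W \hookrightarrow Z$ satisfy $d_H^Z(\varphi_j(X_j), \varphi_\infty(X_\infty)) \le \epsilon_j \to 0$. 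The main subtlety to check carefully is that the glued pseudometric genuinely restricts to the original metrics — that is, that rerouting through $X_\infty$ cannot create a shortcut — which is exactly where the isometric (not merely distance-nonincreasing) nature of the maps $\psi_j$ is used.
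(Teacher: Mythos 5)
The paper does not prove Theorem~\ref{common-Z}; it states the result as background and cites Gromov (\cite{Gromov-81a}, p.~65), so there is no in-paper argument to compare against. Your construction is the standard direct proof and it is essentially correct: glue the disjoint union $X_\infty \sqcup \bigsqcup_j X_j$ along the common factor $X_\infty$ using the realizations $Z_j$ of the approximating Gromov--Hausdorff distances, route all cross-distances through $X_\infty$, verify the triangle inequality by rerouting chains through $X_\infty$ (this is where the isometric nature of $\psi_j$ is used, since $d^{Z_k}(\psi_k(c_1),\psi_k(c_2)) = d_{X_\infty}(c_1,c_2)$ lets you collapse the $Z_k$-leg of a chain into an $X_\infty$-leg), quotient by the null pseudometric, observe that the Hausdorff distance between the copies of $X_j$ and $X_\infty$ inside the glued space still equals $\epsilon_j$, and complete to get compactness via total boundedness (large $j$ land in a small tube around the compact $X_\infty$, and there are only finitely many small $j$).

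Two small points worth tightening. First, the phrase ``one checks that it restricts to the original metric on each $X_j$'' is a little misleading: you \emph{define} $\rho$ on same-component pairs to be the original metric, so what actually needs verification is the triangle inequality of $\rho$ across components, and there the isometry of $\psi_j$ is used to show the rerouting estimate $d^{Z_k}(\psi_k(c_1'),\varphi_k^0(e)) + d^{Z_k}(\varphi_k^0(e),\psi_k(c_2)) \ge d_{X_\infty}(c_1',c_2)$. Second, the stated diameter bound $\diam(Z_j) \le \diam(X_\infty) + 2 + \sup_j \diam(X_j)$ and the appeal to Theorem~\ref{GH-compactness-1} for finiteness of $\sup_j \diam(X_j)$ are superfluous (that finiteness follows from Theorem~\ref{GH-converse}, and in any case you never use a uniform diameter bound in the rest of the argument, only the total-boundedness step), so you could drop that sentence. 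Neither of these affects the validity of the proof.
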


\begin{thm} [Blaschke] \label{Blaschke}
If $Z$ is a compact metric space then every sequence of closed
subsets of $Z$ has a subsequence that converges in Hausdorff sense 
to a closed subset. 
\end{thm}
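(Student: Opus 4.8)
The plan is to prove the two statements (Blaschke's theorem and the theorem on a common compact $Z$) in the order they are stated, since the second relies on the first. Actually, since the excerpt ends with Blaschke's theorem, I focus on that. The plan is to use a diagonal argument together with the fact that a compact metric space is totally bounded and separable.

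First I would set up the machinery: since $Z$ is compact it is totally bounded, so for each $n \in \N$ there is a finite $1/n$-net $F_n \subset Z$, and $F = \bigcup_n F_n$ is a countable dense subset of $Z$. Given a sequence $C_j$ of closed subsets of $Z$, I would encode each $C_j$ by the data $\{d_Z(x, C_j) : x \in F\}$, a point in the compact product space $[0,\diam(Z)]^F$. Since this product is metrizable and compact (or directly by a diagonal argument over the countable set $F$), I can pass to a subsequence $C_{j_k}$ along which $d_Z(x, C_{j_k})$ converges for every $x \in F$; call the limit $f(x)$. Note $f$ is $1$-Lipschitz on $F$ (each $d_Z(\cdot, C_{j_k})$ is), so it extends to a $1$-Lipschitz function on all of $Z$, still denoted $f$.

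Next I would define the candidate limit set $C_\infty = \{z \in Z : f(z) = 0\}$, which is closed since $f$ is continuous, and I would argue $C_\infty$ is nonempty only if the $C_{j_k}$ don't escape — but here closedness and possible emptiness are fine; if one wants nonempty one restricts to the case the $C_j$ are nonempty and uses compactness to extract a convergent point. Then I would verify $d_H^Z(C_{j_k}, C_\infty) \to 0$ by checking the two inclusions. For $C_\infty \subset T_r(C_{j_k})$: given $z \in C_\infty$, pick $x \in F$ with $d_Z(x,z)$ small; then $f(x)$ is small, so $d_Z(x, C_{j_k})$ is small for large $k$, hence $d_Z(z, C_{j_k})$ is small. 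For $C_{j_k} \subset T_r(C_\infty)$: this is the more delicate direction. Given $y \in C_{j_k}$, I need a point of $C_\infty$ nearby. The idea is that $d_Z(x, C_{j_k})$ small (for $x$ near $y$) forces $f(x)$ small in the limit, and then a point $z$ where $f$ actually vanishes near $x$ must be produced; to get $f$ to vanish exactly, I would take, for a point $x_k \in F$ within $1/k$ of a point of $C_{j_k}$, a subsequential limit $x_\infty$ of the $x_k$, and show $f(x_\infty) = 0$ using that $f$ is $1$-Lipschitz and $d_Z(x_k, C_{j_k}) \to 0$ together with the uniform convergence-type estimate $|d_Z(x, C_{j_k}) - d_Z(x', C_{j_k})| \le d_Z(x,x')$.

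The main obstacle I expect is precisely this second inclusion: controlling $C_{j_k}$ from outside requires producing genuine limit points lying in $C_\infty$, and the cleanest way is to exploit compactness of $Z$ one more time to extract convergent sequences of "witness" points and then use the equicontinuity (uniform $1$-Lipschitz bound) of the functions $d_Z(\cdot, C_{j_k})$ to pass the vanishing to the limit. An alternative, perhaps slicker, route that I would keep in mind is to topologize the hyperspace of closed subsets of $Z$ directly with the Hausdorff metric and quote that this hyperspace is itself compact (which is the classical statement of Blaschke's selection theorem); but since the paper states it as a theorem to be used rather than proved, the self-contained diagonal argument above is the approach I would write out.
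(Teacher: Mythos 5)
The paper states Blaschke's selection theorem as a classical background result and does not supply a proof, so there is no ``paper's argument'' against which to compare yours. That said, the sketch you give is a correct and standard route. Your scheme --- pass from closed sets $C_j$ to the $1$-Lipschitz distance functions $d_Z(\cdot,C_j)$, diagonalize over a countable dense set $F$ to get a pointwise limit $f$, note $f$ is $1$-Lipschitz, take $C_\infty=f^{-1}(0)$, and prove $d_H^Z(C_{j_k},C_\infty)\to 0$ --- works, and is essentially the Arzel\`a--Ascoli proof found in e.g.\ Burago--Burago--Ivanov. The one place you should be explicit is the upgrade from pointwise convergence on $F$ to \emph{uniform} convergence of $d_Z(\cdot,C_{j_k})$ on $Z$: equicontinuity (the uniform $1$-Lipschitz bound) plus pointwise convergence on a dense subset of a totally bounded space gives uniform convergence, and both inclusions $C_\infty\subset T_r(C_{j_k})$ and $C_{j_k}\subset T_r(C_\infty)$ use it --- the first because the ``for $k$ large'' must not depend on $z\in C_\infty$, the second because the contradiction argument (take $y_k\in C_{j_k}$ with $d(y_k,C_\infty)\ge r$, extract $y_k\to y_\infty$, force $f(y_\infty)=0$) needs $d_Z(y_\infty,C_{j_k})\to f(y_\infty)$, which is exactly uniform convergence evaluated along a moving point. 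You should also deal with the empty-set cases up front (if infinitely many $C_j$ are empty pass to that subsequence, otherwise discard the finitely many empty ones), since $d_Z(\cdot,\emptyset)$ is not in $[0,\diam Z]^F$. The alternative, perhaps more common, textbook proof is to show directly that the hyperspace $(\mathcal{K}(Z),d_H)$ of nonempty closed subsets is complete (the limit of a Cauchy sequence is $\bigcap_n \overline{\bigcup_{k\ge n}C_k}$) and totally bounded (a finite $\varepsilon$-net of $Z$ yields a finite $2\varepsilon$-net of $\mathcal{K}(Z)$ by taking unions of the net points), hence compact; your function-space argument buys a slightly more concrete description of the limit set as a zero set, while the hyperspace argument generalizes more readily.
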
 

Theorem ~\ref{common-Z} implies the Gromov-Hausdorff Arzela-Ascoli Theorem:

\begin{thm}[Gromov] \label{GH-Arz-Asc}
If $X_j \GHto X$ and $Y_j \GHto Y$ and $f_j: X_j \to Y_j$
are equicontinuous,
\be
\forall \epsilon > 0\, \exists \delta_{\epsilon}>0 \textrm{ such that }
d_{X_j}(p,q)< \delta_\epsilon \implies d_{Y_j}(f_j(p), f_j(q)) < \epsilon,
\ee
then there is a subsequence with a continuous limit function
\be
f: X\to Y.
\ee
If the $f_j$ are isometric embeddings, then so is $f$.
\end{thm}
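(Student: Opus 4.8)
The plan is to prove the Gromov-Hausdorff Arzela-Ascoli Theorem (Theorem~\ref{GH-Arz-Asc}) by pulling everything into a single common compact metric space using Theorem~\ref{common-Z}, and then applying the classical Arzela-Ascoli theorem to the resulting sequence of maps between subsets of a fixed compact space. First I would apply Theorem~\ref{common-Z} twice: since $X_j \GHto X$, there is a compact metric space $Z_1$ and isometric embeddings $\varphi_j: X_j \to Z_1$ with $d_H^{Z_1}(\varphi_j(X_j), \varphi_\infty(X)) \to 0$; similarly there is a compact $Z_2$ and isometric embeddings $\psi_j: Y_j \to Z_2$ with $d_H^{Z_2}(\psi_j(Y_j), \psi_\infty(Y)) \to 0$. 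Replacing each $f_j$ by $\hat f_j = \psi_j \circ f_j \circ \varphi_j^{-1} : \varphi_j(X_j) \to Z_2$, we obtain genuine maps between subsets of the fixed compact spaces $Z_1$ and $Z_2$, and equicontinuity of the $f_j$ transfers verbatim to the $\hat f_j$ since $\varphi_j$ and $\psi_j$ are isometries.

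Next I would extend the domain. Each $\hat f_j$ is defined only on $\varphi_j(X_j) \subset Z_1$, not on all of $Z_1$, so to apply a clean Arzela-Ascoli argument I would either (a) work directly with a countable dense subset, or (b) extend $\hat f_j$ to a uniformly equicontinuous family on a neighborhood. The cleanest route is (a): fix a countable dense set $\{z_1, z_2, \dots\} \subset \varphi_\infty(X)$; for each $i$ and each $j$, choose a point $z_{i,j} \in \varphi_j(X_j)$ with $d_{Z_1}(z_{i,j}, z_i) \to 0$ as $j \to \infty$ (possible by the Hausdorff convergence). The sequence $(\hat f_j(z_{i,j}))_j$ lies in the compact space $Z_2$, so by a diagonal argument over $i$ we extract a subsequence along which $\hat f_j(z_{i,j})$ converges in $Z_2$ for every $i$; call the limit $g(z_i)$. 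Equicontinuity then shows $g$ is uniformly continuous on the dense set $\{z_i\}$ and hence extends uniquely to a continuous $g: \varphi_\infty(X) \to Z_2$, whose image lands in $\psi_\infty(Y)$ because that set is closed and each $\hat f_j(z_{i,j})$ lies in $\psi_j(Y_j) \subset T_{h_j}(\psi_\infty(Y))$ with $h_j \to 0$. Transporting back via the isometries, $f := \psi_\infty^{-1} \circ g \circ \varphi_\infty : X \to Y$ is the desired continuous limit function.

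To see that $f$ is actually the limit of the $f_j$ in the appropriate sense (and, in the isometric case, an isometric embedding), I would argue pointwise: given $x \in X$ with $\varphi_\infty(x) = z$, pick $z_i$ close to $z$, then $z_{i,j}$ close to $z_i$ in $\varphi_j(X_j)$, and chase the estimates $d_{Z_2}(\hat f_j(\text{point near }z), g(z)) \to 0$ using equicontinuity to control the error from replacing $z$ by $z_i$. If each $f_j$ is an isometric embedding, then $\hat f_j$ is an isometric embedding of $\varphi_j(X_j)$ into $Z_2$, so $d_{Z_2}(\hat f_j(z_{i,j}), \hat f_j(z_{k,j})) = d_{Z_1}(z_{i,j}, z_{k,j}) \to d_{Z_1}(z_i, z_k)$; passing to the limit gives $d_{Z_2}(g(z_i), g(z_k)) = d_{Z_1}(z_i, z_k)$, and density plus continuity upgrade this to all of $\varphi_\infty(X)$, so $g$ — hence $f$ — preserves distances.

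The main obstacle is bookkeeping rather than anything deep: the maps $\hat f_j$ are defined on moving domains $\varphi_j(X_j)$ that only Hausdorff-converge to $\varphi_\infty(X)$, so one cannot literally invoke classical Arzela-Ascoli, and care is needed to choose the approximating points $z_{i,j}$ consistently and to verify that all the triangle-inequality error terms (the Hausdorff distances $h_j$, the approximation errors $d_{Z_1}(z_{i,j}, z_i)$, and the equicontinuity modulus $\delta_\epsilon$) can be made simultaneously small along the chosen subsequence. Once the diagonal extraction and these estimates are organized, the continuity of $f$, the containment of its image in $Y$, and the isometry statement all follow by routine limiting arguments. I would also note in passing that Lemma~\ref{lem-H-balls} is the type of elementary Hausdorff-convergence fact being used implicitly when transferring closeness of points between the converging subsets.
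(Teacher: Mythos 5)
Your proposal is correct and follows precisely the route the paper indicates: the paper states only that Theorem~\ref{common-Z} implies Theorem~\ref{GH-Arz-Asc} without giving details, and you have filled in exactly that argument—embedding the sequences into common compact spaces via Theorem~\ref{common-Z}, running a diagonal Arzel\`a--Ascoli extraction over a countable dense set with moving approximating points, and transporting the limit back through the isometries.
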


In particular, if the $X_j$ are geodesic spaces, then
so is the limit space \cite{Gromov-metric}.  

\subsection{Gromov's Ricci Compactness Theorem}

In this section we review Gromov's Ricci Compactness Theorem
which is based on the Bishop-Gromov Volume Comparison Theorem 
\cite{Gromov-metric}:

\begin{thm} [Bishop-Gromov]
If $M$ is an $m$ dimensional Riemannian manifold with boundary 
that has nonnegative Ricci curvature
and $B_p(R)\subset M^m$
does not reach the boundary, then for all $r\in (0,R)$ we have
\be
\frac{\vol(B_p(r))}{\vol(B_p(R))} \ge \left( \frac{r}{R} \right)^m
\ee
\end{thm}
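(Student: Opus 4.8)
The plan is to reprove this by the classical geodesic polar coordinate argument together with the Bochner--Riccati comparison; the ``with boundary'' hypothesis enters only in justifying that this coordinate system behaves on $B_p(R)$ exactly as on a complete manifold. First, since $B_p(R)$ does not reach $\partial M$ we have $d_M(p,\partial M)\ge R$, so every minimizing geodesic from $p$ to a point $x$ with $d_M(p,x)<R$ stays in $M\setminus\partial M$: each of its points $y$ satisfies $d_M(y,\partial M)\ge d_M(p,\partial M)-d_M(p,y)>0$. Hence, writing $\gamma_\theta(t)=\exp_p(t\theta)$ for $\theta\in S^{m-1}\subset T_pM$ and letting $c(\theta)\in(0,\infty]$ be the supremum of times up to which $\gamma_\theta$ is defined, stays in $M\setminus\partial M$, and is minimizing, the radial geodesics foliate $B_p(R)$ up to a null set (the cut points), and in geodesic polar coordinates, with $\mathcal{A}(t,\theta)\,dt\,d\theta$ the Riemannian volume element,
\be
\vol(B_p(\rho))=\int_{S^{m-1}}\int_0^{\min(\rho,c(\theta))}\mathcal{A}(t,\theta)\,dt\,d\theta\qquad(0<\rho\le R).
\ee
I extend $\mathcal{A}(\cdot,\theta)$ by $0$ on $[c(\theta),\infty)$.

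Next I show that for each fixed $\theta$ the ratio $t\mapsto\mathcal{A}(t,\theta)/t^{m-1}$ is non-increasing on $(0,\infty)$ with $\lim_{t\to0^+}\mathcal{A}(t,\theta)/t^{m-1}=1$. Along $\gamma_\theta$ put $h(t)=\partial_t\log\mathcal{A}(t,\theta)$, which equals $\Delta r$ at $\gamma_\theta(t)$ with $r=d_M(p,\cdot)$. Bochner's formula applied to $r$ (using $|\nabla r|\equiv1$) gives $\partial_t(\Delta r)=-|\mathrm{Hess}\,r|^2-\mathrm{Ric}(\dot\gamma_\theta,\dot\gamma_\theta)$, and since $\mathrm{Hess}\,r$ kills $\nabla r$ and is symmetric on the $(m-1)$-dimensional space $\dot\gamma_\theta^{\perp}$, Cauchy--Schwarz gives $|\mathrm{Hess}\,r|^2\ge(\Delta r)^2/(m-1)$; with $\mathrm{Ric}\ge0$ this yields the Riccati inequality $h'(t)+h(t)^2/(m-1)\le 0$ for $0<t<c(\theta)$. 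The Euclidean density $h_0(t)=(m-1)/t$ satisfies this with equality, and the expansion $\mathcal{A}(t,\theta)=t^{m-1}\big(1-\tfrac16\mathrm{Ric}(\theta,\theta)t^2+\cdots\big)$ gives both $h(t)-(m-1)/t\to0$ and $h(t)\le(m-1)/t$ near $0$. A standard comparison for $u=h-(m-1)/t$ (it satisfies $u'\le-\tfrac1{m-1}(h+(m-1)/t)\,u$ with $h+(m-1)/t>0$, so $u$ cannot cross zero upward) then gives $h(t)\le(m-1)/t$, i.e.\ $\partial_t\log\!\big(\mathcal{A}(t,\theta)/t^{m-1}\big)\le0$, on $(0,c(\theta))$; since the ratio drops to $0$ at $c(\theta)$ and stays $0$, it is non-increasing on all of $(0,\infty)$, and the limit is read off from the expansion.

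Finally I combine this with an elementary fact: if $f\ge0$ on $(0,\infty)$ and $g(t):=f(t)/t^{m-1}$ is non-increasing, then for $0<r\le R$ one has $\int_r^R f\le g(r)\int_r^R t^{m-1}\,dt$ and $\int_0^r f\ge g(r)\int_0^r t^{m-1}\,dt$, hence $\int_r^R f\le\big((R^m-r^m)/r^m\big)\int_0^r f$ and therefore $\int_0^R f\le (R^m/r^m)\int_0^r f$, i.e.\ $\int_0^r f\big/\int_0^R f\ge(r/R)^m$. Applying this with $f(t)=\mathcal{A}(t,\theta)$ for each $\theta$ and integrating over $\theta\in S^{m-1}$ gives $\vol(B_p(r))\ge(r/R)^m\,\vol(B_p(R))$, as claimed.

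The substantive work is the second paragraph: deriving the Riccati differential inequality from Bochner's formula and pushing the comparison past the cut locus, where $\mathcal{A}$ has at worst a downward jump that only helps the monotonicity. The manifold-with-boundary hypothesis contributes nothing beyond the first paragraph, where one needs only that $B_p(R)$ not reaching $\partial M$ forces the minimizing geodesics from $p$ into $B_p(R)$ to remain in the interior, so that $\exp_p$, the cut locus, the volume element $\mathcal{A}$, and all the classical pointwise estimates are available on $B_p(R)$ verbatim.
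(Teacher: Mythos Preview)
Your proof is correct and follows the classical approach (geodesic polar coordinates, Riccati inequality from Bochner, then the integral comparison). However, there is nothing to compare it against: the paper states this theorem as a background result attributed to Bishop--Gromov and cites \cite{Gromov-metric}; it does not supply a proof of its own. So your write-up is not a reproof of something the authors did, but a self-contained justification of a result they quote from the literature. The one point worth noting is that your first paragraph is exactly the observation the paper implicitly relies on whenever it invokes Bishop--Gromov on an open manifold with boundary: the hypothesis that $B_p(R)$ does not reach $\partial M$ is there precisely so that the interior behaves like a complete manifold for the purposes of the radial comparison, and you have spelled this out correctly.
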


Gromov's Ricci Compactness Theorem was originally
stated for compact manifolds without boundary:

\begin{thm} [Gromov]\label{BGcomparison}
Let $m\in \N$, $D>0$ 
and let $\mathcal{M}^{m,D}$ be the class of compact
$m$ dimensional
Riemannian manifolds, $M$, with nonnegative Ricci curvature
and $\diam(M) \le D$.   Here the manifolds do not have boundary.
Then $\mathcal{M}^{m,D}$ is precompact with respect
to the Gromov-Hausdorff distance.
\end{thm}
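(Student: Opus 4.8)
The plan is to deduce Gromov's Ricci Compactness Theorem (Theorem~\ref{BGcomparison}) directly from the combinatorial compactness criterion of Theorem~\ref{GH-compactness-1}, using Bishop-Gromov to produce the required uniform covering function $N(\epsilon)$. Since every manifold in the class $\mathcal{M}^{m,D}$ has $\diam(M)\le D$, the diameter bound in Theorem~\ref{GH-compactness-1} is immediate; the whole task reduces to exhibiting a single function $N:(0,D]\to\N$, \emph{independent of the manifold $M$}, such that each $M\in\mathcal{M}^{m,D}$ can be covered by $N(\epsilon)$ balls of radius $\epsilon$.

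The key steps, in order, are as follows. First, by Proposition~\ref{disjoint-to-cover}, it suffices to bound the maximal number of pairwise disjoint balls of radius $\epsilon/2$ in $M$. Second, fix such a maximal disjoint family $B_{x_1}(\epsilon/2),\dots,B_{x_N}(\epsilon/2)$. Because $M$ has no boundary and $\diam(M)\le D$, each ball $B_{x_i}(D)$ is all of $M$ and in particular ``does not reach the boundary'', so the Bishop-Gromov inequality applies with $r=\epsilon/2$ and $R=D$ to give
\be
\frac{\vol(B_{x_i}(\epsilon/2))}{\vol(M)} \ge \left(\frac{\epsilon}{2D}\right)^m .
\ee
Third, apply Proposition~\ref{unif-lower} with $\Theta = (\epsilon/(2D))^m$: the disjoint balls have total volume at most $\vol(M)$, hence $N\Theta\,\vol(M) \le \vol(M)$, so $N \le (2D/\epsilon)^m$. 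Fourth, set $N(\epsilon) = \lceil (2D/\epsilon)^m \rceil$; this depends only on $m$, $D$, and $\epsilon$, so $\mathcal{M}^{m,D} \subset \mathcal{M}^{D,N}$, and Theorem~\ref{GH-compactness-1} yields precompactness with respect to the Gromov-Hausdorff distance.

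The only subtlety — and it is genuinely minor here — is making sure the hypotheses of Bishop-Gromov are met: one needs the ball $B_{x_i}(D)$ to avoid the boundary, which holds because the manifolds in $\mathcal{M}^{m,D}$ have \emph{no} boundary and $B_{x_i}(D) = M$ by the diameter bound. (This is exactly the point where the argument would break for manifolds with boundary, which is why the rest of the paper must work with the $\delta$ inner regions instead.) No compactness or completeness issue arises beyond this, since each $M$ is already compact; so the proof is essentially the two-line combination of Bishop-Gromov with Propositions~\ref{disjoint-to-cover} and~\ref{unif-lower}, capped by an appeal to Gromov's abstract compactness theorem. I do not anticipate any real obstacle.
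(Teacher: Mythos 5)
Your proof is correct and is precisely Gromov's original argument: Bishop-Gromov with $R=D$ (and $B_{x_i}(D)=M$ since there is no boundary) produces a uniform lower volume ratio, Propositions~\ref{unif-lower} and~\ref{disjoint-to-cover} convert this into a covering function $N(\epsilon)=\lceil(2D/\epsilon)^m\rceil$ independent of the manifold, and Theorem~\ref{GH-compactness-1} then gives precompactness. The paper states Theorem~\ref{BGcomparison} without reproving it (citing Gromov), but your argument is exactly the chain of reasoning the paper itself uses to prove the closely related ball version, Theorem~\ref{balls}, and the with-boundary analogue, Proposition~\ref{chain-counting}.
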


In fact, Gromov's Compactness Theorem has a commonly
used version applied to balls which we state as follows:

\begin{thm} [Gromov] \label{balls}
Let $m\in \N$, $D>0$ 
and let $\mathcal{M}^{m}$ be the class of compact
$m$ dimensional
Riemannian manifolds, $M$, with nonnegative Ricci curvature.   
Suppose $M_j \in \mathcal{M}^{m}$
and suppose $B_{p_j}(D)$ do not reach the boundary,
then there exists a subsequence such that $(B_{p_j}(D/3), d_{M_j})$
converges in the Gromov-Hausdorff distance.
\end{thm}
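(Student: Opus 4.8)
The plan is to derive this from the interior Bishop-Gromov inequality together with Gromov's metric compactness theorem (Theorem~\ref{GH-compactness-1}) and the disjoint-ball counting propositions (Proposition~\ref{disjoint-to-cover} and Proposition~\ref{unif-lower}). The key point is that although $M_j$ has a boundary, the ball $B_{p_j}(D)$ does not reach it, so inside $B_{p_j}(D)$ the manifold behaves exactly like a manifold without boundary and the Bishop-Gromov comparison applies verbatim to balls contained in $B_{p_j}(D)$.

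First I would fix the target: we want a uniform covering function $N(\epsilon)$ for the closed balls $\bar B_{p_j}(D/3)$ with the restricted metric $d_{M_j}$, so that Theorem~\ref{GH-compactness-1} gives a Gromov-Hausdorff convergent subsequence. To produce $N(\epsilon)$, take a maximal collection of pairwise disjoint balls $B_{x_i}(\epsilon/2)$ with centers $x_i \in \bar B_{p_j}(D/3)$; by Proposition~\ref{disjoint-to-cover} it suffices to bound the number of such balls. Each such ball is contained in $B_{p_j}(D/3 + \epsilon/2) \subset B_{p_j}(D)$ (for $\epsilon$ small, and one handles large $\epsilon$ trivially since then one ball covers everything), hence does not reach $\partial M_j$, so Bishop-Gromov applies. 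The issue is that we have no lower volume bound; this is handled by the standard relative argument: pick the center $x_{i_0}$ and note $B_{x_i}(\epsilon/2) \subset B_{x_{i_0}}(2D/3 + \epsilon/2) \subset B_{p_j}(D)$, so by Bishop-Gromov each $\vol(B_{x_i}(\epsilon/2)) \ge \big(\tfrac{\epsilon/2}{2D}\big)^m \vol(B_{x_{i_0}}(2D)) \ge c(m,D,\epsilon)\,\vol(B_{x_{i_0}}(\epsilon/2))$ after another comparison, and then summing the disjoint balls inside $B_{x_{i_0}}(2D)$ and comparing total volume to $\vol(B_{x_{i_0}}(2D))$ forces $N \le N(m, D, \epsilon)$, independent of $j$. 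Thus $\{\bar B_{p_j}(D/3)\} \subset \mathcal{M}^{D', N}$ for appropriate $D', N$ and Theorem~\ref{GH-compactness-1} yields the convergent subsequence.

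One technical nuisance I would address carefully: the statement refers to $(B_{p_j}(D/3), d_{M_j})$, i.e.\ the ball with the \emph{restricted} metric, which need not be a length space, so I should work with closed balls and Lemma~\ref{lem-H-balls} in mind, and note that compactness of these balls (as closed subsets of the complete manifold, since $\bar B_{p_j}(D/3) \subset B_{p_j}(D)$ which is precompact away from the boundary) is what lets Theorem~\ref{GH-compactness-1} apply. The main obstacle is precisely the bookkeeping around the absence of a volume lower bound: one must be careful that every ball invoked in a Bishop-Gromov comparison is genuinely contained in $B_{p_j}(D)$ and hence interior, and that the radius ratios chosen ($\epsilon/2$ versus $2D$, etc.) all stay within $(0,D)$; the constant $N(\epsilon)$ then depends only on $m$, $D$, $\epsilon$ and not on the individual manifold. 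Once that uniform bound is in hand the conclusion is immediate from Gromov's compactness theorem.
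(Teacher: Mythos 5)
Your overall strategy matches the paper's exactly: combine the interior Bishop--Gromov comparison with Propositions~\ref{disjoint-to-cover} and~\ref{unif-lower} to get a $j$-independent covering function $N(\epsilon)$, then invoke Gromov's compactness theorem. The paper does this by fixing an arbitrary $q\in B_{p_j}(D/3)$, observing $B_{p_j}(D/3)\subset B_q(2D/3)\subset B_{p_j}(D)$, and then chaining
\begin{equation*}
\frac{\vol(B_q(r))}{\vol(B_{p_j}(D/3))} \ge \frac{r^m}{(2D/3)^m}\cdot\frac{\vol(B_q(2D/3))}{\vol(B_{p_j}(D/3))} \ge \left(\frac{3r}{2D}\right)^m,
\end{equation*}
so that Proposition~\ref{unif-lower} applies directly with the reference set $B_{p_j}(D/3)$.

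However, the specific inequality you write is not correct as stated: you assert
$\vol(B_{x_i}(\epsilon/2)) \ge \bigl(\tfrac{\epsilon/2}{2D}\bigr)^m \vol(B_{x_{i_0}}(2D))$, but the ball $B_{x_{i_0}}(2D)$ has radius $2D>D$ and so can reach $\partial M_j$, which is exactly the situation the hypothesis rules out and without which Bishop--Gromov need not hold. The same problem affects the claimed containment $B_{x_{i_0}}(2D/3+\epsilon/2)\subset B_{p_j}(D)$ (with $x_{i_0}\in \bar{B}_{p_j}(D/3)$ this ball can extend to distance $D+\epsilon/2$ from $p_j$). You do flag in your final paragraph that ``one must be careful that every ball invoked in a Bishop--Gromov comparison is genuinely contained in $B_{p_j}(D)$,'' but the displayed comparison does not respect that constraint; you need to cap the outer radius at $2D/3$ so that $B_{x_i}(2D/3)\subset B_{p_j}(D)$, then use $B_{p_j}(D/3)\subset B_{x_i}(2D/3)$ to pass to the fixed reference volume $\vol(B_{p_j}(D/3))$, as the paper does. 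With that correction your argument is the paper's argument.
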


For completeness of exposition we show how Gromov's original
proof implies Theorem~\ref{balls}.

\begin{proof}
Let $q\in B_{p_j}(D/3)$.   Then 
\be
B_{p_j}(D/3) \subset B_q(2D/3)\subset B_{p_j}(D)
\ee
does not reach the boundary of $M_j$, so we may apply
the Bishop-Gromov Volume Comparison Theorem to see
that:
\begin{eqnarray}
\frac{\vol(B_q(r))}{\vol(B_{p_j}(D/3))} 
&\ge& \frac{r^m}{(2D/3)^m}\frac{ \vol(B_q(2D/3))}{\vol(B_{p_j}(D/3))} \\
&\ge& \frac{r^m}{(2D/3)^m}\frac{ \vol(B_{p_j}(D/3))}{\vol(B_{p_j}(D/3))} 
=\frac{(3r)^m}{(2D)^m}.
\end{eqnarray}
So now we may apply Proposition~\ref{unif-lower} to
complete the proof.
\end{proof}

\subsection{Volume Convergence Theorems}\label{subsect-vol-conv}

In \cite{Colding-volume}, Colding proved the following 
volume convergence theorem:

\begin{thm}[Colding] \label{thm-colding}
Let $M_j^m$ be complete Riemannian manifolds
with nonnegative Ricci curvature and $p_j\in M_j$ such that 
\be
B_{p_j}(1) \GHto B_0(1) \subset \E^m
\ee
where $\E^m$ is Euclidean space of dimension $m$, then
\be
\lim_{j\to\infty} \vol(B_{p_j}(1))= \vol(B_0(1)).
\ee 
\end{thm}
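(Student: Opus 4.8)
The plan is to squeeze $\vol(B_{p_j}(1))$ between quantities tending to $\omega_m:=\vol(B_0(1))$. The upper bound is free and does not use the Gromov-Hausdorff hypothesis at all: by the Bishop-Gromov Volume Comparison Theorem the ratio $r\mapsto \vol(B_{p_j}(r))/r^m$ is nonincreasing with limit $\omega_m$ as $r\to 0^+$, so $\vol(B_{p_j}(1))\le\omega_m$ for every $j$, and hence $\limsup_j \vol(B_{p_j}(1))\le\omega_m$. Everything therefore reduces to the lower bound: after passing to a subsequence we may assume $\vol(B_{p_j}(1))\to V\le\omega_m$, and we must rule out $V<\omega_m$.

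For the lower bound I would localize and invoke the quantitative almost-rigidity machinery of Cheeger-Colding, whose analytic engine is the segment inequality together with the Bochner formula. Fix a small scale $s>0$ and a radius $\rho<1-s$. For any $x_j\in B_{p_j}(\rho)$ the metric ball $B_{x_j}(s)$ lies inside $B_{p_j}(1)$, so the Gromov-Hausdorff convergence restricts to $B_{x_j}(s)\GHto B_{x_\infty}(s)\subset\E^m$, a Euclidean $s$-ball. On $B_{x_j}(s)$ one solves Dirichlet problems to produce harmonic functions $b^1_j,\dots,b^m_j$ whose boundary traces mimic Euclidean coordinate functions carried through the approximation. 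The segment inequality controls the average of $\big|\,|\nabla b^i_j|-1\,\big|$ on $B_{x_j}(s)$; the Poincar\'e inequality together with the Bochner formula integrated against a Cheeger-Colding cutoff (with dimensional bounds on its gradient and Laplacian) controls the average of $|\mathrm{Hess}\,b^i_j|^2$ on $B_{x_j}(s/2)$; and together these force the average of $\big|\langle\nabla b^i_j,\nabla b^l_j\rangle-\delta_{il}\big|$ on $B_{x_j}(s/2)$ to be at most a quantity $\Psi$ depending only on $m$ and the localized Gromov-Hausdorff distance, with $\Psi\to 0$ as that distance $\to 0$. Then the Lipschitz map $\Phi_j=(b^1_j,\dots,b^m_j)\colon B_{x_j}(s/2)\to\R^m$ has Jacobian $L^1$-close to $1$ and approximately surjects onto a Euclidean ball of radius $\approx s/2$, so the area formula gives $\vol(B_{x_j}(s/2))\ge (1-\Psi)\,\omega_m (s/2)^m$. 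Consequently $\vol(B_x(s/2))/\big(\omega_m(s/2)^m\big)\to 1$ uniformly over $x\in B_{p_j}(\rho)$ as $j\to\infty$; in particular, taking $x=p_j$, one already gets $\liminf_j \vol(B_{p_j}(1))\ge \omega_m 2^{-m}$.

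The step I expect to be the main obstacle is upgrading this to the sharp bound $\liminf_j \vol(B_{p_j}(1))\ge\omega_m$. The harmonic-coordinate construction is only effective at scales small relative to the scale on which the geometry is controlled, so a single application on $B_{p_j}(1)$ loses a dimensional factor, and one must propagate near-maximality of all the small balls out to the unit ball. This can be done by running the same almost-rigidity analysis on the annuli $B_{p_j}(1)\setminus B_{p_j}(s)$, which also Gromov-Hausdorff converge to Euclidean annuli, together with Cheeger-Colding's quantitative volume-cone/metric-cone rigidity to control the radial structure; equivalently one argues by induction on the scale. A secondary difficulty, present throughout, is that we are given only Gromov-Hausdorff convergence and no convergence of the metrics $g_j$ themselves, so every estimate (Dirichlet solvability, the segment and Poincar\'e inequalities, the cutoff, and the Bochner argument) must be carried out with constants depending only on $m$ and the nonnegativity of Ricci curvature. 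The full details are carried out in \cite{Colding-volume} and \cite{ChCo-PartI}.
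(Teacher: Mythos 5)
The paper does not prove Theorem~\ref{thm-colding} at all; it is stated verbatim in the Background section (Subsection on Volume Convergence Theorems) as a cited result from \cite{Colding-volume}, so there is no in-paper argument to compare against. The only commentary the authors offer is Remark~\ref{rmrk-colding-1}, which observes that the published proof does not truly need global completeness and nonnegative Ricci curvature but only that $B_{p_j}(2)$ stays away from the boundary of $\bar{M}_j^m$ --- a remark that implicitly confirms the scale-loss concern you raise at the end of your outline.

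As a description of the actual argument in the literature, your sketch is a reasonable high-level account of the Cheeger--Colding refinement (harmonic replacements of distance approximations, segment inequality, Bochner with cutoff, $L^1$-Jacobian estimate, area formula) rather than Colding's original proof, which works directly with distance functions to points far from $p_j$ and a degree/area-formula argument without passing to harmonic replacements. Your observation that a naive single-scale application only yields $\liminf_j\vol(B_{p_j}(1))\ge \omega_m 2^{-m}$ and that the sharp bound requires controlling the geometry on a larger concentric ball (or arguing across annular scales) is precisely why Colding's proof uses $B_{p_j}(2)$ (or larger); note that Bishop--Gromov monotonicity goes the wrong way for propagating near-maximal volume ratio from a small ball outward, so the scale loss cannot be dismissed by monotonicity alone, as you correctly anticipate. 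Since this is a background theorem quoted from \cite{Colding-volume}, there is nothing further in the present paper to check your proposal against.
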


\begin{rmrk}\label{rmrk-colding-1}
The proof of this theorem does not in fact require global nonnegative
Ricci curvature on a complete manifold.   In fact $M_j^m$ could
be an open manifold as long as $B_{p_j}(2)\subset \bar{M}_j^m$
does not hit the boundary.   In fact one may not even need a radius
of $2$.
\end{rmrk}

Colding applied this theorem to prove a number of theorems including
one in which the Gromov-Hausdorff limit is an arbitrary compact 
Riemannian manifold of the same dimension (also \cite{Colding-volume}):

\begin{thm}[Colding] \label{thm-colding-2}
Let $M_j^m$ and $M_\infty^m$ be compact Riemannian manifolds
with nonnegative Ricci curvature for $j=1,2,3,...$ such that
\be
M_j^m \GHto M_\infty^m.
\ee
Then for all $r>0$ and for all $p_j\in M_j$ such that $p_j \to p_\infty$ we have
\be
\lim_{j\to\infty} \vol(B_{p_j}(r))= \vol(B_{p_\infty}(r)).
\ee 
\end{thm}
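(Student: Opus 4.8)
The plan is to prove the stronger statement that, after isometrically embedding all the manifolds into a common compact metric space, the Riemannian volume measures of the $M_j$ converge weakly-$*$ to the Riemannian volume measure of $M_\infty$; convergence of ball volumes then falls out. Since $M_j \GHto M_\infty$ with $M_\infty$ compact, the $M_j$ eventually share a diameter bound $D$, so by the Bishop--Gromov inequality $\vol(M_j)\le\omega_m D^m$ ($\omega_m$ the volume of the unit ball in $\E^m$). Applying Theorem~\ref{common-Z}, take isometric embeddings $\varphi_j:M_j\to Z$ and $\varphi_\infty:M_\infty\to Z$ into a common compact $Z$ with $\varphi_j(M_j)\Hto\varphi_\infty(M_\infty)$, and set $\mu_j:=(\varphi_j)_\#\vol_{M_j}$, a Borel measure on $Z$ of total mass $\le\omega_m D^m$. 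Passing to a subsequence, $\mu_j\weaklyto\mu$ weakly-$*$, where $\mu$ is a finite Borel measure supported on $\varphi_\infty(M_\infty)$, which I identify henceforth with $M_\infty$. Because each $\varphi_j$ is an isometric embedding, $\bar B^Z_{\varphi_j(x_j)}(\rho)\cap\varphi_j(M_j)=\varphi_j(\bar B^{M_j}_{x_j}(\rho))$ for any $x_j\in M_j$, so $\mu_j(B_{x_j}(\rho))=\vol(B^{M_j}_{x_j}(\rho))$.

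I also record a stability form of Theorem~\ref{thm-colding}: for every $\vare>0$ there is $\eta=\eta(\vare,m)>0$ so that if $N^m$ has $\Ricci\ge0$, $\bar B_p(2)\subset N$ avoids the boundary, and $d_{GH}(\bar B_p(1),\bar B_0(1)\subset\E^m)<\eta$, then $|\vol(B_p(1))-\omega_m|<\vare$; this is immediate from Theorem~\ref{thm-colding} by contradiction, since a sequence of counterexamples would have $\bar B_{p_k}(1)\GHto\bar B_0(1)$ with volumes bounded away from $\omega_m$. Rescaling by $\rho^{-2}$ gives the scale-invariant version: if also $d_{GH}(\bar B_p(\rho),\bar B_0(\rho)\subset\E^m)<\eta\rho$ and $\bar B_p(2\rho)$ is interior, then $|\vol(B_p(\rho))-\omega_m\rho^m|<\vare\rho^m$. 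The heart of the argument is now to show $\mu=\vol_{M_\infty}$. Fix $x\in M_\infty$ and choose $x_j\in M_j$ with $\varphi_j(x_j)\to\varphi_\infty(x)$. As $M_\infty$ is a smooth compact Riemannian manifold, there is $\rho_0=\rho_0(\vare)>0$, uniform in $x$ by compactness, with $d_{GH}(\bar B^{M_\infty}_x(\rho),\bar B_0(\rho)\subset\E^m)<\tfrac{\eta}{2}\rho$ and $|\vol(B^{M_\infty}_x(\rho))-\omega_m\rho^m|<\vare\rho^m$ for all $\rho<\rho_0$. For such $\rho$, Lemma~\ref{lem-H-balls} applied to $A_j=\varphi_j(M_j)$ and $a_j=\varphi_j(x_j)$ gives $d_{GH}(\bar B^{M_j}_{x_j}(\rho),\bar B^{M_\infty}_x(\rho))\to0$, so $d_{GH}(\bar B^{M_j}_{x_j}(\rho),\bar B_0(\rho))<\eta\rho$ for $j$ large, whence the stability statement yields $|\vol(B^{M_j}_{x_j}(\rho))-\omega_m\rho^m|<\vare\rho^m$ for $j$ large. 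Meanwhile, for all but countably many $\rho<\rho_0$ we have $\mu(\partial B_x(\rho))=0$, and then the portmanteau properties of weak-$*$ convergence (using $\varphi_j(x_j)\to\varphi_\infty(x)$) give $\mu(B_x(\rho))=\lim_j\mu_j(B_{x_j}(\rho))=\lim_j\vol(B^{M_j}_{x_j}(\rho))$. Hence $|\mu(B_x(\rho))-\vol(B^{M_\infty}_x(\rho))|\le 2\vare\rho^m$ for these $\rho$; by monotonicity in $\rho$ this persists, with $2\vare$ replaced by a dimensional multiple $C_m\vare$, for every $\rho<\rho_0$, and letting $\vare\to0$ gives $\lim_{\rho\to0}\mu(B_x(\rho))/\vol(B^{M_\infty}_x(\rho))=1$ at every $x\in M_\infty$. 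Since two finite Borel measures on a Riemannian manifold whose ratio on concentric balls tends to $1$ at every point must coincide, $\mu=(\varphi_\infty)_\#\vol_{M_\infty}$.

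With $\mu=\vol_{M_\infty}$ in hand the theorem follows: by the coarea formula $\vol_{M_\infty}(\partial B_{p_\infty}(r))=0$ for a.e.\ $r>0$, so the portmanteau properties (with $\varphi_j(p_j)\to\varphi_\infty(p_\infty)$) give $\vol(B_{p_j}(r))=\mu_j(B_{p_j}(r))\to\mu(B_{p_\infty}(r))=\vol(B_{p_\infty}(r))$ for a.e.\ $r$ along the subsequence; continuity of $r\mapsto\vol(B_{p_\infty}(r))$ together with monotonicity of each $r\mapsto\vol(B_{p_j}(r))$ then forces it for every $r>0$. Finally, every subsequence of the original sequence has a further subsequence to which this argument applies and which yields the \emph{same} limit measure $\vol_{M_\infty}$; therefore $\vol(B_{p_j}(r))\to\vol(B_{p_\infty}(r))$ for the whole sequence.

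The genuinely hard input — the almost-rigidity estimates behind Theorem~\ref{thm-colding} — is assumed here, so it is not the obstacle. The points that need care are the measure-theoretic identification $\mu=\vol_{M_\infty}$ (checking the density equals $1$ at \emph{every} point of $M_\infty$, not merely almost every point, and invoking the differentiation theorem for Radon measures on a Riemannian manifold correctly) and the bookkeeping that identifies the intrinsic Riemannian balls $B^{M_j}_{x_j}(\rho)$ with the sets $\bar B^Z_{\varphi_j(x_j)}(\rho)\cap\varphi_j(M_j)$ entering the Hausdorff convergence — which is clean in this setting only because each $M_j$ is a complete manifold isometrically embedded in $Z$.
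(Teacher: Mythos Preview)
The paper does not supply its own proof of this theorem; it is quoted as background from Colding \cite{Colding-volume}.  The only indication of method is Remark~\ref{rmrk-colding-2}, which says Colding's argument proceeds by covering $B_{p_j}(r)$ with countable collections of small balls $B_{q_{j,i}}(\epsilon_{j,i})$ and applying Theorem~\ref{thm-colding} to each of those small balls directly.

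Your argument is correct and, while it rests on the same local input (the stability form of Theorem~\ref{thm-colding} on small balls), it is packaged differently and in fact proves more: you establish weak-$*$ convergence of the pushforward volume measures $\mu_j\weaklyto\vol_{M_\infty}$ on the common space $Z$, and then read off ball volumes via portmanteau.  Colding's route is a direct covering-and-summing estimate on a single ball; yours trades that combinatorics for a measure-differentiation step (the identification $\mu=\vol_{M_\infty}$ from $\mu(B_x(\rho))/\vol(B_x(\rho))\to 1$ at every point, which is legitimate since $\vol_{M_\infty}$ is doubling on the compact manifold $M_\infty$).  The points you flag as delicate are the right ones: the moving-center portmanteau step and the fact that the density ratio is shown to be $1$ at \emph{every} point (so the singular part of $\mu$ vanishes) both go through, and the identification of intrinsic balls with $Z$-balls intersected with the image is unproblematic here because the $M_j$ are closed manifolds isometrically embedded in $Z$.
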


\begin{rmrk}\label{rmrk-colding-2}
Again Colding's proof does not really require $M_j$ to be complete.
These $M_j$ could be open Riemannian manifolds as long
as $B_{p_j}(r) \subset \bar{M}_j$ does not hit the boundary.
Here we do not need to worry about twice the radius because the
proof involves estimating countable collections of small balls 
$B_{q_{j,i}}(\epsilon_{j,i})$ in
$B_{p_j}(r)$ and applying Theorem~\ref{thm-colding} to those
small balls and one can always ensure the 
$B_{q_{j,i}}(2\epsilon_{j,i})$ avoid the boundary
as in Remark~\ref{rmrk-colding-1}
\end{rmrk}

Cheeger-Colding then conducted a study of the properties
of Gromov-Hausdorff limits of manifolds of nonnegative
Ricci curvature in \cite{ChCo-PartI}.   They improve
upon Theorem~\ref{thm-colding-2}, allowing $M_\infty$ to be an 
arbitrary limit space as long as the sequence is noncollapsing:

\begin{thm}[Cheeger-Colding] \label{thm-chco}
Let $V_0>0$ and let $M_j^m$ be compact Riemannian manifolds
with nonnegative Ricci curvature for $j=1,2,3,...$, such that
\be
M_j^m \GHto M_\infty^m \textrm{ and } \vol(M_j^m) \ge V_0.
\ee
Then for all $r>0$ and for all $p_j\in M_j$ such that $p_j \to p_\infty\in M_\infty$ we have
\be
\lim_{j\to\infty} \vol(B_{p_j}(r))= \mathcal{H}^m(B_{p_\infty}(r))
\ee 
where $\mathcal{H}^m$ is the Hausdorff measure of dimension $m$.
\end{thm}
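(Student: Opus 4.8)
The plan is to realize $\lim_{j\to\infty}\vol(B_{p_j}(r))$ as the mass of a ball in a weak limit of the Riemannian volume measures and then to identify that limit measure with $\mathcal{H}^m$. Since each $M_j$ is compact with nonnegative Ricci curvature and $\diam(M_j)\to\diam(M_\infty)$, fix $D$ with $\diam(M_j)\le D$ for all $j$; Bishop--Gromov then gives the uniform two-sided bound $(V_0/D^m)\,r^m\le\vol(B_x(r))\le\omega_m r^m$ for every $x\in M_j$ and $0<r\le D$, where $\omega_m$ is the volume of the unit ball in $\E^m$, and in particular $\vol(M_j)\le\omega_m D^m$. Using Theorem~\ref{common-Z} I would isometrically embed every $M_j$ and $M_\infty$ into one compact metric space $Z$ with $d_H(M_j,M_\infty)\to 0$, push the measures $\mu_j:=\vol_{g_j}$ forward to $Z$, and extract a subsequence with $\mu_j\weaklyto\mu_\infty$ for a Radon measure $\mu_\infty$ supported on $M_\infty$. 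Comparing inner and outer balls, in the spirit of Lemma~\ref{lem-H-balls}, upgrades weak convergence to $\vol(B_{p_j}(r))=\mu_j(B_{p_j}(r))\to\mu_\infty(B_{p_\infty}(r))$ for every $r$ outside the at most countable set where $\mu_\infty(\partial B_{p_\infty}(r))>0$. It therefore suffices to prove $\mu_\infty=\mathcal{H}^m\rstr M_\infty$, after which the leftover radii and the passage from a subsequence to the full sequence follow from monotonicity in $r$ and the subsequence-independence of $\mathcal{H}^m(B_{p_\infty}(r))$.

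I would next extract a density. Given $x\in M_\infty$, pick $x_j\in M_j$ with $x_j\to x$. Bishop--Gromov makes $r\mapsto\vol(B_{x_j}(r))/r^m$ nonincreasing, and passing this monotonicity through the weak limit (along good radii, then to all radii by left continuity) shows $r\mapsto\mu_\infty(B_x(r))/r^m$ is nonincreasing; hence $\theta(x):=\lim_{r\to 0^+}\mu_\infty(B_x(r))/(\omega_m r^m)$ exists for every $x$, and the two-sided bounds force $V_0/(\omega_m D^m)\le\theta(x)\le 1$. Thus $\mu_\infty$ and $\mathcal{H}^m\rstr M_\infty$ are mutually absolutely continuous, $M_\infty$ has Hausdorff dimension exactly $m$, and --- invoking the $\mathcal{H}^m$-rectifiability of noncollapsed Ricci limit spaces and the density comparison theorems for measures --- the identity $\mu_\infty=\mathcal{H}^m\rstr M_\infty$ is reduced to the single claim that $\theta(x)=1$ for $\mathcal{H}^m$-a.e.\ $x$.

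Proving $\theta\equiv 1$ almost everywhere is the main obstacle, and it is exactly where the deep part of Cheeger--Colding's theory is needed. The almost-rigidity machinery of Cheeger--Colding (the almost-volume-cone implies almost-metric-cone theorem, and the almost-splitting theorem built on the Abresch--Gromoll excess estimate), together with a dimension count on the singular strata, shows that at $\mathcal{H}^m$-a.e.\ $x\in M_\infty$ some tangent cone splits off $\R^m$ and hence, for dimensional reasons, is isometric to $\E^m$; equivalently there are $r_i\to 0$ with $(M_\infty,r_i^{-1}d_\infty,x)\GHto(\E^m,0)$. Fixing such an $x$, for each $i$ the sequence $(M_j,r_i^{-2}g_j,x_j)$ converges as $j\to\infty$ to $(M_\infty,r_i^{-1}d_\infty,x)$, whose unit ball is GH-close to $B_0(1)\subset\E^m$; so a diagonal sequence $(M_{j(i)},r_i^{-2}g_{j(i)},x_{j(i)})$ has unit balls Gromov--Hausdorff converging to $B_0(1)\subset\E^m$, and Colding's volume convergence theorem (Theorem~\ref{thm-colding}) gives $r_i^{-m}\vol(B_{x_{j(i)}}(r_i))\to\omega_m$. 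Choosing $j(i)$ also large enough that $\vol(B_{x_{j(i)}}(r_i))$ is within $r_i^m/i$ of $\mu_\infty(B_x(r_i))$ (possible since $\vol(B_{x_j}(r_i))\to\mu_\infty(B_x(r_i))$ for $r_i$ a good radius), we conclude $r_i^{-m}\mu_\infty(B_x(r_i))\to\omega_m$, hence $\theta(x)=1$. For the a.e.\ Euclidean tangent cone statement (and the rectifiability used above) I would cite \cite{ChCo-PartI} rather than reproduce the proof.

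Finally, with $\theta\equiv 1$ holding $\mathcal{H}^m$-a.e., the density comparison yields $\mu_\infty=\mathcal{H}^m\rstr M_\infty$, so $\vol(B_{p_j}(r))\to\mathcal{H}^m(B_{p_\infty}(r))$ for all but countably many $r$. Since both sides are nondecreasing in $r$ and the left-hand limit does not depend on the chosen subsequence, a squeeze between nearby good radii upgrades the convergence to every $r>0$ along the full sequence, which is the assertion of the theorem.
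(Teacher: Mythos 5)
This theorem is not proved in the paper at all: it appears in Section~\ref{subsect-vol-conv} as a cited result attributed to Cheeger--Colding, with \cite{ChCo-PartI} and \cite{Colding-volume} as the references, and the authors only add Remark~\ref{rmrk-chco} observing that the proof is local and hence applies to open manifolds whose relevant balls avoid the boundary. So there is no ``paper's own proof'' to compare against; the honest comparison is between your sketch and the Cheeger--Colding argument from the literature.

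As such a sketch, your outline is largely correct and follows the standard strategy: embed into a common $Z$ (Theorem~\ref{common-Z}), pass to a weak-$*$ limit measure $\mu_\infty$, use Bishop--Gromov monotonicity to produce a finite, positive density $\theta$, identify $\theta\equiv 1$ a.e.\ via the a.e.\ Euclidean-tangent-cone theorem together with Theorem~\ref{thm-colding} applied along a diagonal of rescalings, and then conclude $\mu_\infty=\mathcal{H}^m\rstr M_\infty$. Two cautions are worth flagging. First, the logical order in Cheeger--Colding runs the other way from what you suggest: the $\mathcal{H}^m$-rectifiability of noncollapsed Ricci limits is \emph{deduced from} volume convergence (and the stratification of the singular set), so invoking rectifiability as a black box to finish the density argument risks circularity; the cleaner route, and the one actually used, is to run a Vitali-type covering argument directly at the full-measure set of points with Euclidean tangent cone, squeezing $\mu_\infty$ between $\mathcal{H}^m$ bounds without presupposing rectifiability. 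Second, the density comparison step as you phrase it (``$\theta\equiv 1$ a.e.\ plus density comparison gives $\mu_\infty=\mathcal{H}^m$'') is only sharp up to dimensional constants (the usual GMT bounds have a $2^m$ in one direction) unless rectifiability or a covering argument is already in hand, which again points to the covering route. With those two fixes the sketch is an accurate, if compressed, account of the Cheeger--Colding proof; but for the purposes of this paper the theorem is simply quoted, so a citation in place of a proof is what is actually expected here.
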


\begin{rmrk} \label{rmrk-chco}
Again this theorem is proven locally, so as in Remark~\ref{rmrk-colding-2}
this theorem holds when $M_j^m$ are open Riemannian manifolds
as long as $B_{p_j}(r)\subset \bar{M}_j^m$ do not touch the boundary.
\end{rmrk}

Of course, Cheeger and Colding study more than just manifolds
with nonnegative Ricci curvature and more than just noncollapsing
sequences in their work, but these theorems are the only ones needed 
in this paper.   See also work of the second author with Wei for
an adaption of their volume convergence theorem which deals with
Hausdorff measures defined using restricted vs intrinsic distances
\cite{SorWei1}.

\section{Properties of Inner Regions} \label{sect-inner}

Given an open Riemannian manifold, $M$, we have
defined the $\delta$ inner region, $M^\delta$ in 
Definition~\ref{defn-1}.  Note that these spaces are open
Riemannian manifolds, however we will study them using
the restricted distance, $d_M$, rather than the intrinsic
length metric, $d_{M^\delta}$, defined in (\ref{intrinsic-distance}).
There are natural isometric embeddings of
$(M^\delta, d_M)$ and its metric completion
$(\bar{M}^\delta, d_M)$ into $(M, d_M)$.  Thus the metric
completion is, in fact, compact when $M$ is precompact.
This occurs, for example, when $M$ has finite diameter.

\begin{ex}\label{ex-spline-1}
In Figure~\ref{graph-spline-1}, we depict a single flat manifold,
$M^2$, which is a flat disk with a spline attached.  For a sequence
of $\delta_1<\delta_2<\delta_3< \delta_4$, the grey inner regions
depict $M^{\delta_i}$.  For $\delta$ sufficiently large $M^\delta$
is an empty set. 
\end{ex}

\begin{figure}
\includegraphics[width=100mm]{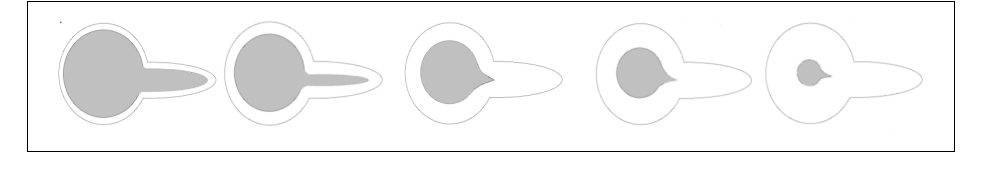}
\label{graph-spline-1}
\caption{Example~\ref{ex-spline-1}: Single $M$ varying $\delta$}
\end{figure}

\begin{lem} \label{lem-exhaust}
For any sequence $\delta_i \to 0$, we have
\be 
M=\bigcup_{i=1}^\infty M^{\delta_i}.
\ee
In fact,
\be
M = \bigcup_{\delta>0} M^\delta.
\ee
\end{lem}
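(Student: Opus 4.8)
The plan is to prove both displayed identities at once by reducing them to the single fact that $d_M(x,\bdry M)>0$ for every $x\in M$. The inclusions $\bigcup_{i}M^{\delta_i}\subset M$ and $\bigcup_{\delta>0}M^\delta\subset M$ are immediate from Definition~\ref{defn-1}. For the reverse inclusions, fix $x\in M$ and set $\epsilon_x:=d_M(x,\bdry M)$, the distance being taken in the metric completion $\bar M$. If $\epsilon_x>0$, then $x\in M^\delta$ for every $\delta$ with $0<\delta<\epsilon_x$; in particular $x\in\bigcup_{\delta>0}M^\delta$, and since $\delta_i\to 0$ there is an index $i$ with $\delta_i<\epsilon_x$, so $x\in M^{\delta_i}\subset\bigcup_i M^{\delta_i}$. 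Thus everything comes down to the positivity of $\epsilon_x$.

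To prove $\epsilon_x>0$, I would work inside a single coordinate chart, exploiting that a Riemannian manifold is locally bi-Lipschitz to Euclidean space. Choose open sets $x\in U'\subset\bar{U'}\subset U''\subset\bar{U''}\subset U$ in $M$ with $\bar{U''}$ compact and a chart $\varphi\colon U\to\R^m$. On $\bar{U''}$ the metric $g$ is uniformly comparable to the pulled-back Euclidean metric, so there is $r>0$ so small that any rectifiable curve in $M$ issuing from a point of $U'$ of $g$-length $<r$ stays inside $U''$; consequently, for $p,q\in U'$ with $d_M(p,q)<r$ a near-minimizing path between them stays in $U''\subset U$, where $d_M(p,q)$ is comparable to $|\varphi(p)-\varphi(q)|$. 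It follows that the set $\{y\in\bar M: d_M(x,y)\le r/2\}$ is contained in $M$: such a $y$ is the limit of a Cauchy sequence $y_n\in M$ with $d_M(x,y_n)\to d_M(x,y)\le r/2$, so the $y_n$ eventually lie in $U'$, their images $\varphi(y_n)$ are Cauchy in the compact set $\varphi(\bar{U'})$ and hence converge to $\varphi(z)$ for some $z\in U\subset M$, and then $d_M(y_n,z)\to 0$ forces $y=z\in M$. Therefore no point of $\bdry M=\bar M\setminus M$ lies within distance $r/2$ of $x$, i.e.\ $\epsilon_x\ge r/2>0$, completing the proof.

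The only real obstacle is this positivity statement — equivalently, that $M$ is open in its metric completion $\bar M$ — and the care it requires comes precisely from the fact that we assume neither completeness of $M$ nor any regularity of $\bdry M$. Localizing the whole argument inside a relatively compact coordinate chart, where $g$ is two-sided comparable to the Euclidean metric, is what lets the Cauchy-sequence argument run; once that is in place the two set-theoretic identities follow immediately and uniformly in the choice of the sequence $\delta_i\to 0$.
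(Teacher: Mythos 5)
Your argument is correct and follows the same basic route as the paper: reduce both identities to the single claim that $\epsilon_x := d_M(x,\partial M) > 0$ for every $x\in M$, and then pick any $\delta<\epsilon_x$ (resp. $\delta_i<\epsilon_x$). The only difference is in how much is said about that positivity. The paper dispatches it in one clause, ``since $M$ is open, $\varepsilon = d_M(x,\partial M)>0$,'' treating the fact that a Riemannian manifold is open in its metric completion as self-evident, whereas you actually prove it via a relatively compact coordinate chart in which $g$ is two-sided comparable to the Euclidean metric, showing that a $d_M$-Cauchy sequence near $x$ must converge to a point of $M$. That chart argument is sound; the one cosmetic slip is that the approximating points $y_n$ land in $U''$ rather than $U'$ (a short near-minimizing path from $x\in U'$ only guarantees the endpoint stays in $U''$), but since $\bar{U''}$ is still compact and the bi-Lipschitz comparison holds there, the Cauchy-sequence step goes through unchanged. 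In short: same skeleton as the paper, with the paper's one-line appeal to ``$M$ is open'' replaced by a complete localization argument.
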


\begin{proof}
Let $x \in M$, since $M$ is open $\varepsilon=d_M(x, \partial M)> 0$.
Then $x \in M^{\varepsilon/2}$. 
\end{proof}

\begin{lem} \label{ball-in-M-delta}
Let $\delta>\delta'>0$. If $y\in M^\delta$ 
then for any $\varepsilon < \delta-\delta'$ we have
\be
B(y,\vare)=\{x\in M: \,d_M(x, y)<\vare\} \subset
M^{\delta'}.
\ee
\end{lem}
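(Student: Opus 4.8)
The plan is to chase the triangle inequality together with the definition of the $\delta$ inner region. The statement I must prove is that if $y \in M^\delta$ and $\varepsilon < \delta - \delta'$, then every $x$ with $d_M(x,y) < \varepsilon$ lies in $M^{\delta'}$, i.e.\ $d_M(x, \partial M) > \delta'$.

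First I would record what membership in $M^\delta$ gives us: by Definition~\ref{defn-1}, $y \in M^\delta$ means $d_M(y, \partial M) > \delta$. Here $d_M(y,\partial M)$ should be understood as $\inf\{d_M(y,z): z \in \partial M\}$, where $\partial M = \bar M \setminus M$ and $d_M$ extends to the metric completion $\bar M$. Next, take any $x \in B(y,\varepsilon)$, so $d_M(x,y) < \varepsilon$. For an arbitrary boundary point $z \in \partial M$, the triangle inequality in $(\bar M, d_M)$ gives
\be
d_M(y,z) \le d_M(y,x) + d_M(x,z),
\ee
hence $d_M(x,z) \ge d_M(y,z) - d_M(x,y) > \delta - \varepsilon > \delta - (\delta - \delta') = \delta'$. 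Taking the infimum over $z \in \partial M$ yields $d_M(x,\partial M) \ge \delta - \varepsilon > \delta'$, so $x \in M^{\delta'}$. In particular this also shows $x \in M$ (since $d_M(x,\partial M)>0$ and $M$ is open with $\partial M$ its metric-completion boundary), which is needed for the statement $B(y,\varepsilon) \subset M^{\delta'}$ to even typecheck.

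There is essentially no obstacle here; the only mild subtlety worth a sentence is making sure $d_M(y,\partial M) > \delta$ is the correct reading of $y \in M^\delta$ and that the triangle inequality is being applied in the metric completion $(\bar M, d_M)$ rather than in $M$ itself — but since $d_M$ on $M$ extends canonically to $\bar M$ and the infimum defining $d_M(x,\partial M)$ ranges over points of $\bar M$, this is immediate. I would simply present the three-line computation above and conclude.
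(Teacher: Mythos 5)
Your argument is correct and is essentially identical to the paper's proof: both apply the triangle inequality to $d_M(x,z)$ for an arbitrary $z\in\partial M$, using $d_M(y,z)>\delta$ and $d_M(x,y)<\varepsilon<\delta-\delta'$ to conclude $d_M(x,z)>\delta'$. (The remark about $x\in M$ ``typechecking'' is unnecessary, since the ball $B(y,\varepsilon)$ is already defined as a subset of $M$ in the statement.)
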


\begin{proof}
Let $x\in B(y,\vare)$, so $d_M(x, y)<\delta-\delta'$.
Since $y\in M^\delta$, for all $z\in \partial M$
$d_M(y,z)>\delta$.  By the triangle inequality,
\be
d_M(x,z)\ge d_M(y,z)-d_M(x,y)> \delta -(\delta-\delta')=\delta'.
\ee
\end{proof}

Inner regions, $M^\delta$, with restricted metrics, $d_M$,
are not necessarily length spaces.  

\begin{ex}  
In the
flat open manifold
\be
M=\{(x,y): x^2+y^2 \in (1,25)\} \subset \E^2
\ee
the distance between $(3,1)$ and $(-3,1)$ is
\be
d_M\big( (3,1), (-3, 1) \big) = 6
\ee
because they are joined by curves of length
arbitrarily close to $6$.   However for $\delta=1$
we have
\be
M^\delta=\{(x,y): x^2+y^2 \in (4,16)\} \subset \E^2.
\ee
The length of any curve in $M^\delta$ 
between $(3,1)$ and $(-3,1)$ must go around
$(0,2)$ and thus has length at least $2\sqrt{9+1}>6$.
\end{ex}

In fact inner regions of path connected manifolds
need not be connected:

\begin{ex} 
Let our manifold be the connected union of balls
in the Euclidean plane:
\be
M= B_{(4,0)}(5) \cup B_{(-4,0)}(5) \subset \E^2
\ee
Then
\be
\partial M= A_+ \cup A_-
\ee
where
\begin{eqnarray}
A_+ &=& \partial B_{(4,0)}(5) \cap\{(x,y): x\ge 0\}\\
A_- &=& \partial B_{(-4,0)}(5) \cap\{(x,y): -x\ge 0\}
\end{eqnarray}
Note that
\be
(0, 3), (0, -3) \in \partial M.
\ee
Thus for $\delta >3$, 
\be
M^\delta \cap \{(0,y): y \in \R\} = \emptyset.
\ee
However for $\delta<5$, we have
\be
(4,0), (-4,0) \in M^\delta.
\ee
Thus $M^\delta$ is not connected for $\delta\in (3,5)$.
\end{ex}

\section{Manifolds with Gromov-Haudorff limits have Converging Inner Regions}
\label{sect-manifolds}

In this section we will prove:

\begin{thm}\label{GH-lim-gives-delta-lim}
Suppose $M_j$ are precompact open metric spaces, $(X, d_X)$ is a compact metric 
space and
$(\bar{M}_j, d_{M_j}) \GHto (X, d_X)$ then for each $\delta>0$,
there exist a subsequence $\bar{M}_{j_k}^\delta$, a compact
metric space $Y^{\delta}(j_{k}) \subset X$ 
such that
\be \label{GH-lim-gives-delta-lim-1}
\Big(\bar{M}_{j_k}^\delta, d_{M_{j_k}}\Big) \GHto 
\Big(Y^\delta(j_{k}), d_\delta\Big),
\ee 
for any $\delta=\delta_i$ in the sequence
and 
\be
Y^{\delta_1}(j_{k}) \subset Y^{\delta_2}(j_{k})
\ee
if (\ref{GH-lim-gives-delta-lim-1})  hold for $\delta_{1},\delta_{2}$, 
$0 < \delta_2 <\delta_1$.

Let 
\be \label{defn-U-here}
U_{\{\delta_{i}\},\{j_k\}} = \bigcup_{\delta_i} Y^{\delta_i}(j_{k}).
\ee
Then $U_{\{\delta_{i}\},\{j_k\}}$ is an open subset of $X$.

If we have two sequences $\{\delta_i\}, \{\beta_i\}$
such that (\ref{GH-lim-gives-delta-lim-1}) for all 
$\delta \in \{\delta_i\}\cup\{\beta_i\}$
then
\be\label{same-U-diff subseq}
U_{\{\delta_{i}\},\{j_k\}}=U_{\{\beta_{i}\},\{j_k\}}.
\ee
\end{thm}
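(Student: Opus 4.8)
The plan is to prove the statement in the order it is phrased: first extract the nested subsequence and the nested family $Y^{\delta_i}(j_k)$, then verify $U_{\{\delta_i\},\{j_k\}}$ is open, then prove the independence of $U$ from the choice of sequence $\{\delta_i\}$. Throughout I would use Theorem~\ref{common-Z} to realize the convergence $(\bar M_j, d_{M_j}) \GHto (X, d_X)$ inside a single compact metric space $Z$, so that all the $\bar M_j$ and $X$ are honest closed subsets of $Z$ with $d_H^Z(\bar M_j, X) \to 0$. This reduces Gromov--Hausdorff convergence to Hausdorff convergence in $Z$, where I can apply Lemma~\ref{lem-H-balls} and the Blaschke Theorem~\ref{Blaschke}.

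For the first part: fix a sequence $\delta_i \to 0$ (or any countable set of $\delta$'s). For each fixed $\delta = \delta_i$, the sets $\bar M_j^\delta$ are closed subsets of the compact space $Z$, so by Blaschke's theorem some subsequence converges in Hausdorff sense to a closed subset of $Z$; by a diagonal argument over $i$ I get a single subsequence $\{j_k\}$ so that $\bar M_{j_k}^{\delta_i} \Hto Y^{\delta_i}(j_k) \subset X$ for every $i$ (that the limit sits inside $X$ follows since $\bar M_{j_k}^{\delta} \subset \bar M_{j_k} \to X$). The nesting $Y^{\delta_1}(j_k) \subset Y^{\delta_2}(j_k)$ for $\delta_2 < \delta_1$ is immediate from $M^{\delta_1} \subset M^{\delta_2}$ together with the standard fact that Hausdorff limits respect inclusion: if $y \in Y^{\delta_1}(j_k)$, pick $y_{j_k} \in \bar M_{j_k}^{\delta_1} \subset \bar M_{j_k}^{\delta_2}$ with $y_{j_k} \to y$, so $y \in Y^{\delta_2}(j_k)$.

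For openness of $U_{\{\delta_i\},\{j_k\}} = \bigcup_i Y^{\delta_i}(j_k)$: take $y \in Y^{\delta_i}(j_k)$ for some $i$. Choose $\delta_\ell < \delta_i$ from the sequence and let $y_{j_k} \in \bar M_{j_k}^{\delta_i}$ converge to $y$. By Lemma~\ref{ball-in-M-delta}, for $\varepsilon < \delta_i - \delta_\ell$ the ball $B(y_{j_k}, \varepsilon) \cap M_{j_k}$ lies in $M_{j_k}^{\delta_\ell}$; then Lemma~\ref{lem-H-balls} shows the balls $\bar B_{y_{j_k}}(r_{j_k}) \cap \bar M_{j_k}^{\delta_\ell}$ Hausdorff-converge to $\bar B_y(r) \cap Y^{\delta_\ell}(j_k)$ for any fixed $r < \varepsilon$, which forces $\bar B_y(r) \cap X \subset Y^{\delta_\ell}(j_k) \subset U$ — so $U$ contains a neighborhood of each of its points. (Here one must be slightly careful that the relevant radius-$r$ balls in the manifolds really avoid the boundary so that they actually lie in $\bar M_{j_k}^{\delta_\ell}$; this is exactly what Lemma~\ref{ball-in-M-delta} provides.)

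For the independence statement \eqref{same-U-diff subseq}: assume \eqref{GH-lim-gives-delta-lim-1} holds along the \emph{same} subsequence $\{j_k\}$ for every $\delta$ in $\{\delta_i\} \cup \{\beta_i\}$. It suffices to show each $Y^{\beta_i}(j_k) \subset U_{\{\delta_i\},\{j_k\}}$ and symmetrically. Given $\beta = \beta_i$, pick $\delta_\ell < \beta$ from the $\delta$-sequence (possible since $\delta_i \to 0$); then $M_{j_k}^{\beta} \subset M_{j_k}^{\delta_\ell}$, and passing to Hausdorff limits along $\{j_k\}$ gives $Y^{\beta}(j_k) \subset Y^{\delta_\ell}(j_k) \subset U_{\{\delta_i\},\{j_k\}}$. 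By symmetry the reverse inclusion holds, so the two unions coincide. I expect the main obstacle to be the openness argument — specifically making the ball-convergence bookkeeping of Lemma~\ref{lem-H-balls} interact correctly with the restricted metric $d_{M_{j_k}}$ (as opposed to the intrinsic metric on $M_{j_k}^{\delta_\ell}$) and confirming that the limit ball $\bar B_y(r) \cap X$ is genuinely captured inside $Y^{\delta_\ell}(j_k)$ rather than merely inside $X$; the nesting and independence steps are then essentially formal consequences of $M^\delta \subset M^{\delta'}$ for $\delta > \delta'$.
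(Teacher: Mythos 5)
Your proposal is correct and follows essentially the same route as the paper: the paper also uses Theorem~\ref{common-Z} to pass to Hausdorff convergence inside a fixed compact $Z$, invokes Blaschke's Theorem~\ref{Blaschke} with a diagonal argument to get a single subsequence handling all $\delta_i$, and proves openness and independence by exactly the reverse-triangle-inequality/inclusion argument you sketch (the paper factors the Hausdorff-convergence half out as the separate Theorem~\ref{H-lim-gives-delta-lim} and then records Theorem~\ref{GH-lim-gives-delta-lim} as a corollary). The one place you flag as a potential obstacle is real but mild: Lemma~\ref{lem-H-balls} alone does not ``force'' $\bar B_y(r)\cap X\subset Y^{\delta_\ell}(j_k)$ --- the paper instead picks $x_{j_k}\in M_{j_k}$ converging to $x$, bounds $d_Z(x_{j_k},y_{j_k})$ by the triangle inequality, and concludes $x_{j_k}\in M_{j_k}^{\delta_\ell}$ so that $x$ lies in the Hausdorff limit by the sequential characterization of Hausdorff limits --- which is precisely the combination of Lemma~\ref{ball-in-M-delta}, density of $M_{j_k}$ in $\bar M_{j_k}$, and monotonicity of Hausdorff limits that you would need to spell out.
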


Note that $M_j^\delta$ can be an empty space. See Example~\ref{ex-to-empty}.
Consider the Gromov-Hausdorff limit of
an empty metric space, to be an empty metric space.

\begin{rmrk}
In Example~\ref{ex-why-subseq-1} we see that a subsequence
$j_k$ may be necessary to obtain GH convergence of the $\delta$ inner regions and that $U_{\{\delta_{i}\},\{j_k\}}$ depends on the choice of the
subsequence.  In Example~\ref{ex-why-subseq-2} we see that even
the closure of $U_{\{\delta_{i}\},\{j_k\}}$ may depend on the choice
of subsequence $j_k$.  In Example~\ref{ex-why-subseq-3} we see that 
$U_{\{\delta_{i}\},\{j_k\}}$ may be disjoint and not isometric.
\end{rmrk}

\subsection{Hausdorff Convergence of $\delta$ Inner Regions}

We begin with a very basic theorem:

\begin{thm}\label{H-lim-gives-delta-lim}
Let $(Z, d_{Z})$ be a compact metric space. 
Suppose $M_j \subset Z$ are open metric spaces with the induced metric 
and $X \subset Z$ is closed such that $\bar{M}_j \Hto X$ then for each $\delta>0$,
there exist a subsequence $\bar{M}_{j_k}^\delta$ and a compact
set $W^{\delta}(j_k) \subset X$ 
such that
\be \label{H-lim-gives-delta-lim-1}
\bar{M}_{j_k}^\delta \Hto W^{\delta}(j_k),
\ee 
and if (\ref{H-lim-gives-delta-lim-1}) holds for $\delta_{1},\delta_{2}$, 
$0 < \delta_2 <\delta_1$, then
\be
W^{\delta_1}(j_k) \subset W^{\delta_2}(j_k)
\ee

Given a sequence of positive numbers $\delta_{i}\to 0$ there exists a subsequence
$\{j_k\} \subset \N$ such that (\ref{H-lim-gives-delta-lim-1}) holds for all $\delta=\delta_i$. 
Let 
\be
U'_{\{\delta_{i}\},\{j_k\}} = \bigcup_{\delta_i} W^{\delta_i}(j_k)
\ee
$U^{'}_{\{\delta_{i}\},\{j_k\}}$ is an open subset of $X$.
If we have two sequences $\{\delta_i\}, \{\beta_i\}$
such that (\ref{H-lim-gives-delta-lim-1}) for all 
$\delta \in \{\delta_i\}\cup\{\beta_i\}$
then
\be
U'_{\{\delta_{i}\},\{j_k\}}=U_{\{\beta_{i}\},\{j_k\}}.
\ee
\end{thm}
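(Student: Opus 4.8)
The plan is to isolate one elementary fact about Hausdorff limits of nested closed sets and then deduce every assertion from it, together with Blaschke's Theorem~\ref{Blaschke}, a diagonal argument, and Lemma~\ref{ball-in-M-delta}. The fact is: if $A_j, B_j \subset Z$ are closed with $A_j \subset B_j$, $A_j \Hto A$ and $B_j \Hto B$, then $A \subset B$. Indeed, any $a \in A$ is a limit of points $a_j \in A_j \subset B_j$, so $d_Z(a,B) = \lim_j d_Z(a_j,B) \le \lim_j d_H^Z(B_j,B) = 0$, and $B$ is closed.

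With this in hand I would argue as follows. Since $Z$ is compact, hence complete, the metric completion $\bar{M}_j^\delta$ is the closure of $M_j^\delta$ in $Z$, a closed subset of the compact space $Z$; so Theorem~\ref{Blaschke} extracts a subsequence with $\bar{M}_{j_k}^\delta \Hto W^\delta(j_k)$ for a closed, hence compact, $W^\delta(j_k)$, and applying the monotonicity principle to $\bar{M}_{j_k}^\delta \subset \bar{M}_{j_k}$ (whose limit is $X$) gives $W^\delta(j_k) \subset X$; if infinitely many $\bar{M}_j^\delta$ are empty, take that subsequence and set $W^\delta = \emptyset$. For $0 < \delta_2 < \delta_1$ one has $M_j^{\delta_1} \subset M_j^{\delta_2}$ (since $d_{M_j}(x,\partial M_j) > \delta_1$ forces $d_{M_j}(x,\partial M_j) > \delta_2$), so passing to closures and applying the monotonicity principle along the common subsequence yields $W^{\delta_1}(j_k) \subset W^{\delta_2}(j_k)$. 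Given a null sequence $\delta_i \to 0$, a single $\{j_k\}$ handling all $\delta=\delta_i$ comes from the standard diagonalization: extract nested subsequences for $\delta_1,\delta_2,\dots$ and pass to the diagonal.

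The substantive point, which I expect to be the main obstacle, is that $U'_{\{\delta_i\},\{j_k\}} = \bigcup_i W^{\delta_i}(j_k)$ is open in $X$. Fix $p \in W^{\delta_i}(j_k)$ and pick $i'$ with $\delta_{i'} < \delta_i$ (possible since $\delta_i \to 0$); I claim $B_p(\rho) \cap X \subset W^{\delta_{i'}}(j_k)$ with $\rho := (\delta_i-\delta_{i'})/3$. Since $p \in W^{\delta_i}(j_k) = \lim_k \bar{M}_{j_k}^{\delta_i}$ and $M_{j_k}^{\delta_i}$ is dense in $\bar{M}_{j_k}^{\delta_i}$, there are $q_k \in M_{j_k}^{\delta_i}$ with $q_k \to p$. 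Given $x \in X$ with $d_Z(x,p) < \rho$, density of $M_{j_k}$ in $\bar{M}_{j_k}$ and $\bar{M}_{j_k} \Hto X$ give $x_k \in M_{j_k}$ with $x_k \to x$, so $\limsup_k d_Z(x_k,q_k) \le d_Z(x,p) < \rho < \delta_i - \delta_{i'}$; for $k$ large, Lemma~\ref{ball-in-M-delta} applied in $M_{j_k}$ with $y = q_k \in M_{j_k}^{\delta_i}$ and a suitable $\varepsilon$ with $d_Z(x_k,q_k) < \varepsilon < \delta_i - \delta_{i'}$ gives $x_k \in B(q_k,\varepsilon) \subset M_{j_k}^{\delta_{i'}} \subset \bar{M}_{j_k}^{\delta_{i'}}$. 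Hence $d_Z(x,\bar{M}_{j_k}^{\delta_{i'}}) \le d_Z(x,x_k) \to 0$, and comparing with $\bar{M}_{j_k}^{\delta_{i'}} \Hto W^{\delta_{i'}}(j_k)$ forces $d_Z(x,W^{\delta_{i'}}(j_k)) = 0$, so $x \in W^{\delta_{i'}}(j_k)$ since that set is closed. The delicate part is juggling the three scales $\delta_i,\delta_{i'},\rho$ while moving between $M_{j_k}$, its completion, and $X$, with $M_{j_k}$ carrying only the restricted (non-length) metric; Lemma~\ref{ball-in-M-delta} is exactly what makes the ball around $q_k$ land inside $M_{j_k}^{\delta_{i'}}$.

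Finally, for independence of the null sequence: if $\{\delta_i\}$ and $\{\beta_i\}$ both give (\ref{H-lim-gives-delta-lim-1}) along the same $\{j_k\}$, fix $i$ and pick $i'$ with $\beta_{i'} < \delta_i$; then $M_{j_k}^{\delta_i} \subset M_{j_k}^{\beta_{i'}}$, so passing to closures and applying the monotonicity principle gives $W^{\delta_i}(j_k) \subset W^{\beta_{i'}}(j_k) \subset U'_{\{\beta_i\},\{j_k\}}$. Taking the union over $i$ gives $U'_{\{\delta_i\},\{j_k\}} \subset U'_{\{\beta_i\},\{j_k\}}$, and the reverse inclusion is symmetric, hence equality. (The $U$ on the right of the last displayed equation in the statement should read $U'_{\{\beta_i\},\{j_k\}}$.)
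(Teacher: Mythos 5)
Your proposal is correct and follows essentially the same route as the paper's proof: Blaschke compactness plus a diagonal argument to get the converging subsequences, the monotonicity of Hausdorff limits under nested closed sets (which the paper invokes without isolating it as a named lemma) for $W^{\delta_1}(j_k)\subset W^{\delta_2}(j_k)$ and for $W^\delta(j_k)\subset X$, and a three‑point triangle‑inequality argument for openness of $U'$. The only stylistic difference is that the paper carries out the openness estimate inline with the fixed constant $\delta_N/6$ and concludes $d_Z(x_{j_k},\partial M_{j_k})>\delta_N/2$ directly, whereas you invoke Lemma~\ref{ball-in-M-delta} to push $x_k$ into $M_{j_k}^{\delta_{i'}}$; these are the same estimate. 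Your observation that the final displayed equation of the statement has a missing prime on $U_{\{\beta_i\},\{j_k\}}$ is also correct.
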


There are occasions where $M_j^\delta$ can be an empty space.
We consider the Hausdorff limit of an empty metric space,
to be an empty metric space.

Before we prove this theorem we provide an example
demonstrating that even if 
\be
\bar{M}_{j_k}^{\delta_1} \Hto W^{\delta_1}(j_k) \,\,\textrm{ and }\,\, 
\bar{M}_{j_k}^{\delta_2} \Hto W^{\delta_2}(j_k),
\ee
for some $\delta_1>\delta_2 >0$, we cannot assure that for $\delta \in (\delta_2,\delta_1)$, $\bar{M}_{j_k}^\delta$ converges:

\begin{ex}\label{ex-no-diag}
Fix $\varepsilon < 1/3$. In 2 dimensional Euclidean space, $\E^2$, consider the sequence $M_{j}$ where
$M_{2j}$ is a ball of radius $1$ with a spline of width $4\varepsilon$ attached to it as it is depicted in Figure~\ref{graph-spline-1}, $M_{2j+1}$ 
is a ball of radius $1$ with a spline whose width decreases from $6\varepsilon$ to $4\varepsilon$ as $j \to \infty$. Then 
$\bar{M}^{\varepsilon}_{j}$ converges to ball of radius $1-\varepsilon$ with a spline of width $2\varepsilon$, $\bar{M}^{3\varepsilon}_{j}$ converges to a ball of radius $1-3\varepsilon$ with no spline attached. But $\bar{M}^{2\varepsilon}_{2j}$ converges to ball of radius $1-2\varepsilon$ while $\bar{M}^{2\varepsilon}_{2j+1}$ converges to ball of radius $1-2\varepsilon$ with a line segment attached to it. Thus $M^{2\varepsilon}_{j}$ does not converge
in the Hausdorff sense.
\end{ex}

In the proof of Theorem~\ref{H-lim-gives-delta-lim} we will apply the
following fact:

\begin{rmrk}
Recall that if $\{ A_{j} \}$ is a sequence of closed subsets
of a metric space $A$ such that $A_{j} \Hto A_{\infty}$, then 
\be
A_{\infty}=\left\{a\in A :\,\,\forall j \in \N \,\,\exists a_{j} \in A_{j}, \lim_{j \to \infty}a_{j}=a\right\}.
\ee
Any subsequence $\{ A_{j_k} \}$ of $\{ A_{j} \}$ also converges
in Hausdorff sense to $A_\infty$. Then  
\be
A_{\infty}=\left\{a\in A :\,\, \forall k \in \N \,\,\exists a_{j_k} \in A_{j_k}, \lim_{k \to \infty}a_{j_k}=a\right\}.
\ee
\end{rmrk}

We now prove Theorem~\ref{H-lim-gives-delta-lim}: 

\begin{proof}
Apply Theorem~\ref{Blaschke} to the sequence $\{\bar{M}_{j}^\delta\}_{j=1}^{\infty}$ to get a subsequence $\{\bar{M}_{j_k}^\delta\}_{k=1}^{\infty}$
and a compact set $W^{\delta}(j_k)$ such that (\ref{H-lim-gives-delta-lim-1}) is satisfied.
Since $\bar{M}_{j_k}^\delta \subset \bar{M}_{j_k}$ then
$W^{\delta}(j_k) \subset  X$. Similarly, $W^{\delta_1}(j_k) \subset W^{\delta_2}(j_k)$ when 
(\ref{H-lim-gives-delta-lim-1}) holds for $0< \delta_{2} < \delta_{1}$.

Given $\delta_{i}\to 0$. Start with $\delta_{1}$. By Theorem~\ref{Blaschke}
there exists a subsequence $\{j_{k}(\delta_{1})\}_{k=1}^{\infty}$ of $\{j\}_{j=1}^{\infty}$ and a compact
set $W^{\delta_1}(j_{k}(\delta_{1}))$ such that
$\bar{M}_  {{j_{k}(\delta_{1})}}
^{\delta_1} \Hto W^{\delta_1}(j_{k}(\delta_{1}))$.
For $n>1$, there exists a subsequence $\{j_{k}(\delta_{n})\}_{k=1}^{\infty}$ of
$\{j_{k}(\delta_{n-1})\}_{k=1}^{\infty}$
and a compact set $W^{\delta_n}(j_{k}(\delta_{n}))$ such that 
$\bar{M}_{j_{k}(\delta_{n})}^{\delta_n} \Hto W^{\delta_n}(j_{k}(\delta_{n}))$.
Define $j_{k}=j_{k}(\delta_{k})$.  Then
$\{ j_{k} \}_{k=n}^{\infty}$ is a subsequence of $\{j_{k}(\delta_{n})\}_{k=1}^{\infty}$
thus (\ref{H-lim-gives-delta-lim-1}) holds for all $n$.

Let $y$ be an element of $U^{'}_{\{\delta_{i}\},\{j_k\}}$.
Then there exist $N \in \N$ such that $y \in W^{\delta_{i}}(j_{k})$
for $i \geq N$. Suppose that $x \in X$ and $d_{Z}(x,y)< \delta_{N}/6$.
Since $y \in W^{\delta_{i}}(j_{k})$ choose $y_{j_{k}} \in \bar{M}^{\delta_N}_{j_{k}}$
such that $y=\lim_{j \to \infty} y_{j_{k}}$ and $d_{Z}(y,y_{j_{k}}) <  \delta_{N}/6$.
Analogously, take $x_{j} \in \bar{M}_{j}$ such that $x=\lim_{j \to \infty} x_{j}$
and $d_{Z}(x,x_{j}) <  \delta_{N}/6$. Then
\be
d_{Z}(x_{j_k},y_{j_{k}}) < d_{Z}(x_{j_k},x) +d_{Z}(x,y)+d_{Z}(y,y_{j_{k}})< \delta_{N}/2 .
\ee
This implies that $d_{Z}(x_{j_k}, \partial(M_{j_{k}})) > \delta_{N}/2$.
Then $x \in W^{\delta_i}(j_{k}) \subset U^{'}_{\{\delta_{i}\},\{j_k\}}$ for some $i > N$.

If there is another sequence $\beta_{i} \to 0$ such that (\ref{H-lim-gives-delta-lim-1}) holds for all $\delta=\beta_i$, for each $i$ find $l$
such that $\delta_{l} < \beta_i$ then
$W^{\beta_i} (j_{k})\subset W^{\delta_l}(j_{k})$. 
This proves that $U^{'}_{\{\beta_{i}\},\{j_k\}} \subset U^{'}_{\{\delta_{i}\},\{j_k\}}$. The same reasoning works to prove $$U^{'}_{\{\delta_{i}\},\{j_k\}} \subset U^{'}_{\{\beta_{i}\},\{j_k\}}$$
\end{proof}

\begin{defn}\label{defn-H-U}
With the hypothesis of Theorem~\ref{H-lim-gives-delta-lim}
define
\be
U^{'}_{\{j_k\}}= \bigcup W^{\delta}(j_{k})
\ee
where the union is taken over all $\delta$ for which
$\bar{M}^{\delta}_{j_k}$ is a sequence that converges in  
Hausdorff sense to a metric space, $W^{\delta}(j_{k})$, and
\be
U^{'}= \bigcup_{\delta>0} W^{\delta}
\ee
where $W^{\delta}$ is the Hausdorff limit space
of some convergent subsequence of $\bar{M}^{\delta}_{j}$.
\end{defn}

\subsection{Finding Limits of Inner Regions in the Gromov-Haudorff limits}

In this subsection we prove Theorem~\ref{GH-lim-gives-delta-lim}:

\begin{proof}
By Theorem~\ref{common-Z} there exists a common metric space $Z$ and isommetric embeddings $\varphi_{j}:(\bar{M}_j, d_{M_j}) \to (Z,d_{Z})$, $\varphi:(X, d_{X}) \to (Z,d_{Z})$ such that $d_{H}^{Z}(\varphi_{j}(\bar{M}_j),\varphi(X)) \to 0$. 
Now we can apply Theorem~\ref{H-lim-gives-delta-lim}.
For each $\delta>0$,
there exist a subsequence $\varphi_{j_k}(\bar{M}_{j_k}^\delta)$ and a compact
set $W^{\delta}(j_k) \subset \varphi(X)$ such that $\varphi_{j_k}(\bar{M}_{j_k}^\delta) \Hto W^{\delta}(j_k)$. Let $Y^{\delta}(j_k)=\varphi^{-1}(W^{\delta}(j_k))$.
Clearly, (\ref{GH-lim-gives-delta-lim-1}) holds and $Y^{\delta_1}(j_k) \subset Y^{\delta_2}(j_k)$ when (\ref{GH-lim-gives-delta-lim-1}) hold for $0 < \delta_2 <\delta_1$.
Given a sequence of positive numbers $\delta_{i}\to 0$ there exists a subsequence
$\{ j_k \} \subset \N$ 
such that 
$\varphi_{j_k}$
$(\bar{M}^{\delta_{i}}_{j_k})$
$\Hto W^{\delta_{i}}(j_k)$ for all $i$, then (\ref{GH-lim-gives-delta-lim-1}) hold for all $i$ and $U_{\{\delta_{i}\},\{j_k\}} =\varphi^{-1}(U^{'}_{\{\delta_{i}\},\{j_k\}})$ is an open subset of $X$ that does not depend on the 
sequence $\delta_{i}$.
\end{proof}

\subsection{Unions of the Limits of Inner Regions in the Gromov-Haudorff limits}

The following notion of a limit's interior union has some interesting
properties:

\begin{defn}\label{defn-GH-U}
We define a "limit's inner union" of a sequence of open Riemannian
manifolds satisfying the hypothesis of Theorem~\ref{GH-lim-gives-delta-lim}
to be
\be \label{defn-U-here-2}
U=U_{\{j_k\}} = \bigcup_{\delta\in D_{\{j_k\}} }Y^{\delta}(j_{k})
\ee
where the union is taken over all $\delta>0$ such that
$M_{j_k}^\delta$ have Gromov-Hausdorff limits as in
Theorem~\ref{GH-lim-gives-delta-lim}.   Observe that for any
sequence $\delta_i \in D_{\{j_k\}}$, by (\ref{same-U-diff subseq})
we have
\be
U=U_{\{j_k\}} = U_{\{\delta_{i}\},\{j_k\}}.
\ee
\end{defn}

In Theorem~\ref{thm-uniquenessGluedLimit}, we will prove that the limit's
inner union, $U$, defined in Definition~\ref{defn-GH-U} is a 
special case of the glued
limits we will construct in Theorem~\ref{def-glue}.   Since it is
easy to understand the properties of these $U$, we present a
few examples of them here so that we may refer to them later
as examples of glued limit spaces.

\begin{ex}\label{ex-to-empty}
Let $M_j$ be a Euclidean disk of radius $1/j$.  Then 
$\bar{M}_j \GHto X$ where $X$ is a single point. For
any $\delta>0$, taking $j> 1/\delta$, we see that
$M_j^\delta$ are empty spaces. Thus $U$ is the empty set.  
\end{ex}

In the following example we see that $U_{\{j_k\}}$
depends on the subsequence $\{j_k\}$ and in Example~\ref{ex-one-spline},
we see that $X$ is not necessarily contained in the closure of $U$.

\begin{ex}\label{ex-why-subseq-1}
Let $M_{2j}$ be the standard Euclidean disk of radius 1 and let
$M_{2j+1}$ be the standard Euclidean disk with the center point
removed.   Then $\bar{M}_j$ is a closed Euclidean disk as is the
limit space $X$.   
Given $\delta\in (0,1)$, $M^\delta_{2j}$ is the standard Euclidean
disk of radius $1-\delta$.  Their metric completions converge to
the closed disk of radius $1-\delta$.  $U_{\{2j\}}$ is the open 
Euclidean disk of radius $1$.
However $M^\delta_{2j+1}$ is
a Euclidean annulus, $Ann_0(\delta, 1-\delta)$, and the metric
completions converge to the closure of this annulus.   
$U_{\{2j+1\}}$ is the open Euclidean disk of radius $1$
with the center point removed. 
In this example $U=U_{\{2j\}}$.
\end{ex}

\begin{ex} \label{ex-one-spline}
In 2 dimensional Euclidean space consider the sequence of a ball with 
a spline attached to it as it is depicted in Figure~\ref{graph-spline-1}. The Gromov-Hausdorff
limit of the sequence is a ball with an interval attached while the closure 
of U is just the closed ball.   
\end{ex}

In Example~\ref{ex-why-subseq-1}, we saw $U_{\{2j\}} \neq U_{\{2j+1\}}$
still their closures are the same. This is not always the case. $U_{\{j_k\}}$
could even be an empty set.

\begin{ex}\label{ex-why-subseq-2}
For $j \in \N$, let $M_{2j}$ be a flat torus so it has no boundary and
$M_{2j+1}$ be flat tori with increasingly dense small holes cut out
but the holes get smaller and smaller so that they still converge to the
flat torus $X$. $U_{\{2j\}}=X$ but for any $\delta>0$, $M_{2j+1}^\delta$
becomes an empty set. So $U_{\{2j+1\}}$ is the empty set.
\end{ex}

\begin{ex}\label{ex-why-subseq-3}
For $j \in \N$, let $M_{2j}$ be a flat torus $S^1\times S^1$,
with increasingly many dense small holes in $W\times S^1$,
where $W=(0,\pi/4) \subset S^1$ and 
let
$M_{2j+1}$ be a flat torus $S^1\times S^1$,
with increasingly many dense small holes in 
$(S^1 \setminus W) \times S^1$.  Then 
\be
U_{\{2j\}}=(S^1\setminus W)\times S^1\,\,\,
\textrm{ and }\,\,\,
U_{\{2j+1\}}=W \times S^1
\ee
with the restricted distance from $S^1\times S^1$
which are disjoint and not isometric to each other.
\end{ex}

\begin{ex}\label{ex-F}
It is possible for a sequence of open Riemannian manifolds, $M_j$,
to have $\delta$ inner regions $M_j^\delta$ which converge in the
Gromov-Hausdorff sense to some $Y^\delta$ for all $\delta>0$
and yet the limit has two distinct inner unions $U_{\{2j\}} \neq U_{\{2j+1\}}$.
This can be seen for example with the following F-shaped regions:
\be
M_j = (0, 1/j)\times(-1,0] \,\,\,  \cup 
\,\,\,(0,1)\times (0,3) \,\,\, \cup
\,\,\,[1,3)\times (0,1) \,\,\, \cup
\,\,\,[1,3)\times (2,3) \,\,\, \setminus A_j
\ee
in the Euclidean plane
where $A_{2j}$ is an increasingly dense increasingly tiny
collection of balls in $(1,3)\times (0,1)$ and 
$A_{2j+1}$ is an increasingly dense increasingly tiny
collection of balls in $(1,3)\times (2,3)$.    Then
\be
M_j \GHto X = \,\,\,(0,1)\times (0,3) \,\,\, \cup
\,\,\,[1,3)\times (0,1) \,\,\, \cup
\,\,\,[1,3)\times (2,3).
\ee
For $\delta>0$ fixed taking $j$ large enough that $1/j<2\delta$
we see that 
\be
U_{2j}^\delta \GHto Y^\delta = (\delta,1-\delta)\times (\delta,3-\delta) \,\,\, \cup
\,\,\,[1-\delta,3-\delta)\times (2+\delta,3-\delta) 
\ee
which is isometric to 
\be
U_{2j+1}^\delta \GHto Y^\delta = (\delta,1-\delta)\times (\delta,3-\delta) \,\,\, \cup
\,\,\,[1-\delta,3-\delta)\times (\delta,1-\delta). 
\ee
Thus $M_j^\delta$ have a GH limit without taking a subsequence.
On the other hand the limit inner regions are not equal:
\be
U_{\{2j\}}= (0,1)\times (0,3) \,\,\, \cup
\,\,\,[1,3)\times (2,3) \subset X 
\ee
and
\be
U_{\{2j+1\}} = (0,1)\times (0,3) \,\,\, \cup
\,\,\,[1,3)\times (0,1)\subset X 
\ee
only isometric.   We will prove in Theorem~\ref{thm-uniquenessGluedLimit}
that when $M_j^\delta$ have GH limits for all $\delta$ then the closures of the limit's inner unions are always isometric.
\end{ex}

\section{Converging Inner Regions of Sequences with Curvature Bounds}
\label{sect-main}

In this section we prove $\delta$ inner regions converge
under certain geometric hypothesis on the manifolds even
when the manifolds themselves have no Gromov-Hausdorff limits.

\subsection{Constant Sectional Curvature}

Here we prove that the inner regions of a sequence of
manifolds in the following class have a subsequence which
converges in the Gromov-Hausdorff sense.

\begin{defn}\label{def-sc-const}
Given $m\in \N$, $H\in \R$, $V>0$, and $l>0$ we
define 
$
\mathcal{M}^{m, V,l}_{H}  
$
to be the class of 
connected open Riemannian manifolds, $M$, of dimension $\le m$,
with constant sectional curvature $Sect_M=H$, $\vol(M)\le V$, and
\be
L_{min}(M)=\inf\{ L_g(C): \,\, C \textrm{ is a closed geodesic in M } \} > l .
\ee
where a closed geodesic is any geodesic which starts and ends at the same point.
\end{defn}

Recall that complete simply connected manifolds with constant sectional curvature $H\le 0$ have no closed geodesics by the Hadamard
Theorem while those with $H>0$ have $L(M)= 2\pi/\sqrt{H}$ (c.f. \cite{do-Carmo}).    Here we are requiring that the closed geodesic lies in an
open manifold $M$ and we do not have completeness.

\begin{thm}\label{sc-const}
Given any $\delta>0$ if  $(M_j, g_j) \subset \mathcal{M}^{m, V,l}_{H}$, then
there is a subsequence $(M^{\delta}_{j_k}, d_{M_{j_k}})$ such that the metric completion
with the restricted metric
converges in the Gromov-Hausdorff sense to a metric space
$(Y^\delta,d)$.  
In particular the extrinsic diameters measured
using the restricted metric are bounded
uniformly
\be
\diam(M^\delta_{j_k}, d_{M_{j_k}})  \le \epsilon_{0} \frac{V}{V^{m}_{H}(\epsilon_0)}
\ee
\be
\diam(Y^\delta,d)\le \epsilon_{0} \frac{V}{V^{m}_{H}(\epsilon_0)}.
\ee
where 
\begin{eqnarray}
\epsilon_{0}&=&min \{\delta,l/2,\pi/\sqrt{H}\}/2 \textrm{ if }H > 0\\
\epsilon_{0}&=&min \{\delta,l/2\}/2 \textrm{ otherwise},
\end{eqnarray}
and $V^{m}_{H}(\epsilon_{0})$
is the volume of a ball of radius $\epsilon_{0}$ in the complete simply connected
space with constant sectional curvature $H$. 
\end{thm}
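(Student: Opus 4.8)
The plan is to reduce the theorem to Gromov's Compactness Theorem (Theorem~\ref{GH-compactness-1}), which by Proposition~\ref{disjoint-to-cover} and Proposition~\ref{unif-lower} requires only two ingredients: a uniform upper bound on the (extrinsic) diameter of $\bar{M}^\delta_{j}$ in the metric $d_{M_j}$, and a uniform lower bound of the form $\vol(B_p(\epsilon_0)) \ge V^m_H(\epsilon_0)$ for every $p\in M_j^\delta$, where balls are taken with respect to the ambient metric $d_{M_j}$. Granting these, the class $\{(\bar{M}^\delta_{j}, d_{M_j})\}$ lies in a fixed $\mathcal{M}^{D,N}$ with $D = \epsilon_0 V / V^m_H(\epsilon_0)$ and $N(\epsilon) \le (V/V^m_H(\epsilon/2))$-type bound, hence is precompact in the Gromov-Hausdorff distance, giving the convergent subsequence and the stated diameter bounds.

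\emph{Lower volume bound for balls in the inner region.} Fix $p\in M_j^\delta$, so $d_{M_j}(p,\partial M_j) > \delta$. With $\epsilon_0 = \tfrac12\min\{\delta, l/2, \pi/\sqrt{H}\}$ (or without the last term if $H\le 0$), the closed ball $\bar B_p(\epsilon_0)$ in $M_j$ does not reach the boundary. I claim the exponential map $\exp_p\colon B_0(\epsilon_0)\subset T_pM_j \to M_j$ is a diffeomorphism onto its image and is volume-nondecreasing in the sense that $\vol(B_p(\epsilon_0)) \ge V^m_H(\epsilon_0)$. Since $M_j$ has constant curvature $H$ and $\epsilon_0 < \pi/\sqrt H$ when $H>0$, there are no conjugate points along geodesics of length $<2\epsilon_0$, so $\exp_p$ is a local diffeomorphism on $B_0(\epsilon_0)$ and the metric pulled back is exactly the model space form metric, giving \emph{equality} of the pointwise volume density with the model; the only issue is injectivity. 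If $\exp_p$ fails to be injective on $B_0(\epsilon_0)$, two distinct geodesics from $p$ of length $<\epsilon_0$ meet, producing a geodesic loop at $p$ of length $<2\epsilon_0 \le l/2 < l$; more care is needed since a geodesic ``loop'' need not be a closed (smooth) geodesic, but one can run the standard Klingenberg-type argument: either the injectivity radius at $p$ is $\ge \epsilon_0$, or there is a closed geodesic through $p$ of length $\le 2\,\injrad_p < 2\epsilon_0 \le l$, contradicting $L_{min}(M_j) > l$. Either way $\exp_p$ is injective on $B_0(\epsilon_0)$ and $\vol(B_p(\epsilon_0)) = V^m_H(\epsilon_0)$ (equality, not merely $\ge$; the inequality suffices for us).

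\emph{Diameter bound.} Let $p,q\in M_j^\delta$ be arbitrary and consider a curve $C$ in $M_j$ from $p$ to $q$ of length close to $d_{M_j}(p,q)$; actually I argue directly on $\bar M^\delta_j$. Let $\{x_1,\dots,x_N\}\subset M_j^\delta$ be a maximal $\epsilon_0$-separated set in $(\bar M^\delta_j, d_{M_j})$. The balls $B_{x_i}(\epsilon_0/2)$ are pairwise disjoint and, by the previous paragraph applied with radius $\epsilon_0/2 < \epsilon_0$, each has volume $\ge V^m_H(\epsilon_0/2)$; hence $N \le V / V^m_H(\epsilon_0/2)$. But I want the sharper count with radius $\epsilon_0$: take instead a maximal $2\epsilon_0$-separated set, giving $N \le V/V^m_H(\epsilon_0)$ points that are $2\epsilon_0$-dense, so any point of $\bar M^\delta_j$ is within $2\epsilon_0$ of some $x_i$, and since $\bar M^\delta_j$ with $d_{M_j}$ need not be a length space I connect the $x_i$ through the ambient $M_j$: consecutive $x_i$'s along a near-geodesic chain are within $2\epsilon_0$, and there are at most $N$ of them, yielding $\diam(\bar M^\delta_j, d_{M_j}) \le 2\epsilon_0 N \le 2\epsilon_0 V / V^m_H(\epsilon_0)$. (The constant in the statement is $\epsilon_0 V/V^m_H(\epsilon_0)$; matching it exactly just requires being a little more careful with the $2\epsilon_0$ vs $\epsilon_0$ bookkeeping, e.g. using $\epsilon_0$-separated sets and the triangle inequality along the chain.) The limit diameter bound then passes to $(Y^\delta,d)$ by lower semicontinuity of diameter under Gromov-Hausdorff convergence.

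\emph{Main obstacle.} The delicate point is the injectivity-radius/closed-geodesic argument in the lower volume bound: converting ``two geodesics from $p$ of length $<\epsilon_0$ meet'' into ``there is a closed geodesic of length $<l$ in the open manifold $M_j$'' requires the Klingenberg lemma in the non-complete setting, using that $\bar B_p(\epsilon_0)$ stays away from $\partial M_j$ so the relevant geodesics and the limiting closed geodesic remain inside $M_j$. Once that is in hand, everything else is a routine packing argument plus an invocation of Theorem~\ref{GH-compactness-1}.
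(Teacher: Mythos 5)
Your proposal follows essentially the same route as the paper's own proof: establish that for $p\in M^\delta_j$ the exponential map gives a Riemannian isometry from $B_p(\epsilon_0)$ onto a ball in the model space $M^m_H$ (the paper phrases the injectivity-radius step as ``proceeding in a similar way to Klingenberg'' and uses a cut point $q$ to produce a short geodesic loop, exactly the argument you sketch), deduce the uniform lower bound $\vol(B_p(\epsilon_0))\ge V^m_H(\epsilon_0)$, and then feed this into Propositions~\ref{unif-lower} and~\ref{disjoint-to-cover} and Gromov's Compactness Theorem~\ref{GH-compactness-1}. The constant mismatch you flag is genuine but is already present in the paper: its own proof derives $\diam\le \epsilon_0\,V/V^m_H(\epsilon_0/2)$, not the $\epsilon_0\,V/V^m_H(\epsilon_0)$ appearing in the statement.

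One step you should not wave away, and which the paper also leaves unjustified (``From this follows that\dots''), is the passage from a covering-number bound for $(\bar M^\delta_j, d_{M_j})$ to an \emph{extrinsic} diameter bound. The balls $B_{x_i}(\epsilon_0)$ with $x_i\in M^\delta_j$ cover $M^\delta_j$, but a near-minimizing curve in $M_j$ between two points of $M^\delta_j$ can dive toward $\partial M_j$, where we have no volume lower bound for balls along the curve, so the ``near-geodesic chain'' you describe is not guaranteed to pass through the covering balls; and $(\bar M^\delta_j, d_{M_j})$ is not a length space, so the usual chaining-through-a-cover argument does not apply directly either. If you want a defensible write-up of this step, you will need an additional argument (or an additional hypothesis) showing that points of $M^\delta_j$ can be joined by curves in $M_j$ of controlled length that stay within a bounded distance of $M^\delta_j$; as stated, both your proof and the paper's leave a genuine gap here.
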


\begin{rmrk}There are no closed geodesics in the $M_j$
of Examples~\ref{gold-foils} and~\ref{many-splines}, so $L(M_j)=\infty$.
These examples have $H=0$ and $m=2$.  
Since Example~\ref{many-splines} also has a uniform upper bound on
volume,
it demonstrates why we can only obtain Gromov-Hausdorff convergence
of the $M_j^\delta$ instead of the $M_j$ themselves.   The $M_j^\delta$
of Example~\ref{gold-foils} do not have Gromov-Hausdorff converging
subsequences (see Remark~\ref{rmrk-gold-foils}) demonstrating the
necessity of the hypothesis requiring an upper volume bound. 
\end{rmrk}

\begin{proof}
Let $M \in \mathcal{M}^{m, V, l}_{H}$ and
$p\in M^\delta$. Recall that $\epsilon_{0}=min \{\delta,l/2,\pi/\sqrt{H}\}/2$ if $H >0$,
$\epsilon_{0}=min \{\delta,l/2\}/2$ otherwise.
Then for $0<\epsilon<\epsilon_{0}$,
$B_p(\epsilon)$ does not reach the boundary of $M$ and
does not contain any conjugate point to $p$ since
one does not reach a conjugate point before one
would in the comparison space. 

We claim that there are also no cut points to $p$
in $B_p(\epsilon)$.   If there was a cut point, $q$, then
proceeding as in a similar way to Klingenberg~\cite{Klingenberg},
there exists a closed geodesic starting at $p$ 
of length $\le 2d(p,q)<2\epsilon_0$.  By hypothesis, the length of this
closed geodesic is greater than $l$ which is a contradiction.

Thus there is a Riemannian isometric diffeomorphism
\be\label{is-a-const-ball}
\psi: B_p(\epsilon_0) \to B_x(\epsilon_0)\subset M_H^m
\ee
where $M_H^m$ is the simply connected space of constant
sectional curvature, $H$.   In particular
$\vol(B_p(\epsilon))$
is greater or equal than the volume of a ball of the same radius in
a simply connected space form of constant curvature $H$.
By combining Proposition~\ref{unif-lower}
with Proposition~\ref{disjoint-to-cover} and then Gromov's Compactness
Theorem there is a subsequence $(M^{\delta}_{j_k}, d_{M_{j_k}})$ such that the metric completion
with the restricted metric converges in the Gromov-Hausdorff sense to a metric space
$(Y^\delta,d)$.  
Notice that by Proposition~\ref{unif-lower}, the maximum number of disjoint balls of
radius $\epsilon_0/2$ that lie in $M$ is $\le \frac{V}{V^{m}_{H}}(\epsilon_0/2)$. 
Thus, by Proposition~\ref{disjoint-to-cover},
the minimum number of balls of radius $\epsilon_0$ to cover $M$ is $\le \frac{V}{V^{m}_{H}}(\epsilon_0/2)$.
From this follows that 
\be
\diam(M^\delta_{j_k}, {d_{M_{j_k}}})  \le \epsilon_{0} \frac{V}{V^{m}_{H}(\epsilon_0/2)}.
\ee
Since 
\be 
\diam(M^\delta_{j_k}, {d_{M_{j_k}}}) \to \diam(Y^\delta, d),
\ee
we conclude that 
\be
\diam(Y^\delta,d)\le \epsilon_{0} \frac{V}{V^{m}_{H}(\epsilon_0/2)}.
\ee
\end{proof}

\begin{rmrk}\label{rmrk-sc-const}
If the injectivity radius for each $p \in M^{\delta}_j$ is bounded above
by a positive constant then the condition in the lenght of closed geodesics
in Theorem~\ref{sc-const} is satisfied.  
\end{rmrk}

\subsection{Examples with Constant Sectional Curvature}

The volume condition in Theorem~\ref{sc-const} may not
be replaced by a condition on diameter:

\begin{rmrk} \label{rmrk-gold-foils}
Let $(M_j, g_j)$ be the $j^{th}$ covering space of 
$Ann_{(0,0)}(1/j, 1)\subset \E^2$.   

Since every point in $M_j$ is less than a distance
$1$ from the inner boundary, and the inner boundary
has length $j 2\pi (1/j)=2\pi$, we know
\be
\diam(M_j, d_{M_j}) \le 2\pi+2.
\ee
Yet the number of disjoint balls of radius $\delta<1/4$
centered on the cover of $\partial B_{(0,0)}(1/2)$
is greater than $2j$.  So there is no subsequence of
$M^\delta_j$ which converges in the Gromov-Hausdorff sense.

This sequence fails to satisfy the volume condition of
Theorem~\ref{sc-const}:
\be
\vol(M_j)= j (\pi 1^2- \pi/j^2)= \pi (j^2-1)/j.
\ee
It is worth observing that the intrinsic diameters
\be
\diam(M^\delta_j, {M^\delta_j}) \ge j 2 \pi(\delta+1/j)
\ee
also diverges to infinity.   
\end{rmrk}

\begin{rmrk}\label{rmrk-many-splines-delta}
The flat manifolds of Example~\ref{many-splines}
described explicitly in Example~\ref{ex-many-splines} satisfies the hypothesis
of Theorem~\ref{sc-const}.  See Figure~\ref{fig-many-splines-2}.
In fact
for fixed $\delta>0$, once $(2\pi/j) 4 < \delta$, every
point with $r\ge 2$ lies within a distance $\delta$ from the boundary
because the spline is less than $\delta$ wide.  So all the $M^\delta_j$
eventually lie within $r<2$, where the metric is just the standard
Euclidean metric and there is a uniform bound on the number of 
disjoint balls.  So the Gromov-Hausdorff limit also lies within the
Euclidean ball of radius $2$.  On the other hand, every point within
the ball of radius $1+\delta < r < 2-\delta$, lies in $M_j^\delta$, so the Gromov-Hausdorff
limit $Y^\delta$ contains $Ann_{(0,0)}(1+\delta, 2-\delta)$.  In fact
$Y^\delta$ is the metric completion of this annulus with the flat 
Euclidean metric.
\end{rmrk}

\begin{figure}[htbp] 
   \centering
   \includegraphics[width=3.5in]{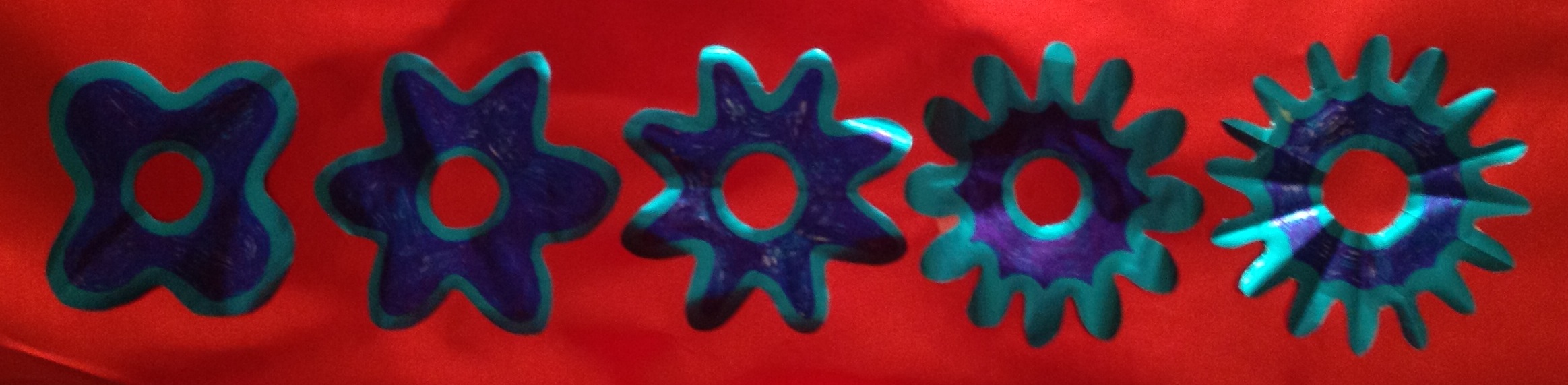} 
 \caption{Models of Example~\ref{many-splines}: $M_4^\delta$, $M_6^\delta$, $M_8^\delta$, $M_{12}^\delta$, $M_{16}^\delta$...}
   \label{fig-many-splines-2}
\end{figure}

\subsection{Manifolds with Nonnegative Ricci Curvature}

Here we prove Theorem~\ref{main-thm} by applying
Gromov's Compactness Theorem (c.f. Theorem~\ref{GH-compactness-1})
combined with the following proposition:


\begin{prop}\label{chain-counting}
If $(M, g_M)$ is a compact Riemannian manifold with boundary
that has nonnegative Ricci curvature, then for any
$\delta>0$, and any $\epsilon \in (0,\delta/2)$, 
the $\delta$ inner region, $M^\delta$, contains a
finite collection of points $\{p_1, p_2,... p_N\}$, such that
\be
M^\delta \subset \bigcup_{i=1}^N B_{p_i}(\epsilon) 
\ee
where
\be
N \le N(\delta, \epsilon, D_\delta, V, \theta)
= \frac{V}{\theta} \,
\left(\frac{2^{2 D_\delta/\epsilon}}{\epsilon}\right)^m
\ee
where $m=\dim(M)$, $\vol(M) \le V$,
\be
\diam(M^\delta,{d_{M^\delta}})\le D_\delta,
\ee
and
\be \label{theta}
\sup \{ \vol(B_q(\delta)): \,q\in M^\delta\} \ge \theta \delta^m .
\ee
\end{prop}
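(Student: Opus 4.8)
The plan is to bound $N$ by controlling the number of balls in a maximal disjoint family inside $M^\delta$, using the intrinsic diameter bound to relate these balls to a single "anchor" ball $B_q(\delta)$ whose volume is large by hypothesis. First I would fix $\epsilon \in (0,\delta/2)$ and take a maximal collection of pairwise disjoint balls $\{B_{p_i}(\epsilon/2)\}_{i=1}^N$ with centers $p_i \in M^\delta$. By Proposition~\ref{disjoint-to-cover}, these centers give a covering $M^\delta \subset \bigcup_{i=1}^N B_{p_i}(\epsilon)$, so it suffices to bound $N$. Note that since $p_i \in M^\delta$ and $\epsilon/2 < \delta/2$, Lemma~\ref{ball-in-M-delta} ensures $B_{p_i}(\epsilon/2)$ does not reach $\partial M$, so Bishop-Gromov applies to balls of radius up to $\epsilon$ (indeed up to $\delta$) centered at any point of $M^\delta$.

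The key step is a volume comparison chaining argument. Let $q \in M^\delta$ be a point with $\vol(B_q(\delta)) \ge \theta\delta^m$ (or within a factor of the supremum in (\ref{theta}); since the supremum is approached, pick $q$ with $\vol(B_q(\delta)) \ge \tfrac{1}{2}\theta\delta^m$, adjusting constants, or simply assume the sup is attained in the compact setting). For each $i$, since $p_i$ and $q$ lie in $M^\delta$ which has intrinsic diameter $\le D_\delta$, there is a path in $M^\delta$ from $q$ to $p_i$ of length $< D_\delta + \epsilon$; walking along it in steps of size $\epsilon/2$ I get a chain $q = x_0, x_1, \dots, x_k = p_i$ with $k \le 2(D_\delta+\epsilon)/\epsilon \le 2D_\delta/\epsilon + 2$ (one can absorb the $+2$ by slight constant inflation, or bound $k \le 4D_\delta/\epsilon$ for $\epsilon < D_\delta$) and consecutive points at distance $< \epsilon/2$, all lying in $M^\delta$. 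Then $B_{x_{j}}(\epsilon/2) \subset B_{x_{j+1}}(\epsilon) \subset B_{x_{j}}(2\epsilon) \subset \cdots$, and since all these balls avoid $\partial M$, iterating Bishop-Gromov along the chain gives
\be
\vol(B_{p_i}(\epsilon/2)) \ge 2^{-mk}\,\vol(B_q(2^{k}\cdot \epsilon/2)).
\ee
Choosing the chain length so that $2^k (\epsilon/2) \ge \delta$ — which holds once $k \ge \log_2(2\delta/\epsilon)$; if the geometric chain above is shorter, simply extend it by staying near $p_i$, or note $B_q$ of a larger radius only helps — one concludes $\vol(B_{p_i}(\epsilon/2)) \ge 2^{-mk}\vol(B_q(\delta)) \ge 2^{-mk}\theta\delta^m$. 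With $k \le 2D_\delta/\epsilon$ (after cleaning up constants), this yields a uniform lower bound $\vol(B_{p_i}(\epsilon/2)) \ge \theta\delta^m 2^{-2mD_\delta/\epsilon}$.

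Finally, summing over the disjoint family and using $\vol(M) \le V$:
\be
V \ge \vol(M) \ge \sum_{i=1}^N \vol(B_{p_i}(\epsilon/2)) \ge N\,\theta\,\delta^m\, 2^{-2mD_\delta/\epsilon},
\ee
so $N \le \dfrac{V}{\theta \delta^m}\, 2^{2mD_\delta/\epsilon}$, which (after folding $\delta^m$ and rearranging the exponentials into the stated form, bounding $\delta^m \ge$ or adjusting to match $(2^{2D_\delta/\epsilon}/\epsilon)^m$) gives the claimed estimate. I expect the main obstacle to be the bookkeeping in the chaining step: making sure every intermediate ball $B_{x_j}(2^j \epsilon/2)$ genuinely avoids the boundary (this needs $x_j \in M^\delta$ together with $2^j\epsilon/2 \le \delta$, which constrains how far the Bishop-Gromov doubling can be pushed before the radius exceeds $\delta$), and combining a "long" chain to travel the distance $D_\delta$ with enough extra doubling steps to inflate the radius up to $\delta$ without ever leaving the region where comparison is valid. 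Getting the exponent to come out exactly as $2^{2D_\delta/\epsilon}$ rather than merely $C^{D_\delta/\epsilon}$ is a matter of choosing step size $\epsilon/2$ and radius-doubling carefully; the precise constants in the statement should be reverse-engineered from this computation.
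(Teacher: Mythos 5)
Your overall architecture matches the paper's: take a maximal pairwise-disjoint family of $\epsilon/2$-balls centered in $M^\delta$ (so the doubled balls cover by Proposition~\ref{disjoint-to-cover}), anchor at a point $q$ with $\vol(B_q(\delta))\ge\theta\delta^m$, chain Bishop--Gromov along a path in $M^\delta$ of length $\le D_\delta$ from $p_i$ to $q$, and divide $V$ by the resulting uniform lower volume bound. However, the central chaining step as you wrote it does not work, and you have in fact flagged the failure yourself without fixing it.

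The formula
\begin{equation*}
\vol(B_{p_i}(\epsilon/2)) \ge 2^{-mk}\,\vol\!\left(B_q\!\left(2^{k}\cdot\tfrac{\epsilon}{2}\right)\right)
\end{equation*}
is not what the standard chaining produces, and as you noticed, it is incompatible with the boundary constraint: once $2^{j}\epsilon/2 > \delta$ the ball $B_{x_j}(2^j\epsilon/2)$ may meet $\partial M$ and Bishop--Gromov no longer applies, yet the whole point of growing the radius was to reach $\delta$. In the correct chaining the radius never grows along the chain. With $d(x_j,x_{j+1})<\epsilon$ and $2\epsilon<\delta$, the ball $B_{x_j}(2\epsilon)$ avoids the boundary, so Bishop--Gromov at $x_j$ gives $\vol(B_{x_j}(\epsilon))\ge 2^{-m}\vol(B_{x_j}(2\epsilon))\ge 2^{-m}\vol(B_{x_{j+1}}(\epsilon))$, and iterating yields $\vol(B_{p_i}(\epsilon))\ge 2^{-mn}\vol(B_q(\epsilon))$ with $n\le D_\delta/\epsilon$ up to an integer. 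The passage from $B_q(\epsilon)$ to $B_q(\delta)$ is then a single separate application of Bishop--Gromov at $q$ (legitimate because $B_q(\delta)$ avoids $\partial M$), giving $\vol(B_q(\epsilon))\ge\theta\epsilon^m$. No geometric growth of radii is needed or permitted, and the "extend the chain near $p_i$ to inflate the radius" device you floated cannot be made to work inside $M$. Once the chaining is stated this way, the rest of your proposal — summing volumes of the disjoint $\epsilon/2$-balls against $\vol(M)\le V$ — goes through and recovers the stated bound up to the same kind of harmless constant bookkeeping the paper itself engages in.
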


\begin{rmrk}
Note that in this proposition,
we can use the volume of any ball centered in $M^\delta$
to estimate $\theta$ in (\ref{theta}).  This allows us
to study sequences like those in Example~\ref{ex-spline-1}.
One does not
need a Ricci curvature condition if one has a uniform lower
bound on the volumes of all balls centered in $M^\delta$
as can be seen in Proposition~\ref{unif-lower} 
in the review of Gromov-Hausdorff convergence.
\end{rmrk}

\begin{proof}
By Propositions~\ref{disjoint-to-cover} and~\ref{unif-lower} in the
review of Gromov-Hausdorff convergence, we need only to find
a uniform lower bound on the volume of an arbitrary ball
$B_p(\epsilon)$ centered at $p\in M^\delta$.  

Fix $q$ achieving the supremum in (\ref{V-epsilon}).  Then
by the fact that $B_q(\delta)$ does not hit $\partial M$ and
$M$ has nonnegative Ricci curvature, we may apply the Bishop-Gromov
Volume Comparison Theorem to see that,
\be \label{theta-epsilon}
\theta\le\frac{\vol(B_q(\delta))}{\delta^m}
\le \frac{\vol(B_q(\epsilon))}{\epsilon^m},
\ee
because $\delta>\delta/2>\epsilon$.   

Let $C:[0,1]\to M^\delta$
be the shortest curve from $p$ to $q$.  Then 
\be
L=L(C) \le\diam(M^\delta,{d_{M^\delta}})\le D_\delta.
\ee
Let $n> L /\epsilon$ and $x_j=C(t_j)$ where $t_j=j L/n$, so that
\be
x_j \in M^\delta \textrm{ and }d_M(x_{j-1}, x_j)=L/n < \epsilon. 
\ee
In particular $B_{x_j}(2\epsilon)$ lies within the interior of $M$
and has nonnegative Ricci curvature.  Thus by the 
Bishop-Gromov Volume Comparison Theorem,
\begin{eqnarray}
\vol(B_{x_j}(\epsilon)) &\ge& \frac{1}{2^m} \vol(B_{x_j}(2\epsilon) )\\
&\ge& \frac{1}{2^m} \vol(B_{x_{j+1}}(\epsilon) ).
\end{eqnarray}
Applying this repeatedly from $j=1$ to $n$, 
and finally applying (\ref{theta-epsilon}), we have
\begin{eqnarray}
\vol(B_{p}(\epsilon)) &\ge& \frac{1}{2^{mn}} \vol(B_{q}(\epsilon) )\\
&\ge& \frac{1}{2^{mD_\delta/\epsilon}} \vol(B_{q}(\epsilon) )\\
&\ge& \frac{1}{2^{mD_\delta/\epsilon}} \theta \epsilon^m .
\end{eqnarray}
The estimate on $N(\delta, \epsilon, D_\delta, V, \theta)$
then follows immediately from 
Propositions~\ref{disjoint-to-cover} and~\ref{unif-lower}.
\end{proof}

\section{Glued Limit Spaces} \label{sect-glued-limit-spaces}

In this section we define ``glued limit spaces" and ``completed glued limit spaces" providing a
study of their properties without making any curvature assumptions.   
We begin by constructing isometric embeddings
$\varphi_{\delta_{i+1}, \delta_i}: Y^{\delta_{i+1}}\to Y^{\delta_i}$
between the Gromov Hausdorff limits, $Y^{\delta_i}$, of inner
regions, $M_j^{\delta_i}$ [Theorem~\ref{thm-gluing-isoms}].  
We then apply these
isometric embeddings to glue together, $Y^{\delta_i}$, 
and construct a glued limit space, $Y=Y(\{\delta_i\}, \{\varphi_{\delta_{i+1}, \delta_i}\})$ [Theorem~\ref{def-glue}].  

We next study sequence of $M_j$ which converge in the Gromov-Hausdorff sense.   We prove that if the sequence has a completed glued limit space, then it is unique  [Theorem~\ref{thm-uniquenessGluedLimit}].
However the glued limit is not the Gromov-Hausdorff limit [Remark~\ref{rmrk-one-spline-2}], it might even be empty [Remark~\ref{rmrk-disappear-GH}],
and it need not exist [Remark~\ref{GH-limit-subseq-for-glued}]

Finally we construct some important examples of glued limit space for sequences which do not have Gromov-Hausdorff limits.
In Remark~\ref{rmrk-many-splines} we describe how Example~\ref{ex-many-splines} has a bounded 
and precompact glued limit space.   We provide another example
with a bounded glued limit space which is not precompact 
[Example~\ref{ex-many-pages-2}].   We provide an example where
the glued limit space is not a length space [Remark~\ref{glue-space-not-length}]. We close this section with Example~\ref{ex-nonunique} demonstrating that these glued limit spaces and their completions
depend on the isometric embeddings used to define them and need not be unique.

\subsection{Gluing the Inner Regions Together}

Here we prove the existence of isometric embeddings which
we will later apply as glue to connect the inner regions together.

\begin{thm}\label{thm-gluing-isoms}
Suppose that $\delta_i \to 0$ is decreasing, $i=0,1,2,\cdots$,
and one has a sequence of open manifolds $M_j$
such that 
\be
(\bar{M}_j^{\delta_i}, d_{M_j}) \GHto (Y^{\delta_i}, d_{Y^{\delta_i}})
\ee
for all $i$, where possibly some of these sequences
and their limits are eventually empty sets.
Then there exist subsequential limit isometric embeddings:
\be \label{subseqlim}
\varphi_{\delta_{i+1}, \delta_i} : Y^{\delta_i} \to Y^{\delta_{i+1}},
\ee
which are just the identity when $\delta_i=\delta_{i+1}$.
If $\delta \in (0,\delta_0]$ there exists a compact metric space, 
$Y^{\delta_{i}} \subset Y^\delta \subset Y^{\delta_{i+1}}$ with the restricted metric
$d_{Y^\delta}=d_{Y^{\delta_{i+1}}}$, and a converging subsequence
\be
(\bar{M}_{j_k}^\delta, d_{M_{j_k}}) \GHto (Y^\delta, d_{Y^\delta})
\ee 
and when $\delta \in (\delta_{i+1},\delta_{i})$ for any such $Y^\delta$ 
the restriction map,
$\varphi_{\delta, \delta_i} : Y^{\delta_i} \to Y^{\delta}$,
and the inclusion map,
$\varphi_{\delta_{i+1}, \delta} : Y^{\delta} \to Y^{\delta_{i+1}}
$
are isometric embeddings.
\end{thm}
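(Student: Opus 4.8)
The plan is to build the maps $\varphi_{\delta_{i+1},\delta_i}$ as limits of the inclusion maps $\bar{M}_j^{\delta_i}\hookrightarrow \bar{M}_j^{\delta_{i+1}}$ (which hold because $\delta_{i+1}<\delta_i$ implies $M^{\delta_i}\subset M^{\delta_{i+1}}$) and extract a convergent subsequence using the Gromov--Hausdorff Arzel\`a--Ascoli theorem (Theorem~\ref{GH-Arz-Asc}). First I would invoke Theorem~\ref{common-Z}: after passing to a subsequence, embed everything in a common compact metric space $Z$, isometrically, so that $\varphi_j(\bar M_j^{\delta_i})\Hto \varphi^{(i)}(Y^{\delta_i})$ in $Z$ for every $i$ simultaneously (diagonalize over $i$ as in the proof of Theorem~\ref{H-lim-gives-delta-lim}). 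In $Z$ the inclusion maps are literally set inclusions, so the ``maps'' $\iota_j:\bar M_j^{\delta_i}\to \bar M_j^{\delta_{i+1}}$ are $1$-Lipschitz (indeed distance-preserving, since both carry the restricted metric $d_{M_j}=d_Z$). Hence they are equicontinuous, and Theorem~\ref{GH-Arz-Asc} produces a subsequential limit $\varphi_{\delta_{i+1},\delta_i}:Y^{\delta_i}\to Y^{\delta_{i+1}}$ which is again an isometric embedding; when $\delta_i=\delta_{i+1}$ the maps $\iota_j$ are the identity so the limit is the identity.

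Next I would handle $\delta\in(\delta_{i+1},\delta_i)$. Applying Theorem~\ref{H-lim-gives-delta-lim} (or directly Blaschke, Theorem~\ref{Blaschke}) to $\{\varphi_{j_k}(\bar M_{j_k}^\delta)\}$ inside $Z$ along a further subsequence yields a compact limit set $W^\delta\subset Z$; set $Y^\delta$ to be $W^\delta$ with the restricted metric $d_Z$, so $(\bar M_{j_k}^\delta,d_{M_{j_k}})\GHto(Y^\delta,d_{Y^\delta})$. Since $M^{\delta_i}\subset M^\delta\subset M^{\delta_{i+1}}$ for $\delta_{i+1}<\delta<\delta_i$, the monotonicity of Hausdorff limits (the characterization of $A_\infty$ recalled in the Remark before the proof of Theorem~\ref{H-lim-gives-delta-lim}) gives $W^{\delta_i}\subset W^\delta\subset W^{\delta_{i+1}}$, i.e. $Y^{\delta_i}\subset Y^\delta\subset Y^{\delta_{i+1}}$ as subsets of $Z$ all carrying $d_Z$. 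The inclusion $\varphi_{\delta_{i+1},\delta}:Y^\delta\to Y^{\delta_{i+1}}$ is then a genuine set inclusion between subsets of the metric space $(Z,d_Z)$, hence an isometric embedding for free. For the restriction map $\varphi_{\delta,\delta_i}:Y^{\delta_i}\to Y^\delta$ I would again realize it as the limit of the inclusions $\bar M_{j_k}^{\delta_i}\hookrightarrow \bar M_{j_k}^\delta$ in $Z$ and apply Theorem~\ref{GH-Arz-Asc} exactly as before; since the inclusion of $Y^{\delta_i}$ into $Y^{\delta_{i+1}}$ factors as $\varphi_{\delta_{i+1},\delta}\circ\varphi_{\delta,\delta_i}$ and the outer two are isometric embeddings, $\varphi_{\delta,\delta_i}$ is forced to be one too.

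The only real subtlety — and the place I'd be most careful — is bookkeeping of subsequences: one must choose a single subsequence $\{j_k\}$ along which \emph{all} of the relevant limits exist at once (the countably many $\delta_i$, plus whichever $\delta\in(\delta_{i+1},\delta_i)$ one wants to treat), which is the standard diagonal argument already used in the proof of Theorem~\ref{H-lim-gives-delta-lim}; the statement quantifies ``for any such $Y^\delta$'', so it suffices that \emph{given} a convergent subsequence realizing $Y^\delta$ one can further refine it to also realize the embeddings, and then the embeddings are canonical once everything sits inside the fixed $(Z,d_Z)$. A minor point is that $Y^\delta$ is only determined up to the choice of subsequence, but the relations $Y^{\delta_i}\subset Y^\delta\subset Y^{\delta_{i+1}}$ and the isometric-embedding claims are about whatever $Y^\delta$ has been produced, so no uniqueness is needed here. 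Everything else is routine: equicontinuity is trivial because all maps in sight are $1$-Lipschitz (in fact distance-preserving) for the restricted metrics, and the monotonicity $Y^{\delta_1}\subset Y^{\delta_2}$ for $\delta_2<\delta_1$ is just inclusion-monotonicity of Hausdorff limits, already established in Theorem~\ref{H-lim-gives-delta-lim}.
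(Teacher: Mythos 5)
Your overall strategy — realize the gluing maps as subsequential limits of the literal inclusion maps $\bar M_j^{\delta_i}\hookrightarrow \bar M_j^{\delta_{i+1}}$ (which are distance‑preserving since both sides carry the restricted metric $d_{M_j}$), use Blaschke for the intermediate $Y^\delta$, and use monotonicity of Hausdorff limits for the nesting — is exactly the paper's strategy; your use of Theorem~\ref{GH-Arz-Asc} in place of the paper's ``Blaschke plus uniqueness of GH limits'' step is a clean, essentially equivalent variant.

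However, the opening move is wrong as stated. You ``invoke Theorem~\ref{common-Z}: after passing to a subsequence, embed everything in a common compact metric space $Z$ \dots for every $i$ simultaneously.'' Theorem~\ref{common-Z} produces a compact $Z$ for a \emph{single} GH-convergent sequence. Here you have one convergent sequence $\{\bar M_j^{\delta_i}\}_j$ for each $i$, and you want a compact $Z$ containing compatible isometric images of all of them at once. That cannot exist in the interesting cases: if such a compact $Z$ contained isometric copies of every $\bar M_j^{\delta_i}$ compatibly with the inclusions, it would contain isometric copies of $\bar M_j=\overline{\bigcup_i M_j^{\delta_i}}$, and Blaschke (Theorem~\ref{Blaschke}) would then force a subsequence of $\bar M_j$ to converge in Hausdorff, hence Gromov--Hausdorff, sense — contradicting the whole point of Theorem~\ref{thm-gluing-isoms}, which is designed for sequences $M_j$ with \emph{no} GH limit (cf.\ Examples~\ref{ex-many-splines}, \ref{ex-decreasing-splines}, \ref{ex-many-pages-2}, where in the last two the glued limit is not even precompact, so it certainly does not sit in any compact $Z$). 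The paper carefully avoids this trap by using a \emph{separate} $Z_i$ for each consecutive pair $(\delta_i,\delta_{i+1})$ — i.e.\ the $Z$ provided by Theorem~\ref{common-Z} for the single convergent sequence $\{\bar M_j^{\delta_{i+1}}\}_j$ — and all the Blaschke/monotonicity arguments for the triple $Y^{\delta_i}\subset Y^\delta\subset Y^{\delta_{i+1}}$ live inside that one $Z_i$.

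The fix is local and does not disturb the rest of your argument: for the construction of $\varphi_{\delta_{i+1},\delta_i}$ you do not need any ambient $Z$ at all — Theorem~\ref{GH-Arz-Asc} applies directly to $f_j=\iota_j:\bar M_j^{\delta_i}\to\bar M_j^{\delta_{i+1}}$ using only that each sequence converges in GH separately. For the intermediate $\delta\in(\delta_{i+1},\delta_i)$, replace ``$Z$'' by ``$Z_i$'' throughout; then $W^{\delta_i}\subset W^\delta\subset W^{\delta_{i+1}}$ are genuine nested subsets of $Z_i$, the inclusion $Y^\delta\to Y^{\delta_{i+1}}$ is an isometric embedding for free, and the restriction map is obtained exactly as you say. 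With that correction your proof coincides with the paper's.
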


\begin{proof}
By Theorem~\ref{common-Z} 
for each $i$
there exists a compact metric space $Z_i$ and 
isometric embeddings 
\be
\varphi_{j}:\bar{M}^{\delta_{i+1}}_{j} \to Z_i \textrm{ and }
\varphi_{\infty}:Y^{\delta_{i+1}} \to Z_i
\ee
such that
\be
\varphi_{j}(\bar{M}^{\delta_{i+1}}_{j})
\Hto \varphi_{\infty}(Y^{\delta_{i+1}}).
\ee
By Theorem~\ref{Blaschke} we can choose a subsequence $\{j_{k}\}_{k=1}^{\infty}$
such that $\varphi_{j_k}(\bar{M}^{\delta_i}_{j_k})$
converge in Hausdorff sense to a compact subspace $X^{\delta_{i}} \subset \varphi_{\infty}(Y^{\delta_{i+1}})$. By the hypothesis, 
\be
\bar{M}^{\delta_i}_{j_k} \GHto Y^{\delta_{i}}.
\ee
Then by uniqueness up to an isometry of the Gromov-Hausdorff limit space 
there exists an isometric embedding 
\be
\varphi_{\delta_{i+1}, \delta_{i}}:Y^{\delta_i} \to Y^{\delta_{i+1}}
\textrm{ such that }
\varphi_{\delta_{i+1}, \delta_{i}}(Y^{\delta_i})=\varphi_{\infty}^{-1}(X^{\delta_{i}}).
\ee

By Theorem~\ref{GH-converse} there is a uniform upper bound, $D_i>0$,  of the diameters of $(\bar{M}_{j}^{\delta_i},d_{M_ {j}})$ 
and a function $N_i:(0,D_i] \to \N$ such that $N_i(\epsilon)$ is an upper bound for the number of $\epsilon-balls$ needed to cover $\bar{M}_{j}^{\delta_i}$
for all $\epsilon \in (0,D_i]$ and for all $j\in \N$.
If $\delta \in (\delta_{i+1},\delta_i)$, define $N:(0,D_{i+1}/2] \to \N$ by
$N(\epsilon)=N_{i+1}(2\epsilon)$.  Then
\be
\diam(\bar{M}_j^\delta, {d_{M_j}}) \leq 
\diam(\bar{M}_{j}^{\delta_{i+1}}, {d_{M_j}}) \leq D_{i+1}.
\ee 
Apply Theorem~\ref{GH-compactness-1} to
get a subsequence $\{l_{k}\}_{k=1}^{\infty}$ of $\{j_{k}\}_{k=1}^{\infty}$
such that $\varphi_{l_k}(\bar{M}^{\delta_i}_{l_k})$ converge in Hausdorff sense to a
closed subset $X^{\delta} \subset \varphi_{\infty}(Y^{\delta_{i+1}})$. 

We define 
\be
Y^\delta=\varphi_{\infty}^{-1}(X_\delta)\subset Y^{\delta_{i+1}}.
\ee
The choice of 
a subsequence $\{l_{k}\}_{k=1}^{\infty}$ of $\{j_{k}\}_{k=1}^{\infty}$ implies that $X^{\delta_i} \subset Y^{\delta}$.   
So $Y^{\delta_i} \subset Y^{\delta}$.   The rest of the theorem immediately 
follows.
\end{proof}

\begin{rmrk}
The choice of isometric embeddings $\varphi_{\delta_{i+1},\delta_i}$
is not unique.  See Example~\ref{ex-nonunique} where we provide
two distinct isometric embeddings 
$\varphi_{\delta_{i+1},\delta_i}\neq \varphi'_{\delta_{i+1},\delta_i}$.
\end{rmrk}

\subsection{Glued Limit Spaces are Defined}

We now define a glued limit space for a sequence of
Riemannian manifolds satisfying the hypothesis of Theorem~\ref{thm-gluing-isoms}.   We prove that this glued limit space is a metric space unless
it is the empty set.
We prove that it contains isometric images of all Gromov-Haudorff limits of converging
subsequences of inner regions (which may be empty).   
An example of a sequence
of open Riemannian manifolds which has an empty glued limit
space will be given in Remark~\ref{rmrk-disappear-GH}.   Our
definitions of a glued limit space and a completed glued limit space 
are stated along with their construction in the following theorem:

\begin{thm} \label{def-glue}
Given a sequence of open Riemannian manifolds, $M_j$,
with a sequence $\delta_i \to 0$ satisfying
the hypothesis of Theorem~\ref{thm-gluing-isoms},
one can define a ``glued limit space", $Y$, using the subsequential
limit isometric embeddings of (\ref{subseqlim})
as follows:
\be
Y=Y(\{\delta_i\}, \{\varphi_{\delta_{i+1}, \delta_{i}} \} )=
Y^{\delta_0} \,\disjointunion \, \left( \,\disjointunion \,_{i=1}^\infty 
\left(Y^{\delta_{i+1}}\setminus  \varphi_{\delta_{i+1}, \delta_i}\left(Y^{\delta_i}\right)\right)\right)
\ee
with the metric:
\[ 
d_Y(x,y) =\left\{
\begin{array}{ll}
d_{Y^{\delta_0}}(x,y)&  \textrm{ if } x,y \in Y^{\delta_0},\\
&\\
d_{Y^{\delta_{i+1}}}(x,y)&\textrm{ if } x,y \in Y^{\delta_{i+1}}
\setminus \varphi_{\delta_{i+1}, \delta_i}\left(Y^{\delta_i}\right) ,\\
&\\
d_{Y^{\delta_{i+1}}}\left(x, \varphi_{\delta_{i+1},\delta_0}(y)\right) &
\textrm{ if } x \in Y^{\delta_{i+1}}\setminus 
\varphi_{\delta_{i+1}, \delta_{i}}\left(Y^{\delta_{i}}\right)\\
& \textrm{ for some } i \in \N \textrm{ and } y \in Y^{\delta_0}, \\
d_{Y^{\delta_{i+j+1}}}\left(x,
\varphi_{\delta_{i+j+1},\delta_{{i+1}}}(y)\right) & 
\textrm{ if } x \in Y^{\delta_{i+j+1}}\setminus 
\varphi_{\delta_{i+j+1}, \delta_{i+j}}\left(Y^{\delta_{i+j}}\right)\\
& \textrm{ and } y\in Y^{\delta_{i+1}}\setminus 
\varphi_{\delta_{i+1}, \delta_i}\left(Y^{\delta_i}\right)
\textrm{ for some } i,j \in \N 
\end{array}
\right.
\]
where we set
\be
\varphi_{\delta_{i+j}, \delta_i}=\varphi_{\delta_{i+j}, \delta_{i+j-1}}
\circ \cdots \circ \varphi_{\delta_{i+1}, \delta_{i}}.
\ee
This glued limit is not defined using an arbitrary collection of
isometric embeddings but rather only those achieved as in Theorem~\ref{thm-gluing-isoms}.

Furthermore, for all $\delta\in (0,\delta_0]$ 
there exists a subsequence $M^{\delta}_{j_k}$ 
which converges in Gromov-Hausdorf sense to a compact
metric space $Y^\delta$ and for any such $Y^\delta$ there exists an isometric embedding 
\be
F_\delta=F_{\delta, \{\delta_i\}}:Y^{\delta} \to Y.   
\ee
such that for the $\delta_i$ in our sequence we have
\be \label{check-justified}
F_{\delta_i}(Y^{\delta_i}) \subset F_{\delta_{i+1}}(Y^{\delta_{i+1}}) 
\ee
If $\beta_j$ is any sequence decreasing to $0$, then
\be\label{eqn-onto-Y-d}
Y=\bigcup_{i=1}^{\infty} F_{\beta_j}(Y^{\beta_j}).
\ee

We say that a sequence of open Riemannian manifolds, $M_j$, has a glued 
limit space, $Y$, if there exists a sequence of $\delta_i\to 0$, satisfying the
hypothesis of this theorem.   A ``completed glued limit" is defined to be
the metric completion of a glued limit space, $\bar{Y}$, and the ``boundary of a glued limit space" is
the set, $\bar{Y}\setminus Y$.  
\end{thm}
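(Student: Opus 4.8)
The plan is to build the glued limit space $Y$ as an honest metric space first, then produce the isometric embeddings $F_\delta$ and verify the covering property. First I would construct $Y$ as the disjoint union of $Y^{\delta_0}$ with the ``new parts" $Y^{\delta_{i+1}}\setminus\varphi_{\delta_{i+1},\delta_i}(Y^{\delta_i})$, and check that the stated formula for $d_Y$ is well-defined and a metric. The only subtle point is that the four cases of the definition must agree on overlaps and satisfy the triangle inequality. The key tool here is the cocycle identity $\varphi_{\delta_{i+j},\delta_i}=\varphi_{\delta_{i+j},\delta_{i+j-1}}\circ\cdots\circ\varphi_{\delta_{i+1},\delta_i}$ together with the fact that each $\varphi_{\delta_{i+1},\delta_i}$ is an isometric embedding and, by Theorem~\ref{thm-gluing-isoms}, the metrics are compatible: $d_{Y^{\delta_{i+1}}}$ restricted to $\varphi_{\delta_{i+1},\delta_i}(Y^{\delta_i})$ equals the pushforward of $d_{Y^{\delta_i}}$. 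Concretely, for any two points in $Y$, there is some index $n$ large enough that both points have canonical representatives inside $Y^{\delta_n}$ (via the appropriate composite $\varphi$), and $d_Y$ is defined to be their distance there; one checks this is independent of $n$ by the isometry property, and then symmetry, positivity, and the triangle inequality are inherited from $(Y^{\delta_n},d_{Y^{\delta_n}})$. If all $Y^{\delta_i}$ are eventually empty then $Y$ is empty and there is nothing to prove.

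Next I would define the maps $F_\delta$. For $\delta=\delta_i$ in the chosen sequence, $F_{\delta_i}$ should send $Y^{\delta_i}$ into $Y$ by decomposing $Y^{\delta_i}$ along the filtration $\varphi_{\delta_i,\delta_0}^{-1}(\cdot)$ into its ``strata": a point of $Y^{\delta_i}$ either comes from $Y^{\delta_0}$ (send it to that point of $Y^{\delta_0}\subset Y$), or it first appears at some level $\delta_{k+1}$ with $k+1\le i$, i.e. it lies in $\varphi_{\delta_i,\delta_{k+1}}(Y^{\delta_{k+1}}\setminus\varphi_{\delta_{k+1},\delta_k}(Y^{\delta_k}))$ but not below, in which case send it to its unique preimage in that new part. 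That $F_{\delta_i}$ is an isometric embedding is immediate from the compatibility of the metrics, and $F_{\delta_i}(Y^{\delta_i})\subset F_{\delta_{i+1}}(Y^{\delta_{i+1}})$ because $\varphi_{\delta_{i+1},\delta_i}$ identifies $Y^{\delta_i}$ with a subspace of $Y^{\delta_{i+1}}$ and this is compatible with the stratification. For general $\delta\in(0,\delta_0]$, Theorem~\ref{thm-gluing-isoms} gives a subsequence with $\bar M^\delta_{j_k}\GHto Y^\delta$ and, for $\delta\in(\delta_{i+1},\delta_i)$, isometric embeddings $\varphi_{\delta,\delta_i}:Y^{\delta_i}\to Y^\delta$ and $\varphi_{\delta_{i+1},\delta}:Y^\delta\to Y^{\delta_{i+1}}$ with $\varphi_{\delta_{i+1},\delta}\circ\varphi_{\delta,\delta_i}=\varphi_{\delta_{i+1},\delta_i}$; composing the inclusion of $Y^\delta$ into $Y^{\delta_{i+1}}$ with $F_{\delta_{i+1}}$ gives $F_\delta$.

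For the covering statement $Y=\bigcup_j F_{\beta_j}(Y^{\beta_j})$ for an arbitrary sequence $\beta_j\downarrow 0$: every point $y\in Y$ lies in $Y^{\delta_0}$ or in some new part at level $\delta_{i+1}$, hence lies in $F_{\delta_{i+1}}(Y^{\delta_{i+1}})$; picking $\beta_j<\delta_{i+1}$ and using the monotonicity $F_{\delta_{i+1}}(Y^{\delta_{i+1}})\subset F_{\beta_j}(Y^{\beta_j})$ (which follows from $Y^{\delta_{i+1}}\subset Y^{\beta_j}$ for $\beta_j<\delta_{i+1}$, via Theorem~\ref{thm-gluing-isoms}) puts $y$ in the right-hand side. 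Conversely each $F_{\beta_j}(Y^{\beta_j})\subset Y$ by construction.

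The main obstacle I expect is verifying that $d_Y$ as written is genuinely a metric, in particular the triangle inequality across three points living in three different strata at three different levels. The resolution is the stabilization remark above: choose a single level $\delta_n$ past all three strata, push all three points into $Y^{\delta_n}$ by the appropriate composites $\varphi$, and observe that the piecewise definition of $d_Y$ agrees with $d_{Y^{\delta_n}}$ on each pair (this is where the cocycle identity and the isometry property of each $\varphi_{\delta_{i+1},\delta_i}$ are used in an essential way); then the triangle inequality is just the triangle inequality in the compact metric space $Y^{\delta_n}$. Everything else is bookkeeping.
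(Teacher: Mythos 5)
Your proposal is correct and follows essentially the same route as the paper: you define $F_{\delta_i}$ by stratifying $Y^{\delta_i}$ according to when a point first appears, deduce monotonicity of the images $F_{\delta_i}(Y^{\delta_i})$ from the cocycle identity, and verify the triangle inequality by pushing any three points into a common $Y^{\delta_n}$ where $F_{\delta_n}$ is an isometric embedding. The only cosmetic difference is that the paper checks positive definiteness of $d_Y$ directly first and defers the triangle inequality until after the $F_\delta$ are shown to be isometric embeddings, whereas you fold both into the same ``stabilize at a common level'' argument.
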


\begin{rmrk} \label{rmrk-one-spline}
In Example~\ref{ex-one-spline} for sufficiently large $\delta_{0}$
each limit $Y^{\delta}$ is a ball in Euclidean 2-dimensional space.
According to Definition~\ref{def-glue} the glued limit space of this sequence
is constructed by taking the disjoint union of the ball $Y^{\delta_{0}}$ with concentric annulus $Y^{\delta_0/i+1}\setminus Y^{\delta_0/i}$.
\end{rmrk}

\begin{rmrk}
Note that the definition of the glued limit space depends on the choice of 
$\delta_i$ and the isometric embeddings
in Theorem~\ref{thm-gluing-isoms}.   Even if one fixes the sequence $\delta_i\to 0$ the glued limit need not be unique. 
See Example~\ref{ex-nonunique}.
\end{rmrk}

\begin{proof}
We first prove that $d_{Y}$ is positive definite.
For the first and second cases of the definition of $d_{Y}$,
we immediately see that $d_{Y}(x,y)=0$ iff $x=y$. 
For the third and fourth cases, notice that 
$\varphi_{\delta_{i+j+1},\delta_{i+1}}(y)=
(\varphi_{\delta_{i+j+1},\delta_{i+j}} \circ  
\varphi_{\delta_{i+j},\delta_{i+1}})(y)$
then 
\be
\varphi_{\delta_{i+j+1},\delta_{i+1}}(y)
\in \varphi_{\delta_{i+j+1},\delta_{i+j}}(Y^{\delta_{i+j}}).
\ee
Thus $x \ne \varphi_{\delta_{i+j+1},\delta_{i+1}}(y)$
and $d_Y(x,y)= d_{Y^{\delta_{i+j+1}}}\left(x,
\varphi_{\delta_{i+j+1},\delta_{i+1}}(y)\right)\ne 0$.

Define $F_{\delta_{i}}:Y^{\delta_{i}} \to Y$ in the following way: 

\[ 
F_{\delta_i}(y)=\left\{
\begin{array}{ll}
y & i=1\\
& \\
y & i>1,\;\;\; y \in Y^{\delta_i}\setminus \varphi_{\delta_i, \delta_{i-1}}(Y^{\delta_{i-1}})\\
& \\
\varphi^{-1}_{\delta_i,\delta_0}(y) & i>1, \;\;\; \varphi^{-1}_{\delta_i,\delta_0}(y) \in Y^{\delta_0}\\
&\\
\varphi^{-1}_{\delta_i,\delta_j}(y) & i>1,\;\;\; \varphi^{-1}_{\delta_i,\delta_j}(y)
\in Y^{\delta_j}\setminus \varphi_{\delta_j, \delta_{j-1}}(Y^{\delta_{j-1}})\\
& \text{for some } j>1
\end{array}
\right.
\]

What we are doing in the third and fourth part of the definition of $F_{\delta_i}$ is the following. Suppose that $y \in Y^{\delta_{0}/i}$
then either 
\be
y\in Y^{\delta_i}\setminus  \varphi_{\delta_i, \delta_{i-1}}(Y^{\delta_{i-1}})
\ee
 or $y \in \varphi_{\delta_i, \delta_{i-1}}(Y^{\delta_{i-1}})$. In the latter case there exists $y_{i-1} \in Y^{\delta_{i-1}}$ such that $y=\varphi_{\delta_i, \delta_{i-1}}(y_{i-1})$. If $i-1>1$, either $y_{i-1} \in Y^{\delta_{i-1}}\setminus  \varphi_{\delta_{i-1}, 
\delta_{i-2}}(Y^{\delta_{i-2}})$ or $y_{i-1} \in \varphi_{\delta_{i-1}, \delta_{i-2}}(Y^{\delta_{i-2}})$. Proceeding in the same way, if necessary, 
we find $j$ such that there exists $y_{j} \in Y^{\delta_j} \setminus \varphi_{\delta_j,\delta_{j-1}}(Y^{\delta_{j-1}})$ 
when $j>1$ and $y_{j} \in Y^{\delta_j}$ when $j=1$, 
that satisfies $y=\varphi_{\delta_i, \delta_j}(y_{j})$. 

It is easy to see that 
\be\label{ImfFdelta0/i}
F_{\delta_i}(Y^{\delta_i})=Y^{\delta_0} \union \left( \,\disjointunion \,_{j=1}^{i-1} 
\left(Y^{\delta_{j+1}}\setminus  \varphi_{\delta_{j+1}, \delta_j}\left(Y^{\delta_j}\right)\right)\right),
\ee
and for $j<i$, 
\be
F_{\delta_j}=F_{\delta_i} \circ \varphi_{\delta_i,\delta_j}.
\ee

For arbitrary $\delta$, by Theorem~\ref{thm-gluing-isoms}, there
exists a subsequence, $M^\delta_{j_k}$, which converges in
the GH sense to a limit $Y^\delta$.
Let $F_\delta: Y^\delta\to Y$
\[ 
F_{\delta}=F_{ \{\delta, \{\delta_i\}\}}=\left\{
\begin{array}{ll}
F_{\delta_{0}} \circ \varphi_{\delta_{0},\delta} & \delta_{0} < \delta \\
F_{\delta_{i+1}} \circ \varphi_{\delta_{i+1},\delta} & \delta_{i+1} \leq \delta < \delta_i
\end{array}
\right.
\]
where $\varphi_{\delta_{0},\delta},\varphi_{\delta_{i+1},\delta} $ are given in Theorem~\ref{thm-gluing-isoms}.

 Observe that in the latter case of the definition of $F_{\delta}$, $F_{\delta_i}=F_{\delta} \circ \varphi_{\delta,\delta_i}$. This and the definition of $F_{\delta}$ gives:

\be\label{OrderedImF}
F_{\delta_i}(Y^{\delta_i}) \subset F_{\delta}(Y^{\delta}) \subset F_{\delta_{i+1}}(Y^{\delta_{i+1}}).
\ee

Now we have $\beta_j$ decreasing to $0$, there exists $N$
sufficiently large that $\beta_j \le \delta_0$, and for
all $j \ge N$ we have
$\exists i_j$ such that $\beta_j \in [\delta_{i+1}, \delta_i)$.
From (\ref{ImfFdelta0/i}) and (\ref{OrderedImF}), taking
$\delta=\beta_i$, we conclude that
\be
Y=  \bigcup_{j=N}^\infty F_{\beta_j}(Y^{\beta_j})
= 
\bigcup_{j=1}^\infty F_{\beta_j}(Y^{\beta_j})
\ee 
because $F_{\beta_0}(Y^{\beta_0})\subset F_{\beta_N}(Y^{\beta_N})$.

To prove that $F_{\delta}$ is an isometric embedding it is enough to prove
that for each $F_{\delta_i}$. $F_{\delta_{0}}$ is an isometric embedding
by definition of $Y$. For $F_{\delta_{i+1}}$ we must check
three cases. Let $x,y \in Y^{\delta_{i+1}}$. First case:
$x,y\in Y^{\delta_{i+1}}\setminus  \varphi_{\delta_{i+1}, \delta_i}(Y^{\delta_i})$ then $F_{\delta_{i+1}}(x)=x$ and
$F_{\delta_{i+1}}(y)=y$. Second case:
$x \in Y^{\delta_{i+1}}\setminus  \varphi_{\delta_{i+1}, \delta_i}(Y^{\delta_i})$ and $y \in \varphi_{\delta_{i+1}, \delta_i}(Y^{\delta_i})$. 
Thus, $F_{\delta_{i+1}}(y)=\varphi^{-1}_{\delta_{i+1},\delta_{i+1-j}}(y) \in Y^{\delta_{i+1-j}} \setminus \varphi_{\delta_{i+1-j},\delta_{i-j}}(Y^{\delta_{i-j}})$ for some $j$. So
\begin{align*}
d_Y(F_{\delta_{i+1}}(x), F_{\delta_{i+1}}(y))&=
d_{Y^{\delta_{i+1-j}}}(F_{\delta_{i+1}}(x), \varphi_{\delta_{i+1},\delta_{i+1-j}}(F_{\delta_{i+1}}(y)))\\
&=d_{Y^{\delta_{i+1}}}(x,y).
\end{align*}
Finally, if $F_{\delta_{i+1}}(x)=\varphi^{-1}_{\delta_{i+1},\delta_{i+1-k}}(x)$, $F_{\delta_{i+1}}(y)=\varphi^{-1}_{\delta_{i+1},\delta_{i+1-j}}(y)$. Suppose that $k \leq j$. Recall that  
$\varphi_{\delta_{i+1},\delta_{i+1-k}}
\circ \varphi_{\delta_{i+1-k},\delta_{i+1-j}}=\varphi_{\delta_{i+1},\delta_{i+1-j}}$, then

\begin{align*}
d_Y(F_{\delta_{i+1}}(x), F_{\delta_{i+1}}(y))&=d_{Y^{\delta_{i+1-k}}}(
F_{\delta_{i+1}}(x),
\varphi_{\delta_{i+1-k},\delta_{i+1-j}}(F_{\delta_{i+1}}(y)))
\\
&= 
d_{Y^{{\delta_{i+1}}}}(
\varphi_{\delta_{i+1},\delta_{i+1-k}}(F_{\delta_{i+1}}(x)),
\varphi_{\delta_{i+1},\delta_{i+1-j}}(F_{\delta_{i+1}}(y)))
\\
&= d_{Y^{\delta_{i+1}}}(x,y).
\end{align*}

 The triangle inequality follows from the above paragraphs. For $x,y,z \in Y$, find
$\delta$ such that $x,y,z \in F_{\delta}(Y^{\delta})$. The triangle inequality holds for the preimages of $x,y,z$ and since $F_{\delta}$ is an isometric embedding, it also
holds for $x,y,z$. 
\end{proof}

\subsection{Glued Limits within Gromov-Hausdorff Limits}

Recall that in Theorem~\ref{H-lim-gives-delta-lim} 
we proved that if a sequence of
open Riemannian manifolds, $M_j$, has a Gromov-Hausdorff 
limit, $X$, then subsequences of the inner regions, $M_j^\delta$,
have Gromov-Hausdorff limits.   Here we assume that the $M_j$
also have a (possibly empty) completed glued limit space 
as in Theorem~\ref{def-glue}.   We
prove that this completed glued limit space is unique and provide a 
precise description as to how to find this completed glued limit space
as a subset of the Gromov-Hausdorff limit
[Theorem~\ref{thm-uniquenessGluedLimit}].   

Note that the completed glued limit need not agree with the
Gromov-Hausdorff limit [Remark~\ref{rmrk-one-spline-2}].   
In fact we provide an example where the completed glued limit
space is empty [Remark~\ref{rmrk-disappear-GH}].

It should be emphasized that we must assume the $M_j$ have
a completed glued limit to obtain uniqueness.   It is possible
that a sequence $M_j$ has a Gromov-Hausdorff limit and that one
needs a subsequence to obtain a glued limit and that different 
subsequences provide different completed glued limits 
[See Remark~\ref{GH-limit-subseq-for-glued}].

\begin{thm}\label{thm-uniquenessGluedLimit}
Let $\{M_j\}$ be a sequence of open manifolds 
that converge
in Gromov-Hausdorf sense to a compact metric space $(X,d_X)$.
Suppose $Y$ is a glued limit space of the $\{M_j\}$
defined as in Theorem~\ref{def-glue}.  Then the completed
glued limit 
$\bar{Y}$ is isometric to the closure, $\bar{U}_{\{j_k\}}\subset X$, of
any limit's inner union,  ${U}_{\{j_k\}}\subset X$,
defined as in Definition~\ref{defn-GH-U} for any subsequence $j_k$.   
In particular any completed glued limit and the closure of
any of the limit's inner regions are isometric.   
\end{thm}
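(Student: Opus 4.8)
The plan is to realize both spaces — the completed glued limit $\bar Y$ and the closure $\bar U_{\{j_k\}}$ of a limit's inner union inside $X$ — as metric completions of increasing unions of the \emph{same} building blocks $Y^{\delta_i}$, assembled by the \emph{same} isometric embeddings, so that the two completions are forced to be isometric. First I would fix a sequence $\delta_i \to 0$ witnessing the hypothesis of Theorem~\ref{thm-gluing-isoms} that defines $Y$, and I would fix a subsequence $\{j_k\}$ so that, by Theorem~\ref{GH-lim-gives-delta-lim} applied with this $\{\delta_i\}$, the inner regions $\bar M_{j_k}^{\delta_i}$ converge in Gromov-Hausdorff sense to compact sets $Y^{\delta_i}(j_k) \subset X$ with $Y^{\delta_{i+1}}(j_k) \subset Y^{\delta_i}(j_k)$ and $U_{\{j_k\}} = \bigcup_i Y^{\delta_i}(j_k)$. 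By uniqueness of Gromov-Hausdorff limits up to isometry, for each $i$ there is an isometry $\Psi_i : Y^{\delta_i} \to Y^{\delta_i}(j_k)$, where $Y^{\delta_i}$ is the abstract limit space used to build $Y$.

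The key step is to check compatibility of these isometries with the gluing maps. The inclusions $\iota_i : Y^{\delta_{i+1}}(j_k) \hookrightarrow Y^{\delta_i}(j_k)$ inside $X$ are isometric embeddings with respect to $d_X$, and by the uniqueness argument in the proof of Theorem~\ref{thm-gluing-isoms}, after composing with the $\Psi_i$ they must agree with the subsequential-limit embeddings $\varphi_{\delta_{i+1},\delta_i} : Y^{\delta_i}\to Y^{\delta_{i+1}}$ up to an isometry of $Y^{\delta_{i+1}}$ — more precisely, one can choose the $\Psi_i$ inductively so that $\Psi_i \circ \varphi_{\delta_{i+1},\delta_i}^{-1}|_{\varphi(Y^{\delta_i})} = \iota_i \circ \Psi_{i+1}$ on the relevant subsets. (Here I am using that $\varphi_{\delta_{i+1},\delta_i}$ identifies $Y^{\delta_i}$ with a subset of $Y^{\delta_{i+1}}$, and that the restricted metric $d_X$ on $Y^{\delta_i}(j_k)$ matches $d_{Y^{\delta_i}}$ under $\Psi_i$, which is exactly the ``restricted metric'' bookkeeping built into Theorem~\ref{thm-gluing-isoms}.) Granting this, the maps $\Psi_i$ patch together: every point of $Y$ lies in $F_{\delta_i}(Y^{\delta_i})$ for some $i$ by \eqref{eqn-onto-Y-d}, and $F_{\delta_i}(Y^{\delta_i}) \subset F_{\delta_{i+1}}(Y^{\delta_{i+1}})$, so defining $\Phi$ on $F_{\delta_i}(Y^{\delta_i})$ by $\Phi = \Psi_i \circ F_{\delta_i}^{-1}$ is well-defined (independent of $i$) and, since each $F_{\delta_i}$ and each $\Psi_i$ is an isometric embedding, $\Phi : Y \to U_{\{j_k\}}$ is a surjective isometry.

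Finally, since $U_{\{j_k\}}$ is dense in $\bar U_{\{j_k\}}$ (its closure in $X$) and $Y$ is dense in $\bar Y$, an isometry $Y \to U_{\{j_k\}}$ extends uniquely to an isometry $\bar Y \to \bar U_{\{j_k\}}$ of the metric completions. For the last sentence of the statement, I would note that any two subsequences $\{j_k\}$, $\{j'_k\}$ (possibly with different witnessing sequences $\{\delta_i\}$, $\{\beta_i\}$) both produce completed glued limits isometric to a fixed $\bar Y$ — using \eqref{same-U-diff subseq} to see the inner union does not depend on the choice of decreasing sequence within a fixed subsequence, and the argument just given to compare across subsequences — hence all of them are mutually isometric.

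The main obstacle is the compatibility step: choosing the isometries $\Psi_i$ coherently so that they intertwine the abstract gluing maps $\varphi_{\delta_{i+1},\delta_i}$ with the concrete inclusions $\iota_i \subset X$. This requires running the uniqueness-of-GH-limit argument not just space-by-space but for the whole directed system at once, i.e.\ an inductive construction of the $\Psi_i$ where at stage $i+1$ one uses the freedom in the isometry $Y^{\delta_{i+1}} \to Y^{\delta_{i+1}}(j_k)$ to match the already-fixed $\Psi_i$ on the image of $Y^{\delta_i}$. One must also be slightly careful that some $Y^{\delta_i}$ (and hence some $Y^{\delta_i}(j_k)$) may be empty, in which case the corresponding stages are vacuous and the construction simply starts at the first nonempty level.
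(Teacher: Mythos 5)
Your overall strategy — realize both $\bar Y$ and $\bar U_{\{j_k\}}$ as completions of unions of the same blocks $Y^{\delta_i}$ and transport one to the other — is the natural idea, but the step you flagged as the ``main obstacle'' really is a gap, and the proposed inductive fix does not close it. You want to choose isometries $\Psi_i : Y^{\delta_i} \to Y^{\delta_i}(j_k)$ so that $\Psi_{i+1}\circ\varphi_{\delta_{i+1},\delta_i} = \iota_i\circ\Psi_i$, and you suggest doing this at stage $i+1$ by ``using the freedom in the isometry $Y^{\delta_{i+1}}\to Y^{\delta_{i+1}}(j_k)$.'' But the underlying extension principle is false: an isometry between compact subsets of two isometric compact spaces need not extend to an isometry of the ambient spaces. (For instance take $B=B'=\{0,1,10\}\subset\R$, $A=A'=\{0,1\}$ and $\alpha(0)=1,\alpha(1)=0$; no isometry of $B$ restricts to $\alpha$ because $10$ must be fixed and distances to $10$ rigidify $\{0,1\}$.) In the present setting the two embeddings of $Y^{\delta_i}$ into $Y^{\delta_{i+1}}$ — the abstract $\varphi_{\delta_{i+1},\delta_i}$ produced inside some auxiliary $Z_i$ as in Theorem~\ref{thm-gluing-isoms}, and the concrete inclusion $\iota_i$ inside $X$ — are genuinely different subsequential-limit objects, so there is no reason an isometry of $Y^{\delta_{i+1}}$ carries one image onto the other. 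Without that, the $\Psi_i$ need not patch, so $\Phi$ is not well-defined and surjectivity is unavailable.

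The paper sidesteps exactly this. Rather than trying to make the level-$i$ identifications mutually compatible, it fixes $i$, considers for each large $n$ the \emph{restriction} of the (uncoordinated) isometry $\psi_{\delta_n} : F_{\delta_n}(Y^{\delta_n})\to Y^{\delta_n}(j_k)\subset X$ to the fixed compact set $F_{\delta_i}(Y^{\delta_i})$, and extracts a subsequential limit $\psi_{i,\infty} : F_{\delta_i}(Y^{\delta_i})\to \bar U_{\{j_k\}}$ by Arzel\`a--Ascoli compactness. For a fixed $n$, the restriction to $F_{\delta_i}$ \emph{is} literally a restriction of the restriction to $F_{\delta_{i+1}}$, so after diagonalizing the resulting limits $\psi_{i,\infty}$ are automatically nested. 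This produces only an isometric \emph{embedding} $\bar{\psi}_\infty : \bar Y \to \bar U_{\{j_k\}}$, not surjectivity, so the paper then runs the symmetric construction to get an isometric embedding the other way and invokes the standard fact that two compact metric spaces which isometrically embed into one another are isometric. If you want to salvage your single-isometry route, you would need to either prove the extension/compatibility claim in this specific GH-limit setting, or replace the inductive matching by the limiting-of-restrictions device; as written the proposal assumes what it needs to prove.
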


We do not claim all the limit's inner regions are the same subset of
$X$ and in fact this is not true even after taking a closure. 
They are only isometric to one another. See Example~\ref{ex-F}.

\begin{proof}
Let $Y$ be a glued limit space defined using Theorems~\ref{def-glue} 
and~\ref{thm-gluing-isoms} via a sequence of isometric
embeddings $\varphi_j$ of $M^{\delta_i}_j \subset M^{\delta_{i+1}}_j$
into a sequence of compact metric spaces $Z_i$ rather than a single 
compact metric space $Z$.   

Since we have assumed the original sequence of
Riemannian manifolds has a glued limit space $Y$ without requiring
a subsequence, the following spaces are isometric:
\be
Y_{\{\delta_i\},\{j_k\}} \tilde{=}Y_{\{\delta_i\}}\tilde{=}Y_{\{\delta_i\},\{j'_k\}}
\ee
for any pair of subsequences $\{j_k\}$ and $\{j_k'\}$.

Recall that Theorem~\ref{def-glue}
states that for each $\delta>0$ 
there exists an isometric embedding 
\be
F_\delta:Y^{\delta} \to Y 
\ee
and
\be
 Y=\bigcup_{i=1}^\infty F_{\delta_i}(Y^{\delta_i})
 \textrm{ with } F_{\delta_i}(Y^{\delta_i}) \subset F_{\delta_{i+1}}(Y^{\delta_{i+1}}).
\ee

Since $Y^\delta$ is the Gromov-Hausdorff limit of the
inner regions $M^\delta_j$, it is isometric to the limit of
the inner regions $Y^\delta(j_k)\subset U_{\{j_k\}}\subset X$
of Theorem~\ref{GH-lim-gives-delta-lim}.   
Note that we need a subsequence for each $\delta$ to produce the limit of the
inner regions.   We can produce
a diagonal subsequence (also denoted $\{j_k\}$) such that 
\be
Y^\delta(j_k)\subset U_{\{j_k\}}\subset X \textrm{ is defined for all }
\delta\in \{\delta_i\}.
\ee   
So we have
isometric embeddings, 
\be
\psi_{\delta_i}: F_{\delta_i}(Y^{\delta_i})\subset Y \to Y^\delta(j_k)\subset X.
\ee

Since $F_{\delta_i}(Y^{\delta_i}) \subset F_{\delta_{i+1}}(Y^{\delta_{i+1}})$
for each $i$ and any $h$ we may study the restriction
\be
\psi_{\delta_{i+h}}: F_{\delta_i}(Y^{\delta_i})\subset Y \to 
Y^{\delta_{i+h}}(j_k) \subset U_{\{j_k\}}\subset X.
\ee
Since $F_{\delta_i}(Y^{\delta_i})$ and $X$ are compact, we can find a
subsequence $h_k$ depending on $i$ which converges to a limit
isometric embedding:
\be
\psi_{i,\infty}: F_{\delta_i}(Y^{\delta_i})\subset Y \to \bar{U}_{\{j_k\}}\subset X.
\ee
We may do this for each $i$ and diagonalize the subsequences if we
wish.   Since $\psi_{\delta_{i+h}}$ is a restriction of $\psi_{\delta_{i+1+h}}$
we see that $\psi_{i,\infty}$ is a restriction of $\psi_{i+1,\infty}$.  Thus we
may define an isometric embedding
\be
\psi_\infty: Y \to \bar{U}_{\{j_k\}}\subset X.
\ee
Extending this we have an isometric embedding:
\be
\bar{\psi}_\infty: \bar{Y} \to \bar{U}_{\{j_k\}}.
\ee
Since $X$ is compact, $\bar{U}_{\{j_k\}}$ is compact and thus so is
$\bar{Y}$.

We need only construct an isometric embedding from
$\bar{U}_{\{j_k\}}$ to $\bar{Y}$ to prove that these spaces are isometric because they are compact metric spaces.   
We repeat the same trick
as above but now using the fact that we have isometries 
\be
F'_{\delta_i}: Y^{\delta_i}(j_k)\to F_{\delta_i}(Y^{\delta_i})\subset Y
\ee
and
\be
Y=\bigcup_{i=1}^\infty F'_{\delta_i}(Y^{\delta_i}(j_k)).
\ee
with
\be
F'_{\delta_i}(Y^{\delta_i}(j_k)) 
\subset F'_{\delta_{i+1}}(Y^{\delta_{i+1}}(j_k)). 
\ee
Since $Y^{\delta_i}(j_k) \subset Y^{\delta_{i+1}}(j_k)$,
we may study for each $i$ and any $h$ the restriction
\be
F'_{i+h}: Y^{\delta_i}(j_k) \to Y\subset \bar{Y}.
\ee
Since we have shown $\bar{Y}$ is compact, a subsequence 
converges for each $i$ (and we can diagonalize these subsequences),
so that we obtain isometric embeddings
\be
F'_{i,\infty}: Y^{\delta_i}(j_k) \to  \bar{Y}.
\ee 
Since $F'_{i,\infty}$ is a restriction of $F'_{i+1,\infty}$ we can define
an isometric embedding
\be
F'_\infty: U_{\{j_k\}} \to \bar{Y}.
\ee
This extends to an isometric embedding from $\bar{U}_{\{j_k\}}$
to $\bar{Y}$.   Since we have a pair of isometric embeddings between a pair
of compact metric spaces, these metric spaces are isometric.

\end{proof}

\begin{rmrk}\label{rmrk-one-spline-2}
It is possible that the completed glued limit is not the same as the
Gromov-Hausdorff limit.
Example~\ref{ex-one-spline} has a glued limit which is
an open disk in Euclidean
space, its completed glued limit is the closed disk 
while its Gromov-Hausdorff limit
is a disk with a line segment attached.   
\end{rmrk}

\begin{rmrk}\label{rmrk-disappear-GH}
The glued limit of a sequence of open
Riemannian manifolds may exist but be the empty set.   See for example
the sequence $M_{2j+1}$ in Example~\ref{ex-why-subseq-2}.
This sequence converges in the Gromov-Hausdorff sense
but $U$ is an empty set.   It only satisfies 
the conditions of Theorem~\ref{thm-gluing-isoms}
in a trivial way: for each $\delta>0$
there exists $N_\delta\in \N$ such that 
$M_j^\delta = \emptyset$ for all $j \ge N_\delta$.
\end{rmrk}

\begin{rmrk}\label{GH-limit-subseq-for-glued}
A sequence of $M_j$ which converges
in the Gromov-Hausdorff sense may not have 
a glued limit space.  In fact one may need to
take a subsequence to obtain a glued limit and
different subsequences might produce 
different glued limit spaces.   In 
Examples~\ref{ex-why-subseq-1}-~\ref{ex-why-subseq-3}, 
the subsequence
$M_{2j}$ has a completed glued limit space which is isometric to 
$\bar{U}_{\{2j\}}$ and the subsequence 
$M_{2j+1}$ has a completed glued limit space which is isometric
to $\bar{U}_{\{2j+1\}}$, but the sequence $M_j$ itself does
not have a glued limit space.   We thus see that the different
glued limits obtained using different subsequences 
are quite different.   In particular in Example~\ref{ex-why-subseq-2}
the completed glued limit of the $M_{2j}$ agrees with the Gromov-Hausdorff
limit of the $M_j$ while the completed glued limit of $M_{2j+1}$ is empty.
\end{rmrk}

\subsection{Glued Limit Spaces when there are no Gromov-Hausdorff limits}

In the setting of Theorem~\ref{thm-gluing-isoms}, the subsequence
of manifolds $M_j$ such that $M_j^\delta \GHto Y^\delta$ need
not have any Gromov-Hausdorff limit. Here we discuss
an old example and present two new
examples.

\begin{rmrk} \label{rmrk-many-splines}
In Example~\ref{ex-many-splines} which had increasingly many splines,
it was seen that the Gromov-Hausdorff
limit for the sequence $M_{j}$ described there does not exist. 
However the sequence $M^{\delta}_{j}$ converges to 
the metric completition of the annulus $Ann_{(0,0)}(1+\delta,2-\delta)$ with 
the flat metric, see Remark~\ref{rmrk-many-splines-delta}. 
Start with $\delta_{0} < 1/2$ then 
\be
Y^{\delta_{0}}=Ann_{(0,0)}[1+\delta_0,2-\delta_0],
\ee
and
\be
Y^{\delta_0/(i+1)}\setminus  \varphi_{\delta_0/(i+1), \delta_0/i}
\left(Y^{\delta_0/i}\right)=
A_1 \cup A_2
\ee
where
\begin{eqnarray}
A_1&=&Ann_{(0,0)}[1+\delta_0/(i+1),1+\delta_0/i \,) \textrm{ and } \\
A_2 &=& Ann_{(0,0)}(2-\delta_0/i,2-\delta_0/(i+1)].
\end{eqnarray}
Thus $Y=Ann_{(0,0)}(1,2)$ with the flat length metric. This glued limit space $Y$ is precompact.  
\end{rmrk}

A similar example, also constructed using flat $M_j \subset \E^2$
with no Gromov-Haudorff limit has converging $M_j^\delta$, and a glued limit
space which is a flat open manifold that is bounded but not 
precompact:

\begin{ex}\label{ex-decreasing-splines}
We define a flat open manifold with $j$ splines of decreasing width:
\begin{eqnarray}
M_j &=& U_j \cup V_j \textrm{ where }\\
U_j&=&\{(r\cos(\theta), r\sin(\theta)): \,\,
r<4+ \sin (4\pi^2/\theta), \, \theta\in (2\pi/ j , 2\pi]\} \\
V_j&=& \{(r\cos(\theta), r\sin(\theta)): \,\,
r<4, \, \theta\in (0,2\pi/ j]\}.
\end{eqnarray}
As in Example~\ref{ex-many-splines},
$(M_j, d_{M_j})$ have no Gromov-Haudorff limit because 
they have increasingly many splines.   
Unlike Example~\ref{ex-many-splines}, for any number
$N$, there exists $\delta_N$ sufficiently small that
$M_j^{\delta_N}$ has $N$ splines.  In fact, 
\be
(M_j^\delta, d_{M_j})\GHto (Y^\delta, d_Y)
\ee
where $Y^\delta$ is $\delta$ inner region of the flat open manifold:
\begin{eqnarray}
Y &=& \{(r\cos(\theta), r\sin(\theta)): \,\,
r<4+ \sin (4\pi^2/\theta), \, \theta\in (0 , 2\pi]\} 
\end{eqnarray}
Taking the identity maps to be the isometric embeddings, we see
that $Y$ is also a glued limit space for the $M_j$ even though it
is bounded but not precompact.
\end{ex}

Recall Example~\ref{ex-many-pages} of a sequence of surfaces
which have no Gromov-Hausdorff limit.  We now modify this
example to obtain a sequence of manifolds with boundary that have
no Gromov-Haudorff limit but whose $\delta$ inner regions have Gromov-Haudorff limits
and we construct the glued limit space and see that it is also
bounded and not precompact.   This glued limit space is not a manifold.

\begin{ex}\label{ex-many-pages-2}
Let
\be
X_j = \big( [0,1]\times[0,1]\big) \,\disjointunion \, \big( [0,1]\times [0,1/2] \big)
\,\disjointunion \, \cdots \,\disjointunion \, \big( [0,1]\times [0,1/2^j] \big)
\ee
be a disjoint union of spaces with taxicab metrics glued with a gluing
map $\psi(0,y)=(0,y)$.  One may think of $X_j$ as a book with $j$ pages
of decreasing height glued along a spine on the left.  Within $X_j$ 
choose sets $A_j$ to be the union of the top edges of each of the pages.
If we take surfaces $M_j$ as constructed in Proposition~\ref{prop-lattice}
they now have boundary, such that
\be
d_{GH}(M_j, X_j)\to 0 \textrm{ and } d_{GH}(M_j^\delta, X_j \setminus T_\delta(A_j) ) \to 0.
\ee
As in Example~\ref{ex-many-pages}, the $M_j$ have no GH
converging subsequence because the $X_j$ have no GH
converging subsequence.

Observe that there exists $k_\delta$ such that for all $j>k_\delta$,
\be
X_j\setminus T_\delta(A_j) =
\big( [0,1]\times[0,\delta)\big) \,\disjointunion \, \big( [0,1]\times [0,1/2-\delta] \big)
\,\disjointunion \, \cdots \,\disjointunion \, \big( [0,1]\times [0,1/2^{k_\delta}-\delta] \big).
\ee
Since this sequence does not depend on $j$, it clearly converges
in the Gromov-Hausdorff sense.   Thus $M_j^\delta$ converge
to the same Gromov-Hausdorff limit space.   In fact they converge to
$X_\infty \setminus T_\delta(A_\infty)$ where
\be
X_\infty = \big( [0,1]\times[0,1]\big) \,\disjointunion \, \big( [0,1]\times [0,1/2] \big)
\,\disjointunion \, \cdots \,\disjointunion \, \big( [0,1]\times [0,1/2^j] \big) \cdots
\ee
and $A_\infty$ is the union of the tops of all of these pages.
In fact, $X_\infty$ is the glued limit space.
\end{ex}

\subsection{A Glued Limit Space which is not Geodesic}

Here we present an example whose glued limit space is not
geodesic or even a length space (and neither is its metric completion):

\begin{ex}\label{glue-space-not-length}
In Euclidean space, $\E^2$, define 
\be
M_j=(-1,1)\times(-1,1)\setminus [-1/2,1/2]\times[0,1-1/j].
\ee
Then for $\delta<1/4$ there is $J=J(\delta)$ such that 
\be
M^{\delta}_j= \big((-1+\delta,1-\delta)\times(-1+\delta,-\delta)\big) \,\disjointunion \, \big((-1+\delta,-1/2-\delta)\times(-\delta,1-\delta)\big) \,\disjointunion \,
\big((1/2-\delta,1-\delta,)\times(-\delta,1-\delta)\big)\ee for $j \geq J$.

Thus $\bar{M}^{\delta}_j$ is a constant sequence for $j \geq J$ and 
$\bar{M}^{\delta}_j \GHto Y^\delta$
where 
\be Y^{\delta}=[-1+\delta,1-\delta]\times[-1+\delta,0]
\,\cup\, [-1+\delta,-1/2+\delta]\times[0,1-\delta]
\,\cup\, [1/2-\delta,1-\delta]\times[0,1-\delta].\ee
The completed glued limit
is not a length space:
\be
\bar{Y}=[-1,1]\times[-1,0]\,\cup\, [-1,-1/2]\times[0,1]\,\cup\, [1/2,1]\times[0,1] \subset \E^2
\ee 
Note that 
$\bar{M}_j \GHto X=Y \cup \big(\{1\}\times[-1/2,1/2]\big)$.
\end{ex}

\begin{question} \label{q-local-geod}
Is a glued limit space locally geodesic: for all $y\in Y$, does there
exist $\epsilon_y>0$ such that $B(y, \epsilon_y)$ is geodesic?   If
there is a counter example, what conditions can be imposed on the
space to guarantee that it is locally geodesic?
\end{question}

\subsection{Balls in Glued Limit Spaces}

Recall that earlier we proved that for any $p\in M^{\delta_i}$,
if $x\in B_p(\delta_i-\delta_{i+1}) \subset M$, then $x\in M^{\delta_{i+1}}$
[Lemma~\ref{ball-in-M-delta}].
This is not true for glued limit spaces.   That is, it is possible for
$p\in F_{\delta_i}(Y^{\delta_i})$ to have an 
$x\in B_p(\delta_i-\delta_{i+1}) \subset Y$ such that 
$x\notin F_{\delta_{i+1}}(Y^{\delta_{i+1}})$.   In fact we can take the
ball of arbitrarily small radius and still 
$x\notin F_{\delta_{i+1}}(Y^{\delta_{i+1}})$.

Here we present such an example:

\begin{ex}\label{ex-bad-balls}
Recall Example~\ref{ex-many-pages-2} where we constructed
$M_j$ that have no Gromov-Hausdorff limit such that
$M_j^\delta$ converge in the Gromov-Hausdorff sense to
$Y^\delta=X_\infty \setminus T_\delta(A_\infty)$ where
\be
X_\infty = \big( [0,1]\times[0,1]\big) \,\disjointunion \, \big( [0,1]\times [0,1/2] \big)
\,\disjointunion \, \cdots \,\disjointunion \, \big( [0,1]\times [0,1/2^j] \big) \cdots
\ee

where each piece is connected along $(0,y) \sim(0,y)$
and $A_\infty$ is the union of the tops of all of these pages.
This $X_\infty$ is a glued limit space for this example.

Then $F_\delta(Y^\delta)=X_\infty \setminus T_\delta(A_\infty)$.
Take any ball about the common point $(0,0)\in X_\infty$.
For any radius $r>0$, $B_{(0,0)}(r)$ contains infinitely many
points $y_j=(r/2,0)\in [0,1]\times[0,1/2^j]$.   However
$y_j \notin F_\delta(Y^\delta)$, for $j$ sufficiently large that 
$1/2^j < \delta$.   
\end{ex}

\subsection{Nonuniqueness of the Glued Limit Space}

We now see that glued limit spaces and completed glued limit spaces
are not necessarily unique.
Recall that in Theorem~\ref{thm-uniquenessGluedLimit} we explained that
if $M_j$ have a Gromov-Hausdorff limit, then the completed glued limit space
is unique.  So we need to construct a sequence of manifolds,
$M_j$, which have no Gromov-Hausdorff limit.   
In fact we will imitate Example~\ref{ex-many-pages-2}
applying Proposition~\ref{prop-lattice} to construct the
following example:

\begin{ex} \label{ex-nonunique}
There is a sequence of Riemannian
surfaces, $M_j$, with boundary, $\partial M_j$, such that
there exists $\delta_i \to 0$ and metric spaces $Y^{\delta_i}$
such that
\be
d_{GH}(M_j^{\delta_i}, Y^{\delta_i}) \to 0
\ee
yet there are two different glued limit spaces 
$Y_1=Y(\delta_{2i}, \varphi_{\delta_{2i}, \delta_{2i+2}})$ 
and $Y_2=Y(\delta_{2i}, \varphi'_{\delta_{2i}, \delta_{2i+2}})$ 
constructed as in Theorem~\ref{def-glue}
and Theorem~\ref{thm-gluing-isoms} and their metric
completions are not isometric.
\end{ex}

\begin{proof}
Let
\begin{eqnarray}
P_1 &=& [0,1]\times [-1/2,1/2] \\ 
P_2&=& [0,1]\times [-1/4,1/4]\\
P_3&=& [0,1]\times [-1/6, 1/6]\\
P_j&=&  [0,1]\times [-1/(2j), 1/(2j)]
\end{eqnarray}
and let
\begin{eqnarray}
X_j&=&P_1 \,\disjointunion \, P_2 \,\disjointunion \, P_2 \\
&& \,\disjointunion \, P_3 \,\disjointunion \, P_3 \,\disjointunion \, P_3 \,\disjointunion \, P_3 \\
&& \,\disjointunion \, P_4 \,\disjointunion \, P_4 \,\disjointunion \, P_4 \,\disjointunion \, P_4 
\,\disjointunion \, P_4 \,\disjointunion \, P_4 \,\disjointunion \, P_4 \,\disjointunion \, P_4 \\
&& \,\disjointunion \, P_5 \,\disjointunion \, \cdots \,\disjointunion \, P_5 \\
&&\,\disjointunion \, \cdots \,\disjointunion \, P_j \,\disjointunion \, \cdots \,\disjointunion \, P_j
\end{eqnarray}
be a disjoint union of $N_j=1+2+4+...+2^{j-1}$ spaces endowed with 
taxicab metrics glued with a gluing
map $\psi(0,y)=(0,y)$.  One may think of $X_j$ as a book with $N_j$ pages
of different heights glued along a spine on the left.  

Let $H_j \subset P_j$ be defined by
\begin{eqnarray}
H_j &=& [0,1]\times \{-1/2j\} \, \cup  \, \{1\} \times [-1/(2j),1/(2j)] 
  \, \cup \, [0, 1]\times \{1/(2j)\} \subset P_j
\end{eqnarray}
and let $A_j \subset X_j$ be defined,
\begin{eqnarray}
A_j&=&H_1 \,\disjointunion \, H_2 \,\disjointunion \, H_2 \\
&& \,\disjointunion \, H_3 \,\disjointunion \, H_3 \,\disjointunion \, H_3 \,\disjointunion \, H_3 \\
&& \,\disjointunion \, H_4 \,\disjointunion \, H_4 \,\disjointunion \, H_4 \,\disjointunion \, H_4 
\,\disjointunion \, H_4 \,\disjointunion \, H_4 \,\disjointunion \, H_4 \,\disjointunion \, H_4 \\
&& \,\disjointunion \, H_5 \,\disjointunion \, \cdots \,\disjointunion \, H_5 \\
&&\,\disjointunion \, \cdots \,\disjointunion \, H_j \,\disjointunion \, \cdots \,\disjointunion \, H_j.
\end{eqnarray}

If we take surfaces $M_j$ as constructed in Proposition~\ref{prop-lattice}
they now have boundary, such that
\be
d_{GH}(M_j, X_j)\to 0 \textrm{ and } d_{GH}(M_j^\delta, X_j \setminus T_\delta(A_j) ) \to 0.
\ee
As in Example~\ref{ex-many-pages}, the $M_j$ have no GH
converging subsequence because the $X_j$ have no GH
converging subsequence.

Now
\begin{eqnarray}
X_j\setminus T_\delta(A_j)&=&
(P_1\setminus T_\delta(H_1)) \,\disjointunion \, 
(P_2 \setminus T_\delta(H_2) )\,\disjointunion \, 
(P_2 \setminus T_\delta(H_2) )\\
&& \,\disjointunion \, (P_3\setminus T_\delta(H_3)) \,\disjointunion \, 
\cdots \,\disjointunion \, (P_3\setminus T_\delta(H_3)) \\
&&\,\disjointunion \, \cdots \,\disjointunion \, 
(P_j \setminus T_\delta(H_j))\,\disjointunion \, \cdots \,\disjointunion \, 
(P_j \setminus T_\delta(H_j) ).
\end{eqnarray}
Observe that 
\be
P_j \setminus T_\delta(H_j)
= [0, 1-\delta]\times [-1/(2j)\,+\delta, 1/(2j) \, -\delta]
\ee
 
Taking $\delta=\delta_{2i}=1/(2i)$ and $j>i$ we have
\be
P_{i} \setminus T_\delta(H_{i})
= [0, 1-1/(2i)]\times \{0\} 
\ee
and
\be
P_j \setminus T_\delta(H_j) =\emptyset
\ee
Thus
\begin{eqnarray}
X_j\setminus T_\delta(A_j)&=&
(P_1\setminus T_\delta(H_1)) \,\disjointunion \, 
(P_2 \setminus T_\delta(H_2) )\,\disjointunion \, 
(P_2 \setminus T_\delta(H_2) )\\
&& \,\disjointunion \, (P_3\setminus T_\delta(H_3)) \,\disjointunion \, 
\cdots \,\disjointunion \, (P_3\setminus T_\delta(H_3)) \\
&&\,\disjointunion \, \cdots \,\disjointunion \, 
(P_{i-1} \setminus T_\delta(H_{i-1})\,\disjointunion \, \cdots \,\disjointunion \, 
(P_{i-1} \setminus T_\delta(H_{i-1}) ) \\
&&\,\disjointunion \, 
 [0, 1-1/(2i)]\times \{0\} \,\disjointunion \, \cdots \,\disjointunion \,  [0, 1-1/(2i)]\times \{0\}
\end{eqnarray}
endowed with 
taxicab metrics glued with a gluing
map $\psi(0,y)=(0,y)$. 
There are $1+2+4+...+2^{(i-1)-1}$ rectangular pages 
and $2^{(i-1)}$ pages that are just
intervals of length $1-1/(2i)$.    Taking $j \to \infty$ we get
\be
d_{GH}(X_j\setminus T_\delta(A_j), Y^\delta) \to 0
\ee
where 
\be
Y^{\delta_{2i}}=Y^{1/(2i)}=X_j\setminus T_{\delta_{2i}}(A_j) \qquad \forall j>i.
\ee
So
\begin{eqnarray}
Y^{\delta_{2i}}&=&
(P_1\setminus T_{1/(2i)}(H_1)) \,\disjointunion \, 
(P_2 \setminus T_{1/(2i)}(H_2) )\,\disjointunion \, 
(P_2 \setminus T_{1/(2i)}(H_2) )\\
&& \,\disjointunion \, (P_3\setminus T_{1/(2i)}(H_3)) \,\disjointunion \, 
\cdots \,\disjointunion \, (P_3\setminus T_{1/(2i)}(H_3)) \\
&&\,\disjointunion \, \cdots \,\disjointunion \, 
(P_{2i-1} \setminus T_{1/(2i)}(H_{2i-1})\,\disjointunion \, \cdots \,\disjointunion \, 
(P_{2i-1} \setminus T_{1/(2i)}(H_{2i-1}) ) \\
&&\,\disjointunion \, 
 [0, 1-1/(2i)]\times \{0\} \,\disjointunion \, \cdots \,\disjointunion \,  [0, 1-1/(4i)]\times \{0\}
\end{eqnarray}
endowed with 
taxicab metrics glued with a gluing
map $\psi(0,y)=(0,y)$
where there are $1+2+4+...+2^{(i-1)-1}$ rectangular pages 
and $2^{(i-1)}$ pages that are just
intervals of length $1-1/(2i)$.

If we define $\varphi_{\delta_{2i}, \delta_{2i+2}}:
Y^{\delta_{2i}}\to Y^{\delta_{2i+2}}$ to be the inclusion map, and then
construct the glued
limit space as in Theorem~\ref{thm-gluing-isoms} we obtain,
\begin{eqnarray}
Y_1=Y(\delta_{2i}, \varphi_{\delta_{2i}, \delta_{2i+2}})=Y
&=&
(P_1\setminus H_1) \,\disjointunion \, 
(P_2 \setminus H_2 )\,\disjointunion \, 
(P_2 \setminus H_2 )\\
&& \,\disjointunion \, (P_3\setminus H_3) \,\disjointunion \, 
\cdots \,\disjointunion \, (P_3\setminus H_3) \cdots\\
&& \cdots \,\disjointunion \, (P_j\setminus H_j) \,\disjointunion \, 
\cdots \,\disjointunion \, (P_j\setminus H_j) \cdots
\end{eqnarray}
endowed with 
taxicab metrics glued with a gluing
map $\psi(0,y)=(0,y)$.
This has infinitely many pages, all shaped like rectangles.

Now we define $\varphi'_{\delta_{2i}, \delta_{2i+2}}:
Y^{\delta_{2i}}\to Y^{\delta_{2i+2}}$ to be an
isometric embedding which maps a point
\be
(x,y) \in P_k \setminus T_{\delta_{2i}}(H_k) 
\subset Y^{\delta_{2i}}
\ee
for $k< i$ to 
\be
(x,y) \in P_k \setminus T_{\delta_{2i+2}}(H_k) 
\subset Y^{\delta_{2i+2}}
\ee
via the inclusion map and which maps
\be
(x,y)\in P_{i} \setminus T_{\delta_{2i}}(H_{i})= [0, 1-1/(2i)]\times \{0\}
\subset Y^{\delta_{2i}}
\ee
to
\be\label{claim-claim}
(x, y-\delta_{2i}+\delta_{2i+2}) \in 
P_{i+1} \setminus T_{\delta_{2i+2}}(H_{i+1}) 
=[0, 1-1/(2i+2)]\times \{0\}
\subset Y^{\delta_{2i+2}}.
\ee
This is possible because we have enough copies
of $P_{i+1} \setminus T_{\delta_{2i+2}}(H_{i+1})$
in $Y^{\delta_{2i+2}}$.

In particular $\varphi'_{\delta_{2i}, \delta_{2i+2}}$ maps the
interval pages into interval pages.   If we then construct the glued
limit space as in Theorem~\ref{thm-gluing-isoms} we obtain,
\be
Y_2=Y(\delta_{2i}, \varphi'_{\delta_{2i}, \delta_{2i+2}})=
Y \,\disjointunion \,
 [0, 1]\times \{0\} \,\disjointunion \, 
 [0, 1]\times \{0\} \,\disjointunion \, 
 [0, 1]\times \{0\} \,\disjointunion \, 
 \cdots 
\ee
which has infinitely many pages that are intervals in addition to
all the pages shaped like rectangles.   
So we have two
distinct glued limit spaces for the sequence $\delta_{2i}=1/(2i)$
and their metric completions are not isometric.
\end{proof}

\section{Glued Limits under Curvature Bounds} \label{sect-curv}

In this section we prove the existence of glued limits of sequences
of manifolds with certain natural geometric conditions [Theorems~\ref{glued-constsec-limits} and~\ref{glued-Ricci-limits}].   We do
not require the sequences of manifolds themselves to have
Gromov-Hausdorff limits.

\subsection{Constructing Glued Limits of Manifolds with Constant Sectional Curvature}

In this section we prove that if $M_j \in \mathcal{M}^{m, V,l}_{H}$ (see
Definition~\ref{def-sc-const}) then the sequence has a glued limit space 
[Theorem~\ref{glued-constsec-limits}].  The sequence need not have
a Gromov-Hausdorff limit (see Remark~\ref{gcl-rmrk}).  

\begin{thm}\label{glued-constsec-limits}
Given any $\delta_0>0$ if  $(M_j, g_j) \subset \mathcal{M}^{m, V,l}_{H}$, then
there is a Gromov-Hausdorff convergent subsequence 
$\{M^{\delta_0}_{j_k}\}$ and a 
glued-limit space $Y$ such that for all $\delta\in (0,\delta_0]$ 
there exists a further subsequence $\{j_k'\}$ of $\{j_k\}$ for which
$M^{\delta}_{j_k'}$ converges in Gromov-Hausdorf sense to a compact
metric space $Y^\delta$ and for any such $Y^\delta$ there exists an isometric embedding 
\be
F_{\delta}:Y^{\delta} \to Y.
\ee
\end{thm}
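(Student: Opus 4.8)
The plan is to combine Theorem~\ref{sc-const} with a diagonal argument and then invoke Theorems~\ref{thm-gluing-isoms} and~\ref{def-glue}. First I would fix a strictly decreasing sequence $\delta_i\to 0$ whose first term is the given $\delta_0$, say $\delta_i=\delta_0/(i+1)$. Since membership in $\mathcal{M}^{m,V,l}_{H}$ is preserved under passing to subsequences, Theorem~\ref{sc-const} may be applied repeatedly: beginning with the full sequence, extract a subsequence $\{j_k^{(0)}\}$ along which $\bar{M}^{\delta_0}_{j_k^{(0)}}$ converges in the Gromov--Hausdorff sense to a compact metric space $Y^{\delta_0}$; then, inductively, given $\{j_k^{(i-1)}\}$, apply Theorem~\ref{sc-const} with $\delta=\delta_i$ to the subsequence to obtain a further subsequence $\{j_k^{(i)}\}\subset\{j_k^{(i-1)}\}$ with $\bar{M}^{\delta_i}_{j_k^{(i)}}\GHto Y^{\delta_i}$. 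The uniform diameter bound in Theorem~\ref{sc-const} guarantees each $Y^{\delta_i}$ is compact, and some of the $M^{\delta_i}_j$ (and their limits) may be empty, which is permitted in the sequel. Putting $j_k=j_k^{(k)}$, the single subsequence $\{j_k\}$ satisfies $\bar{M}^{\delta_i}_{j_k}\GHto Y^{\delta_i}$ for every $i$, since $\{j_k\}_{k\ge i}$ is a subsequence of $\{j_k^{(i)}\}$.

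Now $\{M_{j_k}\}$ together with the sequence $\{\delta_i\}$ satisfies precisely the hypotheses of Theorem~\ref{thm-gluing-isoms}, which produces the subsequential limit isometric embeddings $\varphi_{\delta_{i+1},\delta_i}:Y^{\delta_i}\to Y^{\delta_{i+1}}$. Feeding these into Theorem~\ref{def-glue} yields the glued limit space $Y=Y(\{\delta_i\},\{\varphi_{\delta_{i+1},\delta_i}\})$. This $Y$, and the Gromov--Hausdorff convergent subsequence $\{\bar{M}^{\delta_0}_{j_k}\}$ just constructed, are the objects demanded by the statement.

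Finally, for an arbitrary $\delta\in(0,\delta_0]$: if $\delta=\delta_i$ for some $i$ we already have $F_{\delta_i}$ from Theorem~\ref{def-glue}. Otherwise, applying Theorem~\ref{sc-const} once more, now with this $\delta$, along the subsequence $\{j_k\}$, gives a further subsequence $\{j_k'\}$ for which $\bar{M}^{\delta}_{j_k'}\GHto Y^{\delta}$ for some compact metric space $Y^\delta$; the ``Furthermore'' clause of Theorem~\ref{def-glue} (whose input for arbitrary $\delta$ is supplied by the analogous construction carried out inside Theorem~\ref{thm-gluing-isoms}) then provides the isometric embedding $F_\delta=F_{\delta,\{\delta_i\}}:Y^\delta\to Y$, compatible with the $F_{\delta_i}$ via the nesting relations recorded in Theorem~\ref{def-glue}.

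There is no serious obstacle here: the proof is an assembly of results already proven. The one point requiring care is the bookkeeping of the diagonalization---one must check that $\mathcal{M}^{m,V,l}_{H}$-membership, the uniform bound on extrinsic diameter, and the uniform ball-covering bound underlying Theorem~\ref{sc-const} all persist along each successively chosen subsequence, so that Theorem~\ref{sc-const} is legitimately applicable at every stage and so that the hypotheses of Theorems~\ref{thm-gluing-isoms} and~\ref{def-glue} are genuinely met by the diagonal subsequence.
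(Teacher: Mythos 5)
Your proposal is correct and follows essentially the same route as the paper: fix $\delta_i \to 0$ starting from $\delta_0$, apply Theorem~\ref{sc-const} iteratively to extract nested subsequences, pass to the diagonal subsequence $j_k = j_k^{(k)}$, and then invoke Theorem~\ref{def-glue} (which itself subsumes Theorem~\ref{thm-gluing-isoms}). The only difference is cosmetic: you cite Theorem~\ref{thm-gluing-isoms} explicitly before Theorem~\ref{def-glue}, whereas the paper relies on the latter's statement to pull in the former implicitly.
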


\begin{rmrk} \label{gcl-rmrk}
The sequences  of flat surfaces, $M_j \subset \E^2$, defined in Example~\ref{ex-many-splines} and 
Example~\ref{ex-decreasing-splines} have a common finite upper volume bound but there is no common finite
upper bound for the number of disjoint balls of $M_j$ of radius less than 1. Thus, these two sequences
do not have a Gromov-Hausdorff limit. Nonetheless since 
\be
L_{min}(M_j)=\inf\{ L_g(C): \,\, C \textrm{ is a closed geodesic in $M_j$ } \} > l
\ee
Theorem~\ref{glued-constsec-limits} demonstrates that we can construct
glued limits for these spaces.
\end{rmrk}

\begin{rmrk}
The choice of the a further subsequence $\{j_k'\}$ of $\{j_k\}$ in Theorem~\ref{glued-constsec-limits} is necessary.
Let  $(M_j, g_j) \subset \mathcal{M}^{2, V,l}_{0}$ be the sequence defined in
defined in Example~\ref{ex-no-diag}.  
Take $\delta_0=3\varepsilon$. Then $\{M^{\delta_0}_j\}$ is a Gromov-Hausdorffconvergent sequence. 
Choosing $2\varepsilon \in (0,\delta_0]$ we see that
$\bar{M}^{2\varepsilon}_{2j}$ converges in Gromov-Hausdorff sense but 
$\bar{M}^{2\varepsilon}_{j}$
does not. 
\end{rmrk}

\begin{proof}
Consider the sequence $\delta_0$, $\delta_i=\delta_0/i$, $i \in \N$. Start with $\delta_{0}$. 
By Theorem~\ref{sc-const} there exist a subsequence $\{j_{k}(\delta_{0})\}_{k=1}^{\infty}$ of $\{j\}_{j=1}^{\infty}$
and a compact metric space $Y^{\delta_0}$ such that 
\be  
 \left(\bar{M}_  {{j_{k}(\delta_{0})}},d_{M_ {{j_{k}(\delta_{0})}}}\right)
 \GHto \left(Y^{\delta_0}, d_{Y^{\delta_0}}\right).
 \ee
Proceeding as before 
for $n \in \N$, there is a subsequence $\{j_{k}(\delta_{n})\}_{k=1}^{\infty}$ of
$\{j_{k}(\delta_{n-1})\}_{k=1}^{\infty}$
and a compact metric space $Y^{\delta_n}(j_{k}(\delta_{n}))$ such that 
\be  
(\bar{M}_{j_{k}(\delta_{n})}^{\delta_n},d_{M_ {{j_{k}(\delta_{n})}}}) \GHto Y^{\delta_n}.
\ee
Define $j_{k}=j_{k}(\delta_{k})$.  We have 
\be  
(\bar{M}_{j_{k}}^{\delta_n},d_{M_ {{j_{k}}}}) \GHto Y^{\delta_n}
\ee for $n=0,1,2,\cdots$ since $\{ j_{k} \}_{k=n}^{\infty}$ is a subsequence of $\{j_{k}(\delta_{n})\}_{k=1}^{\infty}$. 
We may now apply Theorem~\ref{def-glue} to complete the proof.
\end{proof}

\subsection{Constructing Glued Limits with Ricci curvature bounds}

Here we prove that glued limits exist for noncollapsing sequences of manifolds
with nonnegative Ricci curvature and bounded volume which have
control on the intrinsic diameters of their inner regions (defined in
(\ref{intrinsic-distance}):

\begin{thm}\label{glued-Ricci-limits}
Given $m\in \N$, a decreasing sequence, $\delta_i \to 0$, $i=0,1,2,\cdots$, $V>0$, $\theta>0$, 
and $D_i>0$, let $(M_j, g_j)$ be a sequence of $m$ dimensional open Riemannian
manifolds with nonnegative Ricci curvature, $\vol(M)\le V$, such that
\be
sup\Big\{\diam\left(M^{\delta_i}_{j}, {\,d_{M_j^{\delta_i}}}\right):\,\,j \in \N\Big\} < D_{i} \qquad
\forall i \in \N,
\ee
and 
\be \label{V-epsilon}
\forall j\in \N \,\, \exists q_j \in M_j^{\delta_0} \,\,\textrm {such that }
\vol(B_{q_j}(\delta_0)) \ge \theta\delta_0^m.
\ee

Then there exists a subsequence $j_k$ such that for all $\delta_i$
$\{M^{\delta_i}_{j_k}\}$ converge in the Gromov-Hausdorff sense
to a compact metric space $Y^{\delta_i}$.  Thus $M_{j_k}$ have a 
glued-limit space $Y$ such that for all $\delta\in (0,\delta_0]$ 
there exists a further subsequence $\{j_k'\}$ of $\{j_k\}$ for which
$M^{\delta}_{j_k'}$ converges in Gromov-Hausdorf sense to a compact
metric space $Y^\delta$ and for any such $Y^\delta$ there exists an isometric embedding 
\be
F_{\delta}:Y^{\delta} \to Y.
\ee
\end{thm}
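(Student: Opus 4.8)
The plan is to follow the same pattern as the proof of Theorem~\ref{glued-constsec-limits}, with Proposition~\ref{chain-counting} replacing Theorem~\ref{sc-const}. First I would prove that, for each fixed index $i$, the family $\{(\bar M_j^{\delta_i},d_{M_j})\}_{j\in\N}$ is precompact in the Gromov--Hausdorff topology; then I would diagonalize over $i$ to extract a single subsequence $\{j_k\}$ along which $\bar M_{j_k}^{\delta_i}\GHto Y^{\delta_i}$ for every $i$ simultaneously; and finally I would invoke Theorem~\ref{def-glue} (whose hypotheses, via Theorem~\ref{thm-gluing-isoms}, are then satisfied) to produce the glued limit space $Y$ together with the isometric embeddings $F_\delta$.

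The only place where the hypotheses are genuinely used is in verifying that Proposition~\ref{chain-counting} applies at \emph{every} scale $\delta_i$, not merely at $\delta_0$. Fix $i$ and $j$. Since $\{\delta_i\}$ is decreasing with first term $\delta_0$, we have $\delta_i\le\delta_0$, so $M_j^{\delta_0}\subset M_j^{\delta_i}$; in particular the point $q_j$ supplied by (\ref{V-epsilon}) lies in $M_j^{\delta_i}$, and $B_{q_j}(\delta_0)$ avoids $\partial M_j$ because $d_{M_j}(q_j,\partial M_j)>\delta_0$. Applying the Bishop--Gromov Volume Comparison Theorem to this ball with $r=\delta_i$ and $R=\delta_0$ gives
\[
\vol(B_{q_j}(\delta_i))\ \ge\ \Big(\frac{\delta_i}{\delta_0}\Big)^{m}\vol(B_{q_j}(\delta_0))\ \ge\ \theta\,\delta_i^{m},
\]
so the noncollapsing constant $\theta$ is inherited by every scale $\delta_i$. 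Together with $\diam(M_j^{\delta_i},d_{M_j^{\delta_i}})<D_i$ and $\vol(M_j)\le V$, Proposition~\ref{chain-counting} (whose proof uses only balls that avoid the boundary, hence applies verbatim to the open manifolds $M_j$) then covers $M_j^{\delta_i}$ by at most $N(\delta_i,\epsilon,D_i,V,\theta)$ balls of radius $\epsilon$ for every $\epsilon\in(0,\delta_i/2)$, with the bound independent of $j$; equivalently, $\{(M_j^{\delta_i},d_{M_j})\}_{j\in\N}$ is uniformly totally bounded. Since the extrinsic diameter obeys $\diam(\bar M_j^{\delta_i},d_{M_j})\le\diam(M_j^{\delta_i},d_{M_j^{\delta_i}})<D_i$, and since a complete, totally bounded metric space is compact, each $\bar M_j^{\delta_i}$ is a compact metric space lying in one of Gromov's classes $\mathcal M^{D_i,N_i}$ with $N_i$ depending only on $(\delta_i,D_i,V,\theta)$; hence Theorem~\ref{GH-compactness-1} extracts, for this fixed $i$, a Gromov--Hausdorff convergent subsequence.

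With precompactness at each scale in hand, I would diagonalize exactly as in the proof of Theorem~\ref{glued-constsec-limits}: choose a subsequence $\{j_k(\delta_0)\}$ making $\bar M_\bullet^{\delta_0}$ converge, then a further subsequence $\{j_k(\delta_1)\}\subset\{j_k(\delta_0)\}$ for $\delta_1$, and so on; setting $j_k=j_k(\delta_k)$, the tail $\{j_k\}_{k\ge n}$ is a subsequence of $\{j_k(\delta_n)\}$, so $\bar M_{j_k}^{\delta_n}\GHto Y^{\delta_n}$ (a compact metric space) for every $n$. At this point $\{M_{j_k}\}$ together with $\{\delta_i\}$ meets the hypotheses of Theorem~\ref{thm-gluing-isoms}, hence of Theorem~\ref{def-glue}; Theorem~\ref{thm-gluing-isoms} moreover supplies, for each $\delta\in(0,\delta_0]$, a further subsequence $\{j_k'\}\subset\{j_k\}$ along which $\bar M_{j_k'}^{\delta}$ converges to a compact $Y^\delta$, and Theorem~\ref{def-glue} then furnishes the glued limit space $Y$ and the isometric embeddings $F_\delta\colon Y^\delta\to Y$. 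I do not anticipate a serious obstacle; the only substantive ingredient beyond bookkeeping is the Bishop--Gromov propagation of noncollapsing displayed above, which is what lets one re-use Proposition~\ref{chain-counting} at all scales with a single constant $\theta$.
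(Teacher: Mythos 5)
Your proposal is correct and follows essentially the same path as the paper's proof: propagate the noncollapsing constant $\theta$ from scale $\delta_0$ to every scale $\delta_i$ via Bishop--Gromov, extract a Gromov--Hausdorff convergent subsequence at each scale, diagonalize, and invoke Theorems~\ref{thm-gluing-isoms} and~\ref{def-glue}. The only cosmetic difference is that you unwind one layer and cite Proposition~\ref{chain-counting} plus Theorem~\ref{GH-compactness-1} directly, whereas the paper first observes $\{(M_j,g_j)\}\subset\mathcal{M}^{m,\delta_i,D_i,V}_\theta$ and then applies Theorem~\ref{main-thm} (which is itself proved via that same proposition), so the underlying argument is identical.
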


\begin{rmrk}
If there is $D>0$ such that 
\be  
sup_{\delta \in (0,\delta_0]}
\Big\{\diam\big(M^{\delta}_{j}, {d_{M_j^\delta}}\big)\Big\} \leq D
\ee
Then we could take $D_i=D$ for all $i$. But this requirement is
unnecessarily strong.
\end{rmrk}

\begin{rmrk}
The choice of a further subsequence $\{j_k'\}$ of $\{j_k\}$ in 
Theorem~\ref{glued-constsec-limits} is necessary.
For the sequence $(M_j, g_j)$ defined in Example~\ref{ex-no-diag}, consider
a decreasing sequence, $\delta_i \to 0$, $i=0,1,2,\cdots$ such that 
$\delta_0=3\varepsilon$ and $\delta_1=\varepsilon$. Then the hypotheses of the
theorem are satisfied. For all $\delta_i$, $\{M^{\delta_i}_j\}$ converges in Gromov-Hausdorff sense. Now for $2\varepsilon \in (0,\delta_0]$, $\{M^{2\varepsilon}_j\}$ does not have a Gromov-Hausdorff limit.
\end{rmrk}

\begin{proof}
Take $\delta \in (0,\delta_0]$, 
by hypothesis and Bishop-Gromov volume comparison theorem~\ref{BGcomparison}
\be
\vol(B_{q_j}(\delta)) \geq \vol(B_{q_j}(\delta_0))\left( \frac{\delta}{\delta_0} \right)^m \geq \theta\delta^m.
\ee
The above inequality and the hypotheses of the theorem imply that for 
each $i$,
\be
\{(M_j, g_j)\} \subset \mathcal{M}^{m, \delta_i, D_i, V}_{\theta}.
\ee
Start with $\delta_{0}$. By Theorem~\ref{main-thm} there exists a subsequence $\{j_{k}(\delta_{0})\}_{k=1}^{\infty}$ of $\{j\}_{j=1}^{\infty}$ such that 
\be
 \Big(\bar{M}^{\delta_0}_ {{j_{k}(\delta_{0})}},d_{M_ {{j_{k}(\delta_{0})}}}\Big)
\GHto
\Big(Y^{\delta_0}, d_{Y^{\delta_0}}\Big).
\ee 
Proceeding as before for $n \in \N$, there exists a subsequence $\{j_{k}(\delta_{n})\}_{k=1}^{\infty}$ of
$\{j_{k}(\delta_{n-1})\}_{k=1}^{\infty}$
and a compact metric space $Y^{\delta_n}(j_{k}(\delta_{n}))$ such that 
\be  
\Big(\bar{M}_{j_{k}(\delta_{n})}^{\delta_n},d_{M_ {{j_{k}(\delta_{n})}}}\Big) \GHto \Big(Y^{\delta_n}, d_{Y^{\delta_n}}\Big).
\ee
Define $j_{k}=j_{k}(\delta_{k})$.  We have 
\be  
\Big(\bar{M}_{j_{k}}^{\delta_n},d_{M_ {{j_{k}}}}\Big) \GHto 
\Big(Y^{\delta_n}, d_{Y^{\delta_n}}\Big)
\ee
for $n\in \N$ since $\{ j_{k} \}_{k=n}^{\infty}$ is a subsequence of $\{j_{k}(\delta_{n})\}_{k=1}^{\infty}$. Finally, apply Theorem~\ref{def-glue}.
\end{proof}

\section{Properties of Glued Limit Spaces under Curvature Bounds}
\label{sect-prop}

In this final section of the paper we consider the local properties
of the glued limits of sequences of manifolds with constant
sectional curvature as in Theorem~\ref{glued-constsec-limits} 
and manifolds with
nonnegative Ricci curvature as in Theorem~\ref{glued-Ricci-limits}.   
We begin with an example indicating how even when the sequences of
manifolds has a Gromov-Hausdorff limit, one need not retain
curvature conditions on the Gromov-Haudorff limit space [Example~\ref{ex-to-negative}].   This is in sharp contrast 
with the setting where the Riemannian manifolds are compact
without boundary.   In this example, the glued limit space is empty.
Then we have a subsection about balls in
glued limit spaces without any assumption on curvature
[Theorem~\ref{thm-balls-in-limits-1}].  
We apply this control on the balls to prove that local curvature
properties do persist on glued limit space.   In particular we prove
Proposition~\ref{sc-prop} that the glued limits of manifolds with
constant sectional curvature bounds (and other conditions)
are unions of manifolds with constant sectional curvature.    
We close with Theorem~\ref{thm-ricci-glued-lim-measure}
concerning the metric measure properties of glued limits of
manifolds with nonnegative Ricci curvature.

\subsection{An Example with no Curvature Control}

We now construct a sequence of flat open manifolds whose
Gromov-Hausdorff limit is not flat:

\begin{ex}\label{ex-to-negative}
Let $B_p(1)\subset \HH^2$ be a unit ball in hyperbolic space
and $B_0(1) \subset \E^2$ be the unit ball in Euclidean space.
Then $\exp_p: B_0(1) \to B_p(1)$.   Let 
\be
S_j =\{ ( i/j, k/j ): \, i, k \in \Z \, \} \cap B_0(1) \subset \E^2
\ee
and $S'_j = \exp_p(S_j)$.   We can form a graph $A_j$
whose vertices are in $S_j$ and whose edges form a triangulation.
That is we connect $(i/j, k/j)$ to the points $((i+1)/j, k/j)$,
$(i/j, (k+1)/j)$ and $((i+1)/j, (k+1)/j)$.   We let $A'_j=\exp_p(A_j)$
and set the lengths of the edges in $A'_j$ to be the
distances between the vertices viewed as points in $\HH^2$.
Then $A'_j$ converges to $B_p(1)\subset \HH^2$ in the Gromov-Hausdorff
sense.

Now we define $A''_j$ to be the simplicial complexes formed by
filling in the triangles in $A'_j$ with flat Euclidean triangles.
Observe that $A''_j$ converges to $B_p(1)\subset \HH^2$ in the Gromov-Hausdorff sense as well.   Finally, for each $j$ we remove tiny balls
of radius $<< 1/j$ 
around the vertices in $A''_j$, to create a flat open manifold, $M_j$.
These $M_j$ converge in the Gromov-Hausdorff sense to
$B_p(1) \subset \HH^2$.
\end{ex} 

\begin{rmrk}
Example~\ref{ex-to-negative} 
has an empty glued limit space.   In the next subsections
we will see that the glued limit spaces do retain some of the
curvature properties of the initial sequence of manifolds.   Thus
the glued limit space is a more natural object of study than the Gromov-Haudorff limit
even when the Gromov-Haudorff limit exists.
\end{rmrk}

\subsection{Balls to Glued Limit Spaces}

Generally when one wishes to study the properties
of a complete noncompact limit space, one studies balls
in the limit space as Gromov-Hausdorff limits of balls
in the sequence.   Here we cannot control balls in the
limit space, but we can control balls of radius
$\epsilon< \delta_i-\delta_{i+1}$ centered in 
$F_{\delta_i}(Y^{\delta_i})$ 
intersected with $F_{\delta_{i+1}}(Y^{\delta_{i+1}})$.
This will suffice to study the geometric properties of the
glued limit spaces.

\begin{thm}\label{thm-balls-in-limits-1}
Let $Y$ be a glued limit of a sequence of Riemannian manifolds
$M_j$ as in Theorem~\ref{def-glue}.
If $y \in Y^{\delta_i}$ and $\epsilon< \delta_i-\delta_{i+1}$
, then there exists a subsequence $M^{\delta_i}_{j_k}$ 
containing points $y_{j_k}$ and $\epsilon_{j_k} \to \epsilon$ such that 
\be\label{ball-in-good}
B(y_{j_k},\epsilon_{j_k})=\Big\{x\in M_{j_k}: d_M(x, y_{j_k})<\epsilon_{j_k}\Big\} \subset
M_{j_k}^{\delta_{i+1}}.
\ee
and 
\be \label{GH-ball-in-good}
d_{GH}\Big( \left(\bar{B}(y_{j_k}, \epsilon_{j_k}), d_{M_{j_k}}\right), 
\left(\bar{B}(F_{\delta_i}(y),\epsilon)\cap F_{\delta_{i+1}}(Y^{\delta_{i+1}}), d_{Y}\right)\Big)\to 0.
\ee
\end{thm}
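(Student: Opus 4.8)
The plan is to carry out the whole argument inside the \emph{same} common compact metric space that was used in Theorem~\ref{thm-gluing-isoms} to produce the gluing embedding $\varphi_{\delta_{i+1},\delta_i}$, and then to quote Lemma~\ref{ball-in-M-delta} for (\ref{ball-in-good}) and Lemma~\ref{lem-H-balls} for (\ref{GH-ball-in-good}). First I would unpack the construction of $Y$: in Theorem~\ref{thm-gluing-isoms} the embedding $\varphi_{\delta_{i+1},\delta_i}:Y^{\delta_i}\to Y^{\delta_{i+1}}$ — and with it the identity $F_{\delta_i}=F_{\delta_{i+1}}\circ\varphi_{\delta_{i+1},\delta_i}$ used in Theorem~\ref{def-glue} — is obtained from a compact metric space $(Z,d_Z)$ together with isometric embeddings $\varphi_j:(\bar{M}_j^{\delta_{i+1}},d_{M_j})\to Z$ and $\varphi_\infty:(Y^{\delta_{i+1}},d_{Y^{\delta_{i+1}}})\to Z$ with $\varphi_j(\bar{M}_j^{\delta_{i+1}})\Hto\varphi_\infty(Y^{\delta_{i+1}})$ and, along a subsequence I fix and relabel $\{j_k\}$, with $\varphi_{j_k}(\bar{M}_{j_k}^{\delta_i})\Hto\varphi_\infty(\varphi_{\delta_{i+1},\delta_i}(Y^{\delta_i}))$. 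I would write $h_k:=d_H^Z(\varphi_{j_k}(\bar{M}_{j_k}^{\delta_{i+1}}),\varphi_\infty(Y^{\delta_{i+1}}))\to 0$ and set $y_\infty:=\varphi_{\delta_{i+1},\delta_i}(y)\in Y^{\delta_{i+1}}$, so that $F_{\delta_i}(y)=F_{\delta_{i+1}}(y_\infty)$.

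Next I would choose the centers. Since $\varphi_\infty(y_\infty)$ lies in the Hausdorff limit of the sets $\varphi_{j_k}(\bar{M}_{j_k}^{\delta_i})$, pick $\bar{y}_{j_k}\in\bar{M}_{j_k}^{\delta_i}$ with $\varphi_{j_k}(\bar{y}_{j_k})\to\varphi_\infty(y_\infty)$, and then, using density of $M_{j_k}^{\delta_i}$ in its completion together with the $1$-Lipschitz property of $\varphi_{j_k}$, perturb to get $y_{j_k}\in M_{j_k}^{\delta_i}$ with $\varphi_{j_k}(y_{j_k})\to\varphi_\infty(y_\infty)$. Put $\sigma_k:=d_Z(\varphi_{j_k}(y_{j_k}),\varphi_\infty(y_\infty))\to 0$ and $\epsilon_{j_k}:=\epsilon+\sigma_k+h_k\to\epsilon$; after discarding finitely many terms we may assume $\epsilon_{j_k}<\delta_i-\delta_{i+1}$ for all $k$, so Lemma~\ref{ball-in-M-delta} (with $\delta=\delta_i$, $\delta'=\delta_{i+1}$) gives $B(y_{j_k},\epsilon_{j_k})\subset M_{j_k}^{\delta_{i+1}}$, which is (\ref{ball-in-good}). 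The same triangle-inequality bound, namely $d_{M_{j_k}}(x,\partial M_{j_k})\ge\delta_i-\epsilon_{j_k}>\delta_{i+1}$ for every $x\in\bar{M}_{j_k}^{\delta_{i+1}}$ with $d_{M_{j_k}}(x,y_{j_k})\le\epsilon_{j_k}$, identifies the closed metric ball $\bar{B}(y_{j_k},\epsilon_{j_k})=\{x\in M_{j_k}:d_{M_{j_k}}(x,y_{j_k})\le\epsilon_{j_k}\}$ with the closed ball computed inside the compact space $\bar{M}_{j_k}^{\delta_{i+1}}$; in particular it is compact and $\varphi_{j_k}$ maps it isometrically onto $\bar{B}_{\varphi_{j_k}(y_{j_k})}(\epsilon_{j_k})\cap\varphi_{j_k}(\bar{M}_{j_k}^{\delta_{i+1}})$.

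Finally I would pass to the limit. The compact sets $\varphi_{j_k}(\bar{M}_{j_k}^{\delta_{i+1}})\subset Z$ Hausdorff-converge to $\varphi_\infty(Y^{\delta_{i+1}})$ with $\varphi_{j_k}(y_{j_k})\to\varphi_\infty(y_\infty)$, so Lemma~\ref{lem-H-balls} applied with radius $\epsilon$ (and precisely the chosen $\epsilon_{j_k}=\epsilon+\sigma_k+h_k$) yields
\[
d_H^Z\Big(\bar{B}_{\varphi_{j_k}(y_{j_k})}(\epsilon_{j_k})\cap\varphi_{j_k}(\bar{M}_{j_k}^{\delta_{i+1}}),\ \bar{B}_{\varphi_\infty(y_\infty)}(\epsilon)\cap\varphi_\infty(Y^{\delta_{i+1}})\Big)\to 0.
\]
Through the isometric embeddings $\varphi_{j_k}$ and $\varphi_\infty$ and the identification of the previous step, the first set is isometric to $(\bar{B}(y_{j_k},\epsilon_{j_k}),d_{M_{j_k}})$ and the second to $(\{z\in Y^{\delta_{i+1}}:d_{Y^{\delta_{i+1}}}(z,y_\infty)\le\epsilon\},d_{Y^{\delta_{i+1}}})$; and since $F_{\delta_{i+1}}$ is an isometric embedding into $(Y,d_Y)$ with $F_{\delta_i}(y)=F_{\delta_{i+1}}(y_\infty)$, this last space is isometric to $(\bar{B}(F_{\delta_i}(y),\epsilon)\cap F_{\delta_{i+1}}(Y^{\delta_{i+1}}),d_Y)$. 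Combining these isometries with the displayed Hausdorff convergence gives (\ref{GH-ball-in-good}).

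The step I expect to be the main obstacle is the bookkeeping that keeps all the identifications honest: one must run everything inside the very same ambient space $Z$ used in Theorem~\ref{thm-gluing-isoms} to define $\varphi_{\delta_{i+1},\delta_i}$, so that the limit ball is genuinely $\bar{B}(F_{\delta_i}(y),\epsilon)\cap F_{\delta_{i+1}}(Y^{\delta_{i+1}})$ and not merely isometric to some set of that shape. A subsidiary technical nuisance is verifying that the closed metric ball in the incomplete manifold $M_{j_k}$ coincides with the closed ball computed in its compact completion $\bar{M}_{j_k}^{\delta_{i+1}}$, which is exactly what licenses the direct application of Lemma~\ref{lem-H-balls}.
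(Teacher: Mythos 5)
Your proof is correct and takes essentially the same route as the paper: you work inside the common compact space $Z$ furnished by Theorem~\ref{thm-gluing-isoms}, choose centers $y_{j_k}\in M_{j_k}^{\delta_i}$ converging to $\varphi_\infty(\varphi_{\delta_{i+1},\delta_i}(y))$, invoke Lemma~\ref{ball-in-M-delta} for the containment in (\ref{ball-in-good}), and invoke Lemma~\ref{lem-H-balls} for the Hausdorff convergence, then unwind the chain of isometries to reach (\ref{GH-ball-in-good}). If anything you are slightly more explicit than the published argument, pinning down $\epsilon_{j_k}=\epsilon+\sigma_k+h_k$ in accordance with the formula in Lemma~\ref{lem-H-balls} and spelling out why the closed ball in $M_{j_k}$ agrees with the one computed in $\bar{M}_{j_k}^{\delta_{i+1}}$ — details the paper leaves implicit.
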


Note that in Example~\ref{ex-bad-balls} we saw that 
$
B(F_{\delta_i}(y),\epsilon)\cap F_{\delta_{i+1}}(Y^{\delta_{i+1}})
$
need not be isometric to $B(F_{\delta_i}(y),\epsilon)\subset Y$
even when $\epsilon$ is taken arbitrarily small.

\begin{proof}
Recall that $\varphi_{\delta_{i+1},\delta_i}$ was defined in the following way,
see Theorem\ref{thm-gluing-isoms}. 
We picked isometric embeddings 
\be 
\varphi_{j}:M^{\delta_{i+1}}_j\to Z,
\ee 
\be
\varphi_{\infty}:Y^{\delta_{i+1}}\to Z
\ee such that 
\be
d_{H}^{Z}\Big(\varphi_j(M^{\delta_{i+1}}_j),\varphi_\infty(Y^{\delta_{i+1}}_j)\Big)\to 0
\ee
Then we found a subsequence such that
\be
d_{H}^{Z}\Big(\varphi_{j_k}(M^{\delta_i}_{j_k}),X^{\delta_i}\Big)\to 0
\ee
and chose $\varphi_{\delta_{i+1},\delta_i}$ to be an isometry such that
\be
\varphi_{\delta_{i+1},\delta_i}(Y^{\delta_i})=\varphi_{\infty}^{-1}(X^{\delta_i}).
\ee

Then there exist
\be
y_{j_k}\in M_{j_k}^{\delta_i}\subset M_{j_k}^{\delta_{i+1}}\subset M_{j_k}
\ee
such that 
\be  
d_Z\Big(\varphi_{j_k}(y_{j_k}),\varphi_{\infty}(\varphi_{\delta_{i+1},\delta_i}(y)\Big)\to 0
\ee

Let $\epsilon'\in (0, \delta_i-\delta_{i+1})$.
Then by Lemma~\ref{ball-in-M-delta} we have
\be
B(y_{j_k},\epsilon')=\Big\{x\in M_{j_k}: d_M(x, y_{j_k})<\epsilon'\Big\} \subset
M_{j_k}^{\delta_{i+1}}.
\ee
From this and since $\varphi_{j_k}:M^{\delta_{i+1}}_{j_k} \to Z$ is an isometry into its image:
\be
\Big(B(y_{j_k},\epsilon'), d_{M^{\delta_{i+1}}_{j_k}}\Big)
\,\,\textrm{ is isometric to }\,\,
\Big(B(\varphi_{j_k}(y_{j_k}),\epsilon') \cap \varphi_{j_k}(M^{\delta_{i+1}}_{j_k}),d_Z\Big).
\ee
By Lemma~\ref{lem-H-balls}, for any $\epsilon\in (0, \delta_i-\delta_{i+1})$, 
there exists $\epsilon_{j_k}\to \epsilon$ eventually in $(0, \delta_i-\delta_{i+1})$, such that
\be  
\bar{B}\Big(\varphi_{j_k}(y_{j_k}),\epsilon_{j_k}\Big) 
\cap \varphi_{j_k}\Big(M^{\delta_{i+1}}_{j_k}\Big)\,\, \Hto\,\,
\bar{B}\Big(\varphi_{\infty}\varphi_{\delta_{i+1},\delta_i}(y),\epsilon\Big) \cap \varphi_{\infty}\Big(Y^{\delta_{i+1}}\Big).
\ee
Now,
\be
\Big(\bar{B}(\varphi_\infty\varphi_{\delta_{i+1},\delta_i}(y),\epsilon) \cap \varphi_\infty(Y^{\delta_{i+1}}), d_Z\Big)
\ee
 is isometric to 
 \be
\Big(\bar{B}(\varphi_{\delta_{i+1},\delta_i}(y),\epsilon), d_{Y^{\delta_{i+1}}}\Big)
\ee
which is isometric to 
\be
\Big(F_{\delta_{i+1}}\bar{B}(\varphi_{\delta_{i+1},\delta_i}(y),\epsilon), d_{F_{\delta_{i+1}}(Y^{\delta_{i+1}})}\Big)
\ee
which is isometric to 
\be
\Big(\bar{B}(F_{\delta_{i+1}}\varphi_{\delta_{i+1},\delta_i}(y),\epsilon) \cap F_{\delta_{i+1}}Y^{\delta_{i+1}}, d_{Y}\Big).
\ee
Hence
\be
d_{GH}\Big( \Big(\bar{B}(y_{j_k}, \epsilon_{j_k}), d_{M_{j_k}}\Big), 
\Big(\bar{B}(F_{\delta_{i+1}}(\varphi_{\delta_{i+1},\delta_i}(y)),\epsilon)\cap F_{\delta_{i+1}}Y^{\delta_{i+1}}, d_{Y}\Big)\Big)\to 0.
\ee
\end{proof}

\subsection{Properties of Glued Limits of Manifolds with Constant Sectional Curvature} 

Here we prove a proposition, present a key example
and state two open questions concerning the glued limits of
manifolds with constant sectional curvature.

\begin{prop} \label{sc-prop}
Let $Y$ be a glued limit space obtained from a sequence
$M_j \in \mathcal{M}^{m, V, l}_{H}$
as in Theorem~\ref{glued-constsec-limits}.  Then there
exists a countable collection of sets, $W_i \subset Y$, 
each of which is isometric to an $m$ dimensional smooth open manifold
of constant sectional curvature, $H$,
such that 
\be \label{W-exhaust}
Y \subset \bigcup_{i=1}^\infty W_i.
\ee
In fact
\be
F_{\delta_i}(Y^{\delta_i}) \subset W_i \subset F_{\delta_{i+1}}(Y^{\delta_{i+1}})\subset Y.
\ee
\end{prop}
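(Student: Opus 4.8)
The plan is to combine the ball-control result of Theorem~\ref{thm-balls-in-limits-1} with the local structure theory developed in the proof of Theorem~\ref{sc-const}. Fix $i$ and set $\epsilon_i = (\delta_i - \delta_{i+1})/3$ (anything strictly less than $\delta_i - \delta_{i+1}$ works, but shrinking it a little gives room for the triangle-inequality estimates). For each $y \in Y^{\delta_i}$, Theorem~\ref{thm-balls-in-limits-1} produces, after passing to a subsequence depending on $y$, points $y_{j_k} \in M_{j_k}^{\delta_i}$ and radii $\epsilon_{j_k} \to \epsilon_i$ with $B(y_{j_k}, \epsilon_{j_k}) \subset M_{j_k}^{\delta_{i+1}}$ and
\be
d_{GH}\Big( \big(\bar B(y_{j_k},\epsilon_{j_k}), d_{M_{j_k}}\big),\ \big(\bar B(F_{\delta_i}(y),\epsilon_i)\cap F_{\delta_{i+1}}(Y^{\delta_{i+1}}), d_Y\big) \Big) \to 0.
\ee
Now I would invoke the key geometric fact from the proof of Theorem~\ref{sc-const}: since $\epsilon_i < \delta_i$ and $M_{j_k}\in\mathcal{M}^{m,V,l}_H$, the ball $B(y_{j_k},\epsilon_i)$ hits neither the boundary of $M_{j_k}$ nor any conjugate or cut point of $y_{j_k}$, so by (\ref{is-a-const-ball}) there is a Riemannian isometric diffeomorphism onto a metric ball of radius $\epsilon_i$ in the simply connected space form $M_H^m$ — in particular $\bar B(y_{j_k},\epsilon_{j_k})$, with its restricted metric $d_{M_{j_k}}$, is isometric to a closed metric ball in $M_H^m$ (the restricted and intrinsic metrics agree on such a ball). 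The Gromov–Hausdorff limit of closed metric balls of radii $\epsilon_{j_k}\to\epsilon_i$ in the fixed space $M_H^m$ is the closed ball of radius $\epsilon_i$ in $M_H^m$; hence $\bar B(F_{\delta_i}(y),\epsilon_i)\cap F_{\delta_{i+1}}(Y^{\delta_{i+1}})$ is isometric to a metric ball in $M_H^m$, and therefore the open set $B(F_{\delta_i}(y),\epsilon_i)\cap F_{\delta_{i+1}}(Y^{\delta_{i+1}})$, as an open subset of $Y$ with the length metric it inherits, is a smooth $m$-dimensional Riemannian manifold of constant sectional curvature $H$.

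Next I would assemble these local pieces. Define
\be
W_i = \bigcup_{y \in Y^{\delta_i}} \Big( B\big(F_{\delta_i}(y),\epsilon_i\big) \cap F_{\delta_{i+1}}(Y^{\delta_{i+1}}) \Big) \ \subset\ F_{\delta_{i+1}}(Y^{\delta_{i+1}}) \subset Y.
\ee
Each summand is an open constant-curvature-$H$ Riemannian ball, and the transition between two overlapping such balls is an isometry of open subsets of $M_H^m$, so the standard patching argument (as in any construction of a $(G,X)$-structure) shows $W_i$ carries a well-defined smooth Riemannian metric of constant sectional curvature $H$ whose length metric agrees with $d_Y$ restricted to $W_i$; one should check $W_i$ is second countable and Hausdorff, which follows since $W_i \subset Y$ and $Y$ is separable (it is covered by the compact spaces $F_{\delta_i}(Y^{\delta_i})$). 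Moreover $F_{\delta_i}(Y^{\delta_i}) \subset W_i$: any $F_{\delta_i}(y)$ lies in the ball $B(F_{\delta_i}(y),\epsilon_i)$, which is contained in $F_{\delta_{i+1}}(Y^{\delta_{i+1}})$ because the corresponding balls $B(y_{j_k},\epsilon_{j_k})$ lie in $M_{j_k}^{\delta_{i+1}}$ (Lemma~\ref{ball-in-M-delta}) — this is exactly the chain $F_{\delta_i}(Y^{\delta_i}) \subset W_i \subset F_{\delta_{i+1}}(Y^{\delta_{i+1}})$ asserted in the proposition. Finally, (\ref{W-exhaust}) follows from (\ref{eqn-onto-Y-d}) with $\beta_j = \delta_j$: $Y = \bigcup_i F_{\delta_i}(Y^{\delta_i}) \subset \bigcup_i W_i$.

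The main obstacle I anticipate is the patching step: verifying that the locally-defined constant-curvature metrics on the individual balls $B(F_{\delta_i}(y),\epsilon_i)\cap F_{\delta_{i+1}}(Y^{\delta_{i+1}})$ glue to a genuine smooth Riemannian manifold structure on $W_i$ — i.e.\ that overlaps are open and the transition maps are isometries of space-form balls, and that no pathology (failure of Hausdorffness, branching) occurs where many such balls meet. The cleanest route is to observe that $W_i \subset F_{\delta_{i+1}}(Y^{\delta_{i+1}})$, which sits inside the glued limit space $Y$ as a metric space, so $W_i$ inherits a Hausdorff separable topology for free, and then each point of $W_i$ has a neighborhood isometric to a space-form ball, which is precisely the definition of a constant-curvature Riemannian manifold once one knows the metric is locally length-minimizing (true because $\epsilon_i < \delta_i - \delta_{i+1}$ keeps everything inside a region where $d_Y$ is realized by paths staying in $F_{\delta_{i+1}}(Y^{\delta_{i+1}})$, via (\ref{GH-ball-in-good})). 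A secondary bookkeeping point is that the subsequence in Theorem~\ref{thm-balls-in-limits-1} depends on $y$; but since constant curvature $H$ is a pointwise local conclusion, one never needs a single subsequence working for all $y$ at once, so this causes no trouble.
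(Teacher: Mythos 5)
Your proposal is the same approach as the paper's: your
\[
W_i \;=\; \bigcup_{y \in Y^{\delta_i}} \Bigl( B\bigl(F_{\delta_i}(y),\epsilon_i\bigr) \cap F_{\delta_{i+1}}(Y^{\delta_{i+1}}) \Bigr)
\;=\; T_{\epsilon_i}\bigl(F_{\delta_i}(Y^{\delta_i})\bigr) \cap F_{\delta_{i+1}}(Y^{\delta_{i+1}})
\]
is exactly the set used in the paper, and the engine of the argument --- Theorem~\ref{thm-balls-in-limits-1} plus the constant-curvature-ball fact (\ref{is-a-const-ball}) --- is the same. Your discussion of patching and of second countability/Hausdorffness is fine but essentially automatic once one knows each point of $W_i$ has a neighborhood isometric to a space-form ball; the paper does not dwell on it.

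However, there is a genuine gap in the choice of $\epsilon_i$. You take $\epsilon_i = (\delta_i - \delta_{i+1})/3$ and assert that ``anything strictly less than $\delta_i - \delta_{i+1}$ works,'' and later that ``since $\epsilon_i < \delta_i$ and $M_{j_k}\in\mathcal{M}^{m,V,l}_H$, the ball $B(y_{j_k},\epsilon_i)$ hits neither the boundary of $M_{j_k}$ nor any conjugate or cut point of $y_{j_k}$.'' That implication is false. Avoiding the boundary only needs $\epsilon_i < \delta_{i+1}$ (or $< \delta_i - \delta_{i+1}$ together with $B(y_{j_k},\epsilon_i)\subset M_{j_k}^{\delta_{i+1}}$ via Lemma~\ref{ball-in-M-delta}), but that alone gives no control whatsoever on cut or conjugate points. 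The Klingenberg-type argument in the proof of Theorem~\ref{sc-const} rules out cut points in $B(p,\epsilon)$ by producing a closed geodesic of length $< 2\epsilon$, and this only contradicts $L_{\min}(M) > l$ when $2\epsilon \le l$; likewise the absence of conjugate points in $B(p,\epsilon)$, and the geodesic convexity of the corresponding ball in $M_H^m$ (which is precisely what you invoke when you say ``the restricted and intrinsic metrics agree on such a ball''), require $\epsilon$ to be smaller than $\pi/\sqrt{H}$ (resp.\ $\pi/(2\sqrt{H})$) when $H>0$. Since $\delta_0$ can be arbitrary, $(\delta_i-\delta_{i+1})/3$ can exceed both $l/4$ and $\pi/(2\sqrt{H})$, and then the isometry (\ref{is-a-const-ball}) you want is simply unavailable. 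The fix is the paper's: take $\epsilon_i = \min\{\delta_i - \delta_{i+1},\, l/2,\, \pi\sqrt{h}/2\}/2$ with $h=H$ for $H>0$ and $h=(l/\pi)^2$ otherwise. With that correction, the rest of your argument goes through as written.
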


See Example~\ref{sc-glued-lim-planes} in which the glued limit space
is a countable collection of flat tori which are not connected
to one another but have a metric restricted from a larger compact
metric space of finite volume.

\begin{proof}
Recall that any glued limit space, $Y$, defined as in Theorem~\ref{def-glue}
depends on a sequence $\delta_i\to 0$ and gluings
$\varphi_{\delta_{i+1}, \delta_i}: Y^{\delta_{i+1}}\to Y^{\delta_i}$
via the subsequential
limit isometric embeddings of (\ref{subseqlim}).
There are isometric
embeddings $F_{\delta_{i}}:Y^{\delta_{i}}\to Y$ such that
\be \label{F-exhaust}
Y \subset \bigcup_{i=1}^\infty F_{\delta_{i}}(Y^{\delta_{i}})
\ee
and 
\be \label{F-inside}
F_{\delta_i}(Y^{\delta_i}) \subset  F_{\delta_{i+1}}(Y^{\delta_{i+1}}).
\ee
Let 
\be
\epsilon_i=\min\left\{\delta_i-\delta_{i+1}, l/2, \pi \sqrt{h}/2\right\}/2
\ee
where $h=H$ when $H>0$ and $h=(l/\pi)^2$ otherwise.

Let
\be
W_i=T_{\epsilon_i}\Big(F_{\delta_i}\big(Y^{\delta_i}\big)\Big)\cap F_{\delta_{i+1}}\big(Y^{\delta_{i+1}}\big)
 \subset Y
\ee 

First observe that by (\ref{F-inside}) we have 
\be
F_{\delta_i}(Y^{\delta_i}) \subset W_i.
\ee
So combined with (\ref{F-exhaust}), we have (\ref{W-exhaust}).
So we need only show $W_i$ is a smooth $m$ dimensional
open manifold of constant sectional curvature, $H$.

For all $w\in W_i$, there exists $y_\infty \in F_{\delta_i}(Y^{\delta_i})$
such that $w\in B_{y_\infty}(\epsilon_i)\subset Y$.   
Since 
\be
B_{y_\infty}(\epsilon_i)\subset T_{\epsilon_i}\Big(F_{\delta_i}\big(Y^{\delta_i}\big)\Big)
\ee
we have
\be \label{need-U}
U= B_{y_\infty}(\epsilon_i) \cap F_{\delta_{i+1}}(Y^{\delta_{i+1}})
=B_{y_\infty}(\epsilon_i) \cap W_i.
\ee
We need only show that $U$
is isometric to a ball of radius $\epsilon_i$ in 
$M_H^m$, the $m$ dimensional simply
connected manifold with constant sectional curvature $H$.

There exists $y\in Y^{\delta_i}$ such that
$y_\infty =F_{\delta_i}(y)$.
By Theorem~\ref{thm-balls-in-limits-1}, and the fact that
$\epsilon_i< \delta_i-\delta_{i+1}$, there 
exists a subsequence $M^{\delta_i}_{j_k}$ 
containing points $y_{j_k}$ and $\epsilon_{j_k} \to \epsilon_i$ such that 
(\ref{ball-in-good}) and (\ref{GH-ball-in-good}) are satisfied.     

Since $\epsilon_i<l/2$, then for $k$ sufficiently large
$\epsilon_{j_k}<l/2$ and so by (\ref{ball-in-good}) and
$M_j$ satisfy the conditions of Theorem~\ref{sc-const},
by (\ref{is-a-const-ball})
we know
there is an Riemannian isometric diffeomorphism from
$B(y_{j_k},\epsilon_{j_k})$ to a ball in $M_H^m$, the $m$ dimensional simply
connected manifold with constant sectional curvature $H$. 
Since $\epsilon_i < \sqrt{H}\pi/2$ when $H>0$, we have
a convex ball, so that, as metric spaces, 
\be
\big(B(y_{j_k},\epsilon_{j_k}), d_M\big)
\textrm{ is isometric to }
\big(B(p,\epsilon_{j_k}), d_{M_H^m}\big).
\ee  

Taking
$k\to \infty$, the closure of these latter balls converge in the Gromov-Hausdorff
sense to $(\bar{B}(p,\epsilon_i), d_{M_H^m})$.   Thus by
(\ref{GH-ball-in-good}) and the uniqueness of Gromov-Hausdorff limits,
\be \label{sc-GH-ball-in-good}
\big(\bar{B}(y_\infty,\epsilon_i)\cap F_{\delta_{i+1}}(Y^{\delta_{i+1}}), d_Y\big)
\textrm{ is isometric to }
\big(\bar{B}(p,\epsilon_i), d_{M_H^m}\big).
\ee
Thus we have (\ref{need-U}) and we are done.
\end{proof}

\begin{ex}\label{sc-glued-lim-planes}
In this example we construct a glued limit space, $Y$, for 
a sequence of manifolds, $M_j^m$, satisfying the conditions of 
Theorem~\ref{sc-const}.  In addition the $M_j^m$ converge
in the GH sense to a metric space $X$, so that the glued limit
space is unique.  The glued limit $Y$ is a countable union 
of connected flat manifolds with the restricted metric from $X$.
\end{ex}

\begin{proof}
Let $M_1$ be two flat square annuli connected by a slanted strip of 
width $1$ and
length $\sqrt{2}$:
\be
M_1=C_{0,1} \cup C_{1,1} \cup S_{0,1} \subset \R^3
\ee
where
\begin{eqnarray}
C_{0,1}&=& 
\Big( \big((-1,1)\times (-1,1)\big) \setminus 
\big((-1/2, 1/2) \times [-1/2,1/2] \big) \Big)
\times\{0\}\\
C_{1,1} &=& 
\Big( \big( (-1,1)\times (-1,1) \big) \setminus 
\big((-1/2, 1/2) \times [-1/2,1/2] \big) \Big)
\times\{1\}\\
S_{0,1} &=& \Big\{(x,y,z): \,\, (x,y)\in (-1/2, 1/2)\times [-1/2, 1/2], 
\,\, z=x+1/2 \Big\}.
\end{eqnarray}
Endowed with the length metric, 
this is isometric to an open manifold with constant sectional curvature 0.
Note that for $\delta>1/4$, 
\be
M_1^\delta\subset C_{0,1}\cup C_{1,1}.
\ee

Let $M_2$ be three flat square annuli of total area $\le 4+4+ 4(1/4)$
connected by two slanted strips of width $1/2$:
\be
M_2=C_{0,2} \cup C_{1,2}\cup C_{2,2} \cup S_{1,2}\cup S_{2,2} \subset \R^3
\ee
where
\begin{eqnarray}
C_{0,2}&=& 
\Big( \big((-1,1)\times (-1,1)\big) \setminus 
\big((-1/4, 1/4) \times [-1/4,1/4] \big) \Big)
\times\{0\}\\
C_{1,2} &=& 
\Big( \big( (-1/2,1/2)\times (-1/2,1/2) \big) \setminus 
\big((-1/4, 1/4) \times [-1/4,1/4] \big) \Big)
\times\{1/2\}\\
C_{2,2} &=& 
\Big( \big( (-1,1)\times (-1,1) \big) \setminus 
\big((-1/4, 1/4) \times [-1/4,1/4] \big) \Big)
\times\{2/2\}\\
S_{0,2} &=& \Big\{(x,y,z): \,\, (x,y)\in (-1/4, 1/4)\times [-1/4, 1/4] 
\,\, z=x+1/4 \Big\}\\
S_{1,2} &=& \Big\{(x,y,z): \,\, (x,y)\in (-1/4, 1/4)\times [-1/4, 1/4] 
\,\, z=x+3/4 \Big\}.
\end{eqnarray}
Endowed with the length metric, 
this is isometric to an open manifold with constant sectional curvature 0.
Note that for $\delta>1/8$, 
\be
M_1^\delta\subset C_{0,2}\cup C_{1,2}\cup C_{2,2} \setminus
\big( B((0,0), \delta) \times [0,1] \big).
\ee

Let $M_j$ be $(j+1)$ flat square annuli of total area $\le 4+4\sum_{i=0}^j(1/2)^j$
connected by j slanted strips of width $(1/2^j)$:
\be
M_j=\bigcup_{i=0}^j C_{i,j} \cup \bigcup_{i=0}^{j-1} S_{i,j}\subset \R^3
\ee
where
\begin{eqnarray*}
C_{0,j}&=& 
\Big( \big((-1,1)\times (-1,1)\big) \setminus 
\big((-1/2^{j+1}, 1/2^{j+1}) \times [-1/2^{j+1},1/2^{j+1}] \big) \Big)
\times\{0\}\\
C_{i,j} &=& 
\Big( \big( (-2^{i-j},2^{i-j})\times (-2^{i-j},2^{i-j}) \big) \setminus 
\big((-1/2^{j+1}, 1/2^{j+1}) \times [-1/2^{j+1},1/2^{j+1}] \big) \Big)
\times\{2^{i-j}\}\\
C_{j,j} &=& 
\Big( \big( (-1,1)\times (-1,1) \big) \setminus 
\big((-1/2^{j+1}, 1/2^{j+1}) \times [-1/2^{j+1},1/2^{j+1}] \big) \Big)
\times\{2^{j-j}\}\\
S_{0,j} &=& \Big\{(x,y,z): \,\, (x,y)\in (-1/2^{j+1}, 1/2^{j+1})\times [-1/2^{j+1}, 1/2^{j+1}] \,\, z=x+1/2^{j+1} \Big\}\\
S_{i,j} &=& \Big\{(x,y,z): \,\, (x,y)\in (-1/2^{j+1}, 1/2^{j+1})\times [-1/2^{j+1}, 1/2^{j+1}] \,\, z= m_j(x+1/2^{j+1}) +2^{i-j} \Big\}\\
\end{eqnarray*}
where
\be
m_j=\frac{2^{i+1-j}-2^{i-j}}{(1/2^{j+1}) - (-1/2^{j+1})}.
\ee 
Endowed with the length metric, 
this is isometric to an open manifold with constant sectional curvature 0.
Note that for $\delta>1/2^{j+2}$, 
\be
M_j^\delta\subset C_{0,j}\cup \cdots \cup C_{j,j} \setminus
\big( B((0,0), \delta) \times [0,1] \big).
\ee
The Gromov-Hausdorff limit of the $M_j$ exists and can be see to
be
\be
X= \bigcup_{j=0}^\infty C_j \cup S_0 \subset \R^3
\ee
where
\begin{eqnarray}
C_{0}&=& 
\Big( \big((-1,1)\times (-1,1)\big) \Big)
\times\{0\}\\
C_{i} &=& 
\Big( \big( (-2^{i-j},2^{i-j})\times (-2^{i-j},2^{i-j}) \big) \Big)
\times\{2^{i-j}\}\\
S_{0} &=& \Big\{(0,0,z): \,\, z\in [0,1]\Big\}
\end{eqnarray}
endowed with the length metric.
The Gromov-Hausdorff limit, $Y^\delta$ of the $M_j^\delta$ exists and
\be
Y^\delta \subset  X \setminus
\big( B((0,0), \delta) \times [0,1] \big).
\ee
In fact $Y= X \setminus S_0$.
\end{proof}

\begin{question}\label{q-sc-manifold}
Are the glued limits of sequences of manifolds with constant
sectional curvature open manifolds with constant sectional
curvature?   We know they need not be connected by
Example~\ref{sc-glued-lim-planes}.
\end{question}

\begin{question} \label{q-sc-unique}
Are the glued limits of sequences of manifolds with constant
sectional curvature unique?   Perhaps an adaption of
Example~\ref{sc-glued-lim-planes} could be applied to show that
they are not.   
\end{question}

\subsection{Properties of Glued Limits of Manifolds with Nonnegative Ricci Curvature}

We now prove the final theorem of our paper
and state the last two open questions:

\begin{thm}\label{thm-ricci-glued-lim-measure}
Suppose we have a sequence of $m$ dimensional
open Riemannian manifolds
$M_j$ with nonnegative Ricci curvature and $\vol(M_j)\le V_0$
and that there exists a
sequence $\delta_i \to 0$, such that the inner regions, 
$M_j^{\delta_i}$, converge in the Gromov-Hausdorff sense
as $j\to \infty$ to $Y^{\delta_i}$ without collapsing.   Suppose that $Y$ is
a glued limit constructed as in Theorem~\ref{def-glue}.
Then $Y$ has Hausdorff dimension $m$, $\mathcal{H}^m(Y)\le V_0$
and its Hausdorff measure has positive lower density everywhere.
\end{thm}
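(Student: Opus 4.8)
The plan is to reduce everything to local statements about the balls $W_i$ provided by Theorem~\ref{thm-balls-in-limits-1} and then invoke the volume convergence theorems. First I would fix $y\in Y$. By Theorem~\ref{def-glue}, $Y=\bigcup_i F_{\delta_i}(Y^{\delta_i})$ with $F_{\delta_i}(Y^{\delta_i})\subset F_{\delta_{i+1}}(Y^{\delta_{i+1}})$, so there is some $i$ with $y\in F_{\delta_i}(Y^{\delta_i})$; write $y=F_{\delta_i}(\tilde y)$ with $\tilde y\in Y^{\delta_i}$. Pick any $\epsilon<\delta_i-\delta_{i+1}$. Theorem~\ref{thm-balls-in-limits-1} then gives a subsequence $M^{\delta_i}_{j_k}$, points $y_{j_k}$, and radii $\epsilon_{j_k}\to\epsilon$ with $B(y_{j_k},\epsilon_{j_k})\subset M^{\delta_{i+1}}_{j_k}$ (so these balls do not touch $\partial M_{j_k}$) and
\be
d_{GH}\Big(\big(\bar B(y_{j_k},\epsilon_{j_k}),d_{M_{j_k}}\big),
\big(\bar B(y,\epsilon)\cap F_{\delta_{i+1}}(Y^{\delta_{i+1}}),d_Y\big)\Big)\to 0.
\ee
This is the crucial input: it realizes a neighborhood of $y$ inside $Y$ (intersected with the next piece) as a Gromov-Hausdorff limit of genuine Riemannian balls of nonnegative Ricci curvature which avoid the boundary.

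Next I would apply the noncollapsing hypothesis together with Bishop--Gromov. Since the sequence $M_j^{\delta_i}\GHto Y^{\delta_i}$ does not collapse, there is a uniform lower bound $\vol(B_{q_{j_k}}(\delta_i))\ge \theta_i\delta_i^m$ for some base point in $M_{j_k}^{\delta_i}$; arguing as in Proposition~\ref{chain-counting} (a chain of overlapping balls along a minimizing curve in $M^{\delta_i}_{j_k}$, using the uniform intrinsic-diameter bound that is implicit once $M^{\delta_i}_{j_k}$ converges), one gets $\vol(B(y_{j_k},\epsilon_{j_k}))\ge c(\epsilon_i,\ldots)>0$ uniformly in $k$. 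Hence the limit ball is noncollapsed, and by the Cheeger--Colding volume convergence theorem in its local form (Theorem~\ref{thm-chco}, valid here by Remark~\ref{rmrk-chco} since the balls avoid $\partial M_{j_k}$),
\be
\mathcal H^m\big(\bar B(y,\epsilon)\cap F_{\delta_{i+1}}(Y^{\delta_{i+1}})\big)
=\lim_{k\to\infty}\vol\big(B(y_{j_k},\epsilon_{j_k})\big)\in[c,\ V_0].
\ee
Bishop--Gromov also gives $\vol(B(y_{j_k},r))\ge (r/\epsilon_{j_k})^m\vol(B(y_{j_k},\epsilon_{j_k}))$ for $r<\epsilon_{j_k}$, which passes to the limit and yields a positive lower bound on $\mathcal H^m$-density at $y$: the density at $y$ is at least $c/(\omega_m\epsilon^m)>0$. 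Since $\bar B(y,\epsilon)\cap F_{\delta_{i+1}}(Y^{\delta_{i+1}})$ contains a $d_Y$-neighborhood of $y$ in $Y$ of some positive radius (any point of $Y$ within $d_Y$-distance $<\epsilon$ of $y$ that lies in $F_{\delta_{i+1}}(Y^{\delta_{i+1}})$ is included, and $F_{\delta_i}(Y^{\delta_i})\subset F_{\delta_{i+1}}(Y^{\delta_{i+1}})$ is a neighborhood of $y$ in that piece), this density statement is about $Y$ itself near $y$. In particular every point of $Y$ has positive upper and lower $m$-density, which forces $Y$ to have Hausdorff dimension exactly $m$.

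Finally, for the global bound $\mathcal H^m(Y)\le V_0$: cover $Y$ by the countably many sets $F_{\delta_i}(Y^{\delta_i})$, and for each $i$ cover $F_{\delta_i}(Y^{\delta_i})$ by finitely many balls $\bar B(y,\epsilon)\cap F_{\delta_{i+1}}(Y^{\delta_{i+1}})$ as above. A more careful accounting shows $\mathcal H^m(F_{\delta_i}(Y^{\delta_i}))=\lim_k\vol(M^{\delta_i}_{j_k})\le V_0$ by writing $F_{\delta_i}(Y^{\delta_i})$ as a GH-limit of $\bar M^{\delta_i}_{j_k}$ (these have a common diameter bound, are noncollapsed, and avoid the boundary, so Theorem~\ref{thm-chco} applies to the whole space, not just a ball, after the usual reduction to countably many small balls); then monotonicity $F_{\delta_i}(Y^{\delta_i})\subset F_{\delta_{i+1}}(Y^{\delta_{i+1}})$ and $\vol(M^{\delta_i}_{j})\le\vol(M_j)\le V_0$ give $\mathcal H^m(Y)=\sup_i\mathcal H^m(F_{\delta_i}(Y^{\delta_i}))\le V_0$. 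The main obstacle I anticipate is the last point: making the passage from the local volume convergence on balls (Theorem~\ref{thm-balls-in-limits-1} only controls balls intersected with the \emph{next} piece $F_{\delta_{i+1}}(Y^{\delta_{i+1}})$, not balls in $Y$ — recall Example~\ref{ex-bad-balls}) into a clean computation of $\mathcal H^m$ of the whole piece $F_{\delta_i}(Y^{\delta_i})$, and checking that these local Hausdorff-measure estimates sum/patch correctly without double-counting along the nested pieces. Getting the uniform lower volume bound on $B(y_{j_k},\epsilon_{j_k})$ — i.e. propagating the noncollapsing from the base point $q_{j_k}$ to an arbitrary $y_{j_k}\in M^{\delta_i}_{j_k}$ via the intrinsic-diameter bound — is the other technical point, but that is essentially Proposition~\ref{chain-counting} again.
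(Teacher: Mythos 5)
Your overall framework matches the paper's: you use Theorem~\ref{thm-balls-in-limits-1} to realize small neighborhoods in $Y$ as Gromov-Hausdorff limits of genuine Riemannian balls in $M_{j_k}$ that avoid $\partial M_{j_k}$, then apply the local Cheeger--Colding volume convergence (Theorem~\ref{thm-chco} via Remark~\ref{rmrk-chco}) and Bishop--Gromov to get positive $m$-density and the dimension statement. That part is essentially the same as the paper's proof, and the extra observation that Proposition~\ref{chain-counting} propagates the single-point noncollapsing to arbitrary centers $y_{j_k}$ is a nice way of making the hypothesis ``without collapsing'' usable pointwise, which the paper glosses over.

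Where you diverge is the global bound $\mathcal{H}^m(Y)\le V_0$, and this is where there is a real gap. You propose to show $\mathcal H^m(F_{\delta_i}(Y^{\delta_i}))=\lim_k\vol(M^{\delta_i}_{j_k})$ by ``applying Theorem~\ref{thm-chco} to the whole space.'' This does not go through as stated: $\bar M^{\delta_i}_{j_k}$ with the restricted metric $d_{M_{j_k}}$ has its own boundary $\partial M^{\delta_i}_{j_k}$, and a small $d_{M_{j_k}}$-ball about a point near that boundary spills into $M_{j_k}\setminus M^{\delta_i}_{j_k}$. The Cheeger--Colding theorem controls the Riemannian volume of the full ball $B_{p_j}(r)\subset M_{j_k}$, not the volume of $B_{p_j}(r)\cap M^{\delta_i}_{j_k}$, and these differ exactly at the points you need for a cover of the whole inner region. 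So the ``usual reduction to countably many small balls'' only gives an inequality, and extracting the sharp constant requires a disjointness argument on the manifold side. The paper sidesteps this entirely: it works with the sets $W_i=T_{r_i}(F_{\delta_i}(Y^{\delta_i}))\cap F_{\delta_{i+1}}(Y^{\delta_{i+1}})$, where every point is the center of a ball $B_w(r_i)\cap W_i$ that is a GH-limit of a clean Riemannian ball in $M_{j_k}$; it then takes a maximal collection of disjoint $r_i/2$-balls, observes those correspond to disjoint balls in $M_{j_k}$ of total volume $\le V_0$, and bounds $\mathcal{H}^m(W_i)$ by a Vitali-type covering argument. Since $F_{\delta_i}(Y^{\delta_i})\subset W_i\subset W_{i+1}$ and $Y=\bigcup_i W_i$ increasingly, this gives the global bound without ever claiming exact volume convergence for the whole inner region. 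You correctly anticipated that this was the main obstacle; the fix is not a ``more careful accounting'' of your direct equality but a switch to the paper's $W_i$-covering strategy, which keeps every Cheeger--Colding application on genuine Riemannian balls.
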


Note that this theorem may be applied to study the glued limits of
sequences of manifolds satisfying the conditions of 
Theorem~\ref{glued-Ricci-limits}.   

To prove this theorem we will apply Cheeger-Colding's Volume Convergence
Theorem \cite{ChCo-PartI}\cite{Colding-volume} which was reviewed in Subsection~\ref{subsect-vol-conv}.   See Theorem~\ref{thm-chco}
and Remark~\ref{rmrk-chco} for the precise statement we will use here.

\begin{proof}
First we prove that 
\be
W_i\,\,=\,\,
T_{(\delta_i-\delta_{i+1})/2}\left(F_{\delta_i}\left(Y^{\delta_i}\right)\right)
\,\,\cap\,\, F_{\delta_{i+1}}\left(Y^{\delta_{i+1}}\right)\,\,\subset\,\, Y.
\ee 
have Hausdorff dimension $m$ and have
 doubling Hasudorff measures.   For any $w\in W_i$,
 let
 \be
 U_w=B(w,(\delta_i-\delta_{i+1})/2) \cap W_i.
 \ee
 We can find $y \in Y^{\delta_i}$ 
 such that $d_Y(y, w) <(\delta_i-\delta_{i+1})/2$.
Then we have
\be\label{U-w}
U_w=
B\left(w,(\delta_i-\delta_{i+1})/2\right) 
\cap B\left( F_{\delta_i}(y), \delta_i-\delta_{i+1}\right)
\cap F_{\delta_{i+1}}(Y^{\delta_{i+1}})
\ee
By Theorem~\ref{thm-balls-in-limits-1}, we have
a subsequence $j_k$, points $y_{j_k}\in M_{j_k}^{\delta_i}$
and $\epsilon_{j_k}\to \epsilon=(\delta_i-\delta_{i+1})/2$
satisfying 
(\ref{ball-in-good}) and (\ref{GH-ball-in-good}):
\be 
d_{GH}\left( \left(\bar{B}(y_{j_k}, \epsilon_{j_k}), d_{M_{j_k}}\right), 
\left(\bar{B}(F_{\delta_i}(y),\epsilon)\cap F_{\delta_{i+1}}(Y^{\delta_{i+1}}), d_{Y}\right)\right)\to 0.
\ee
Combining this with the fact that
\be
w\in B(y, (\delta_{i}-\delta_{i+1})/2)\subset 
\bar{B}(F_{\delta_i}(y),\epsilon)\cap F_{\delta_{i+1}}(Y^{\delta_{i+1}})\subset
Y
\ee
there exists
\be
z_{j,k} \in \bar{B}(y_{j_k}, (\delta_{i}-\delta_{i+1})/2) \subset 
\bar{B}(y_{j_k}, \epsilon_{j_k})\subset
M_{j_k}
\ee
such that
\be \label{GH-ball-in-good-8}
d_{GH}\Big( \left(\bar{B}(z_{j_k}, (\delta_{i}-\delta_{i+1})/2 ), d_{M_{j_k}}\right), 
\left(\bar{U}_w, d_{Y}\right) \Big) \to 0.
\ee
Since we assumed this is noncollapsing, then by the Cheeger-Colding
Volume Convergence Theorem mentioned above we have
\be\label{colding-volume}
\mathcal{H}_m( B_w(r) \cap U_w)=\lim_{k\to \infty} \mathcal{H}_m(B_{z_{j_k}}(r))
\ee
for all 
$
r\le r_i=(\delta_i-\delta_{i+1})/2.
$
By (\ref{U-w})  and Bishop's Volume Comparison Theorem,
we see that
\be \label{HmWir}
\mathcal{H}_m( B_w(r) \cap W_i)=
\mathcal{H}_m( B_w(r) \cap U_w) \le \omega_m r^m\qquad \forall r\le r_i
\ee
is positive and finite for any $w\in W_i$.   By Bishop-Gromov's
Volume Comparison Theorem,
\be\label{Bishop-Gromov}
\frac{\mathcal{H}_m( B_w(r_1) \cap W_i)}{\mathcal{H}_m( B_w(r_2)\cap W_i)}
\ge \frac{r_1^m}{r_2^m} \qquad \forall \,\,w\in W_i,\,\, r_1<r_2\le r_i.
\ee
Since $W_i$ is a subset of the compact 
$F_{\delta_{i+1}}(Y^{\delta_{i+1}})$ it is precompact.
Let $w_1,...w_N\subset W_i$ be a maximal collection 
such that $B(w_i, r_i/2)$ are disjoint.   Then
\be
W_i \subset \bigcup_{n=1}^N B(w_n, r_i)
\ee
and
\be
\mathcal{H}^m(W_i) \le \sum_{n=1}^N \mathcal{H}^m(B(w_n, r_i)
\le   (1/4)^m \sum_{n=1}^N \mathcal{H}^m(B(w_n, r_i/4)).
\ee
But it is not hard to see examining (\ref{ball-in-good}) 
that $B(w_n, r_i/2)$ are the limits
of disjoint balls in $M_j$, so
\be
\sum_{n=1}^N \mathcal{H}^m(B(w_n, r_i/4)) \le \limsup_{j\to \infty} \mathcal{H}_m(M_j^{\delta_i})\le V_0.
\ee
So $W_i$ has Hausdorff dimension $m$ and
\be
\mathcal{H}_m(W_i) \le V_0.
\ee
Now
\be
Y=\bigcup_{i=1}^\infty W_i
\ee
so it has Hausdorff dimension $m$ and
\be
\mathcal{H}_m(Y) \le V_0.   
\ee

Now to see that $Y$ has positive density everywhere,
we must show
\be
\Theta_*(y, \mathcal{H}^m) =\liminf_{r\to 0} \frac{\mathcal{H}_m(B(y,r))}{r^m}>0.
\ee
For fixed $i\ge I_y$, we have
\be
\mathcal{H}_m(B(y, r)) \ge 
\mathcal{H}_m(B(y, r)\cap W_i).
\ee
Combining this with (\ref{Bishop-Gromov}) we have
\begin{eqnarray}
\Theta_*(y, \mathcal{H}^m) &=&\liminf_{r\to 0} \frac{\mathcal{H}_m(B(y,r)\cap W_i)}{r^m}\\
&\ge&\liminf_{r\to 0} \frac{\mathcal{H}_m(B(y,r_i)\cap W_i)}{r_i^m}\\
&\ge &\frac{\mathcal{H}_m(B(y,r_i)\cap W_i)}{r_i^m}>0.
\end{eqnarray}
\end{proof}

\begin{question} \label{q-ricci-unique}
Are the glued limit spaces of sequences as in Theorem~\ref{thm-ricci-glued-lim-measure} unique?
\end{question}

\begin{question} \label{q-ricci-rectifiable}
Are the glued limit spaces of sequences as in Theorem~\ref{thm-ricci-glued-lim-measure} countably $\mathcal{H}^m$ rectifiable?
\end{question}

\bibliographystyle{plain}
\bibliography{2012}

\end{document}